\providecommand{\MR}{\relax\ifhmode\unskip\space\fi MR }
\providecommand{\href}[2]{#2}
\newenvironment{altenumerate}
   {\begin{list}
      {\textup{(\theenumi)} }
      {\usecounter{enumi}
       \setlength{\labelwidth}{0pt}
       \setlength{\labelsep}{2pt}
       \setlength{\leftmargin}{0pt}
       \setlength{\itemsep}{\the\smallskipamount}
       \renewcommand{\theenumi}{\roman{enumi}}
      }}
   {\end{list}}
\DeclareSymbolFont{rsfs}{U}{rsfs}{m}{n}
\DeclareSymbolFontAlphabet{\mathscrsfs}{rsfs}
\DeclareMathOperator{\ab}{ab}
\def\Q{\mathbb{Q}}
\def\Z{\mathbb{Z}}
\def\ad{\mathrm{ad}}
\def\int{\mathrm{int}}
\def\isom{\cong}
\def\Ga{\mathbb{G}_{\text{a}}}
\def\Gm{\mathbb{G}_{\text{m}}}
\DeclareMathOperator{\Aut}{Aut}
\DeclareMathOperator{\W}{W}
\numberwithin{equation}{section}
\newtheorem{Theorem}{Theorem}[section]
\numberwithin{Theorem}{section}
\newtheorem{lemma}[Theorem]{Lemma}
\newtheorem{Cor}[Theorem]{Corollary}
\newtheorem{prop}[Theorem]{Proposition}
\newtheorem{defn}[Theorem]{Definition}
\theoremstyle{definition}
\newtheorem{rmk}[Theorem]{Remark}
\newtheorem{example}[Theorem]{Example}
\title{On the symmetric monoidal structure of the Geometric Whittaker Model}
\date{}
\newcommand{\beq}{\begin{equation}}
\newcommand{\eeq}{\end{equation}}
\newcommand{\bthm}{\begin {theorem}}
\newcommand{\ethm}{\end {theorem}}
\newcommand{\bprop}{\begin {proposition}}
\newcommand{\eprop}{\end {proposition}}
\newcommand{\bprob}{\begin {prob}}
\newcommand{\eprob}{\end {prob}}
\newcommand{\bcor}{\begin {corollary}}
\newcommand{\ecor}{\end {corollary}}
\newcommand{\blem}{\begin{lemma}}
\newcommand{\elem}{\end{lemma}}
\newcommand{\bdefn}{\begin{defn}}
\newcommand{\edefn}{\end{defn}}
\newcommand{\bconj}{\begin{conjecture}}
\newcommand{\econj}{\end{conjecture}}
\newcommand{\brk}{\begin{rk}}
\newcommand{\erk}{\end{rk}}
\newcommand{\bpf}{\begin{proof}}
\newcommand{\epf}{\end{proof}}
\newcommand{\bex}{\begin{ex}}
\newcommand{\eex}{\end{ex}}
\newcommand{\bit}{\begin{itemize}}
\newcommand{\eit}{\end{itemize}}
\renewcommand{\subset}{\subseteq}
\newcommand{\D}{\mathscrsfs{D}}
\newcommand{\Dcirc}{\D^\circ}
\newcommand{\into}{\hookrightarrow}
\newcommand{\opname}{\operatorname}
\newcommand{\Perv}{\opname{Perv}}
\newcommand{\id}{\operatorname{id}}
\newcommand{\Id}{\operatorname{Id}}
\newcommand{\Qlcl} {\overline{\mathbb{Q}}_{\ell}}
\renewcommand{\L}{\mathcal{L}}
\newcommand{\Hcal}{\mathcal{H}}
\newcommand{\eL}{e_\L}
\newcommand{\Uop}{{U^{-}}}
\begin{document}
\author{Ashutosh Roy Choudhury and Tanmay Deshpande}

\address{Boston University Department of Mathematics and Statistics, 665 Commonwealth Avenue, Boston,
MA, USA}
\email{ashutosh@bu.edu}
\address{Tata Institute of Fundamental Research, Homi Bhabha Road, Mumbai 400005, INDIA}
\email{tanmay@math.tifr.res.in}
\maketitle
\begin{abstract}
Let $G$ be a connected reductive algebraic group over an algebraically closed field $k$ of characteristic $p > 0$ and let $\ell$ be a prime number different from $p$. Let $U \subseteq G$ be a maximal unipotent subgroup, $T$ a maximal torus normalizing $U$ and $W$ the Weyl group of $G$. Let $\mathcal{L}$ be a non-degenerate multiplicative $\overline{\mathbb{Q}}_{\ell} $-local system
on $U$. In \cite{bd}, the authors show that the bi-Whittaker category, namely the triangulated monoidal category of $(U, \mathcal{L})$-biequivariant $\overline{\mathbb{Q}}_{\ell}$-complexes on $G$ is monoidally equivalent to an explicit thick triangulated monoidal subcategory $\mathscrsfs{D}_{W}^{\circ}(T) \subseteq \mathscrsfs{D}_{W}(T)$  of "central sheaves"
on the torus. In particular it has the structure of a symmetric
monoidal category coming from the symmetric monoidal structure on $\mathscrsfs{D}_W(T)$. 

In this paper, we give another construction of a symmetric monoidal structure on the bi-Whittaker category and prove that it agrees with the one coming from the above construction. For this, among other things, we geometrize a proof in the setting of finite groups of Lie type to the geometric setup.

\end{abstract}
\tableofcontents
\section{Introduction}
Let $G$ be a connected reductive algebraic group over an algebraically closed field $k$ of characteristic $p>0$.  Let us fix a prime number $\ell$ which is invertible in $k$. For a $k$-scheme $X$, let $\mathscrsfs{D}(X)$ denote the $\overline{\mathbb{Q}}_{\ell}$-linear triangulated category of bounded constructible $\overline{\mathbb{Q}}_{\ell}$-complexes on $X$. Following \cite{lo}, if $G$ acts on $X$, we have the equivariant derived category $\D_G(X) := \D(G{\backslash}X)$, the $\Qlcl$-linear triangulated category of $\Qlcl$-complexes on the quotient stack $G{\backslash}X$. Note that we have the forgetful functor $\D_G(X) \to \D(X)$ and by an abuse of notation, we will often use the same symbol to denote an object of $\D_G(X)$ and its image after applying the forgetful functor.

We will use the following notation related to the reductive group $G$. Let $B\subseteq G$ be a Borel subgroup and let $T\subseteq B$ be a maximal torus. Let $U$ be the unipotent radical of $B$ so that $B = TU$. Let $B^{-} = TU^{-}$ be the opposite Borel. Let $\Delta \subseteq \bm{\phi}^{+}\subseteq \bm{\phi} $ denote the set of simple roots, positive roots and the roots respectively, which are determined by the choice of the pair $T\subseteq B$. Let $N = N(T)$ denote the normalizer of the maximal torus $T$ and let $W = N(T)/T$ denote the Weyl group of $G$. We denote the lattice of characters
of $T$ by $X^{\star}(T)$ and the dual lattice of cocharacters of $T$ by $X_{\star}(T)$.

Note that the category $\D(G)$ gets the structure of a monoidal category through convolution with compact support. Let $\mathcal{L}$ be a non-degenerate multiplicative local system on the maximal unipotent $U$. One has the closed idempotents $e_{\mathcal{L}}:= \mathcal{L}[2\text{dim 
 }U](\text{dim }U)$ and $e_{U}:= {\Qlcl}_{|U}[2\text{dim 
 }U](\text{dim }U)$ (refer \cite{bdr}) in $\D(G)$. In \cite{bd}, the authors study the bi-Whittaker category of $G$, namely the monoidal category of $(U,\mathcal{L})$-biequivariant sheaves on $G$, with the monoidal structure being induced by the one on $\mathscrsfs{D}(G)$. It can be identified with the full subcategory $e_{\mathcal{L}}\mathscrsfs{D}(G)e_{\mathcal{L}} \subset \mathscrsfs{D}(G)$ which is itself monoidal with unit object $e_{\L}$. In loc. cit., the authors identify it with a certain full thick triangulated monoidal subcategory $\mathscrsfs{D}^{\circ}_W(T)$ (of $W$-equivariant central sheaves, refer \ref{central}) of the triangulated symmetric monoidal category $\mathscrsfs{D}_W(T)$:

\begin{Theorem} (\cite[Thm. 1.4]{bd})\label{zetathm}
There is a triangulated monoidal equivalence.
\[  \zeta: e_{\mathcal {L}} \mathscrsfs{D}(G) e_{\mathcal{L}} \xrightarrow[]{\cong}
 \mathscrsfs{D}_{W}^{\circ}(T) \]
 whose inverse is given by the composition 
 \[\mathscrsfs{D}^{\circ}_W(T) \xrightarrow{\text{ind}^W}\mathscrsfs{D}_G^{\circ}(G)\xrightarrow{\text{HC}_{\mathcal{L}}} e_{\mathcal {L}} \mathscrsfs{D}(G) e_{\mathcal{L}}\]

\end{Theorem}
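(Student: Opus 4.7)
The plan is to construct $\zeta$ as a Whittaker-type restriction to the torus and then verify that the stated composition $\HC_{\mathcal{L}} \circ \text{ind}^W$ is a two-sided quasi-inverse. For a bi-$(U,\mathcal{L})$-equivariant complex $\mathcal{F}$ on $G$, I would define $\zeta(\mathcal{F})$ via pull-push through the principal Bruhat cell $U \cdot T \cdot U^{-} \hookrightarrow G$ followed by the projection to $T$: the $(U,\mathcal{L})$-biequivariance trivializes the integration over $U \times U^{-}$ and yields a complex on $T$. The Weyl action on $\zeta(\mathcal{F})$ would arise from $N(T)$ permuting the Bruhat strata, and a first nontrivial check is that $\zeta(\mathcal{F})$ lies in the central subcategory $\D_W^\circ(T)$; this would use the non-degeneracy of $\mathcal{L}$ together with the idempotence $e_{\mathcal{L}} * e_{\mathcal{L}} \cong e_{\mathcal{L}}$.

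The inverse is given exactly as stated: $\text{ind}^W$ is Lusztig parabolic induction from $W$-equivariant central sheaves on $T$ to conjugation-equivariant central sheaves on $G$ (pull-push through the Grothendieck--Springer correspondence), while $\HC_{\mathcal{L}}(\mathcal{F}) := e_{\mathcal{L}} * \mathcal{F} * e_{\mathcal{L}}$ lands in $e_{\mathcal{L}}\D(G)e_{\mathcal{L}}$ by construction. To verify $\zeta \circ \HC_{\mathcal{L}} \circ \text{ind}^W \cong \id$ I would use base change and the projection formula in the diagram relating $G$, $T$, and the Bruhat cells: the non-degeneracy of $\mathcal{L}$ should annihilate contributions from all but the principal stratum, which then reproduces the original sheaf on $T$. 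The reverse composition is verified by showing that a bi-$(U,\mathcal{L})$-equivariant sheaf is reconstructed from its Whittaker restriction via induction, which again reduces to a base-change computation on the open Bruhat cell.

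For the monoidal part, I would show $\zeta(\mathcal{F}_1 * \mathcal{F}_2) \cong \zeta(\mathcal{F}_1) * \zeta(\mathcal{F}_2)$ by rewriting the convolution integral on $G$ via the Bruhat decomposition: only the generic cell contributes (again by non-degeneracy of $\mathcal{L}$), and the $U$- and $U^{-}$-integrations collapse by biequivariance to give the convolution of the restrictions on $T$. I expect the main obstacle to lie not in defining $\zeta$ but in the sharp identification of its essential image with $\D_W^\circ(T)$ and in establishing full faithfulness — both require a careful analysis of the centrality condition, likely proceeding by matching endomorphism rings on the two sides together with a Kostant-slice-type argument to realize every central $W$-equivariant sheaf on $T$ as a $\zeta$-image of a bi-Whittaker complex.
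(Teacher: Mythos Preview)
This theorem is not proved in the present paper: it is quoted from \cite{bd} (stated once in the introduction and again in Section~5 ``for the sake of completion''), and the paper only \emph{recalls} the construction of $\zeta$ before using it. So there is no proof here to compare against in the strict sense; what I can compare is your sketch against the construction the paper summarizes from \cite{bd}.

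On the underlying object of $\D(T)$ your description is close to the recalled one: the paper records $\zeta(A) = (e_{U^{-}} \ast A)|_{T}[-4\dim U](-2\dim U)$, obtained by realizing the bimodule category $e_{U^{-}}\D(G)e_{\mathcal{L}}$ as $\D(T)$ via the big cell $U^{-}TU$. Your pull--push through $U\cdot T\cdot U^{-}$ is morally the same.

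The real gap is in how you propose to get the $W$-equivariance. Saying it ``would arise from $N(T)$ permuting the Bruhat strata'' is not the mechanism, and I do not see how to make that literal. In the construction recalled here, the $W$-structure comes from the \emph{left} action of the Yokonuma--Hecke category $e_{U^{-}}\D(G)e_{U^{-}}$ on the bimodule $e_{U^{-}}\D(G)e_{\mathcal{L}}\cong\D(T)$: one passes to a quotient $\mathscr{Y}$ by a thick ideal $\mathscr{I}$ that kills $\D(T)$, and then the Kazhdan--Laumon sheaves $\mathcal{K}_w$ furnish a monoidal functor $\underline{W}\to\mathscr{Y}$ whose action on $\D(T)$ is the adjoint $w$-action. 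The equivariance isomorphisms $\rho^A$ on $\zeta(A)$ are exactly the compatibility of the right $A$-action with this left $\mathcal{K}_w$-action. None of this is visible from a stratum-permutation picture, and this is precisely where the work lies (and where the image lands in $\D_W^{\circ}(T)$ rather than all of $\D_W(T)$). Your closing paragraph correctly flags essential image and full faithfulness as the hard points, but the proposed route to the $W$-structure would not get you there.
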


 (For details on the functors $\text{ind}^{W}$ and $\text{HC}_{\mathcal{L}}$, refer \cite{bd}.)
 
 As a consequence of the above result, one sees that the bi-Whittaker category has the structure of a symmetric monoidal category. This can be thought of as a geometric analogue of the “uniqueness of the
Whittaker model” or the multiplicity-freeness of the Gelfand-Graev representations, which is proved by showing that the endomorphism algebra of the Gelfand-Graev representation is a commutative algebra. The bi-Whittaker monoidal category considered here is the geometric analogue of this endomorphism algebra.

In their proof of these results, the Yokonuma-Hecke category $\mathscrsfs{D}(U^{-} \backslash G/U^{-})\cong e_{U^{-}}\mathscrsfs{D}(G)e_{U^{-}}$ plays an important role. In particular certain perverse sheaves $\mathcal{K}_w \in \mathscrsfs{D}(U^{-}\backslash G/U^{-})$ indexed by $w \in W$ and defined by Kazhdan and Laumon in \cite{kl} are used to construct the $W$-equivariance structure mentioned above.

In this paper, we give a different construction of the symmetric monoidal structure via geometrizing Gelfand's proof of the uniqueness of the Whittaker model. We refer \cite{car} for the proof for finite groups of Lie type which uses Gelfand's trick to show commutativity of the endomorphism algebra of the Gelfand-Graev representation: namely, by constructing an anti-automorphism of this algebra and proving that the anti-automorphism is in fact equal to the identity. In our geometric set-up we construct a  canonical anti-involution $\bm{\Psi}:G\to G$, given the non-degenerate multiplicative local system $\mathcal{L}$ on $U$. This uses the observation that a choice of such an $\mathcal{L}$ determines a pinning on the reductive group, using which we can uniquely construct an anti-involution that will preserve the local system $\mathcal{L}$, along with a few other properties (refer Prop. \ref{psiprop}). 
Since $\bm{\Psi}$ is an anti-involution, the pullback functor $\bm{\Psi}^{\ast}:\D(G)\to \D(G)$ has a canonical anti-monoidal structure. Also since $\bm{\Psi}^{\ast}\L\cong \L$ by construction, the bi-Whittaker full subcategory $\eL\D(G)\eL\subset \D(G)$ is preserved by $\bm{\Psi}^*$. 

The key point will be to construct a natural isomorphism between the anti-monoidal functor $\bm{\Psi}^{\ast}$ and the (monoidal) identity functor $\text{Id}$ on the bi-Whittaker category $\eL\D(G)\eL\subset \D(G)$. Indeed, given such a natural isomorphism $\bm{\Psi}^*\cong \text{Id}$, we can construct a candidate for braiding isomorphisms as follows: For $\mathcal{F},\mathcal{G}\in e_\L\D(G)e_\L$ we obtain a sequence of natural isomorphisms as follows
\beq\label{eq:braidingcandidate}
\mathcal{F}\ast \mathcal{G}\cong \bm{\Psi}^*(\mathcal{F}\ast\mathcal{G})\cong \bm{\Psi}^*(\mathcal{G})\ast \bm{\Psi}^*(\mathcal{F})\cong \mathcal{G}\ast\mathcal{F}.
\eeq
We can now state our main result.
\begin{Theorem}\label{mainthm}(Theorem \ref{nt}, Theorem \ref{zetathm}) (i) The functor $\bm{\Psi}^{\ast}: 
e_{\mathcal{L}}\mathscrsfs{D}(G)e_{\mathcal{L}} \to e_{\mathcal{L}}\mathscrsfs{D}(G)e_{\mathcal{L}}$
admits a natural isomorphism $\theta: \bm{\Psi}^{\ast} \to \text{Id}$ to the identity functor. \\
(ii) The natural isomorphism $\theta$ equips $e_{\mathcal{L}}\mathscrsfs{D}(G)e_{\mathcal{L}}$  with a symmetric monoidal structure defined using Equation \ref{eq:braidingcandidate} above.\\ 
(iii) The above symmetric monoidal structure agrees with the one coming from the equivalence in Theorem \ref{zetathm} from \cite{bd}.
\end{Theorem}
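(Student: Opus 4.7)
The plan is to produce the three parts of the theorem in sequence: construct the natural isomorphism $\theta$, verify that it induces a symmetric monoidal structure, and then compare with the structure transported from $\D_W^\circ(T)$ via $\zeta$. The heart of the argument is geometrizing Gelfand's classical computation showing that every $(U,\chi)$-bi-invariant function $f$ on a finite reductive group satisfies $f\circ \bm{\Psi}=f$, a pointwise statement proved via the Bruhat decomposition together with $\bm{\Psi}|_T=\id$. Geometrically, I would first pin down a canonical isomorphism $\theta_{e_{\mathcal{L}}}\colon \bm{\Psi}^*(e_{\mathcal{L}})\xrightarrow{\sim}e_{\mathcal{L}}$ from the normalization of $\bm{\Psi}^*\mathcal{L}\cong \mathcal{L}$ provided by the pinning (Proposition \ref{psiprop}). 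For general $M\in e_{\mathcal{L}}\D(G)e_{\mathcal{L}}$, I would write $M\cong e_{\mathcal{L}}\ast M\ast e_{\mathcal{L}}$ and invoke the anti-monoidal structure of $\bm{\Psi}^*$ to obtain
\[
\bm{\Psi}^*M\cong \bm{\Psi}^*(e_{\mathcal{L}})\ast \bm{\Psi}^*(M)\ast \bm{\Psi}^*(e_{\mathcal{L}})\xrightarrow{\theta_{e_{\mathcal{L}}}\ast \id\ast \theta_{e_{\mathcal{L}}}} e_{\mathcal{L}}\ast \bm{\Psi}^*(M)\ast e_{\mathcal{L}},
\]
and then apply the geometric Gelfand lemma to identify this last expression canonically with $M$. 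The lemma is the essential new input; its proof should reduce to a Bruhat-cell computation, using $\bm{\Psi}|_T=\id$ together with $(U,\mathcal{L})$-biequivariance to force stratum-by-stratum agreement between $\bm{\Psi}^*M$ (after absorbing the idempotents) and $M$. Naturality of $\theta$ in $M$ is inherited from the naturality of each ingredient.

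Given $\theta$, I would define the commutativity constraint
\[
c_{M,N}\colon M\ast N\xrightarrow{(\theta_M\ast \theta_N)^{-1}} \bm{\Psi}^*(M)\ast \bm{\Psi}^*(N)\xrightarrow{\sim}\bm{\Psi}^*(N\ast M)\xrightarrow{\theta_{N\ast M}} N\ast M,
\]
where the middle map is the anti-monoidal structure morphism of $\bm{\Psi}^*$. The hexagon axioms follow from naturality of $\theta$ plus coherence of the anti-monoidal structure. Symmetry $c_{N,M}\circ c_{M,N}=\id$ reduces to the identity $\theta_M\circ \bm{\Psi}^*(\theta_M)=\id$, a compatibility forced by $\bm{\Psi}^2=\id$ that must be verified as part of the canonicity of $\theta$. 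For comparison with $\zeta$, I would transport $\bm{\Psi}^*$ across the equivalence to a functor on $\D_W^\circ(T)$; since $\bm{\Psi}|_T=\id$, this transported functor is canonically isomorphic to the identity, so the resulting braiding coincides with the restriction of the standard symmetric braiding on $\D_W(T)$. Matching the two braidings on a generating collection such as the images $\zeta(\mathcal{K}_w)$ of the Kazhdan--Laumon sheaves used in \cite{bd} should suffice.

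The main obstacle is the geometric Gelfand lemma. In the finite setting it is a pointwise identity, whereas in our setting it must be lifted to a canonical natural isomorphism of complexes, which requires careful stratum-by-stratum gluing along the Bruhat decomposition and a principled normalization of $\theta_{e_{\mathcal{L}}}$ so that the resulting $\theta$ is both functorial in $M$ and satisfies the involutivity compatibility $\theta\circ \bm{\Psi}^*(\theta)=\id$ needed for \emph{symmetry} rather than merely a braiding. This last point is delicate because the space of automorphisms of $e_{\mathcal{L}}$ inside $\D(G)$ is large, and only one particular normalization, dictated by the pinning coming from $\mathcal{L}$, will produce a symmetric constraint agreeing with the one transported through $\zeta$.
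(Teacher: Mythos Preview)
Your proposal has two genuine errors that would derail the argument.

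\textbf{First, $\bm{\Psi}|_T$ is not the identity.} By construction (Proposition~\ref{psiprop}), $\bm{\Psi}$ acts on $T$ by conjugation by $w_0$. What is true, and what actually drives the cell-by-cell part of the argument, is that $\bm{\Psi}$ fixes pointwise the subvariety $N_{\mathcal{L}}=\bigcup_w N_{w,\mathcal{L}}\subset N$ (Proposition~\ref{support}), which is precisely the locus supporting bi-Whittaker sheaves after intersecting with $N$. Your Bruhat-stratum computation should rest on this, not on $\bm{\Psi}|_T=\id$. The same error contaminates your comparison: transporting $\bm{\Psi}^{\ast}$ across $\zeta$ does \emph{not} give the identity on $\D_W^\circ(T)$; it gives $s_{w_0}$ (Proposition~\ref{zetasq}), and $\theta$ corresponds to the structural isomorphism $\alpha_{w_0}\colon s_{w_0}\to \text{Id}$ coming from the $W$-equivariance data.

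\textbf{Second, the ``geometric Gelfand lemma'' as you state it is the whole theorem, not an input.} Your chain $\bm{\Psi}^{\ast}M\cong e_{\mathcal{L}}\ast \bm{\Psi}^{\ast}M\ast e_{\mathcal{L}}$ is a tautology once $\bm{\Psi}^{\ast}M$ lands in the bi-Whittaker category; no progress has been made toward a canonical identification with $M$. The paper's construction is substantive: on each stratum $BwB$ one has $\theta$ via $\bm{\Psi}|_{N_{w,\mathcal{L}}}=\id$ and the equivalence $e_{\mathcal{L}}\D(BwB)e_{\mathcal{L}}\cong \D(N_{w,\mathcal{L}})$, but the passage to global $\theta$ requires Beilinson's gluing of perverse sheaves, which in turn needs the existence of regular functions on $\overline{BwB}$ cutting out the boundary (Corollary~\ref{functions}). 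The naive open--closed triangle approach fails because the middle arrow is not uniquely determined; Beilinson gluing is what makes the extension functorial and unique. This is the technical heart you are missing.

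Finally, the paper explicitly notes that the hexagon and symmetry axioms do \emph{not} follow formally from an anti-monoidal structure on the identity functor. Rather than verifying them directly or matching on Kazhdan--Laumon sheaves (which live in the Yokonuma--Hecke category, not the bi-Whittaker category, so $\zeta(\mathcal{K}_w)$ does not make sense), the paper proves that $\operatorname{End}(\text{Id})\cong \Qlcl$ on $e_{\mathcal{L}}\D(G)e_{\mathcal{L}}$ (Theorem~\ref{endid}, using the Mellin transform and the objects $\mathcal{F}_\Theta$), and uses this rigidity to force $\theta$ to coincide with $\alpha_{w_0}$ under $\zeta$. The axioms then come for free from the known symmetric monoidal structure on $\D_W(T)$.
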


There are two main difficulties that arise in the geometric and categorical set-up compared to the set-up of finite groups of Lie type. Firstly, in the geometric set-up, it is not sufficient to construct the natural isomorphism $\theta: \bm{\Psi}^{\ast} \to \text{Id}$ on the restriction to the double cosets $BwB\subset G$.

The construction of the natural isomorphism will essentially use the Bruhat decomposition of the reductive group $G$, along with some facts about perverse sheaves and their gluing set up by Beilinson, only slightly adapted to our $(U,\L)$-biequivariant derived category. For this, we need to construct suitable functions on the closures of Bruhat cells to make Beilinson's set up work: 

\begin{prop} (Corollary \ref{functions}) 
 Fix a $w \in W$. There exist regular functions $f_{w}$ on $\overline{BwB}$, such that they vanish with non-zero multiplicity on each Bruhat cell $BvB$ that appears in $\overline{BwB} \backslash BwB = \bigcup_{v < w} BvB$ and are non-vanishing on the open cell $BwB$.   
\end{prop}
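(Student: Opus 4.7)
The plan is to construct $f_w$ explicitly as a matrix coefficient of a Weyl module attached to a regular dominant weight, exploiting the classical orbit-theoretic description of the Bruhat order on $W$.

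Fix a regular dominant weight $\lambda\in X^{\star}(T)$; one may take $\lambda = 2\rho = \sum_{\alpha\in\bm{\phi}^+}\alpha$, which lies in the root lattice (hence in $X^{\star}(T)$) and satisfies $\langle\lambda,\alpha^\vee\rangle=2>0$ for every simple coroot. Let $V=V(\lambda)$ be the Weyl module of $G$ of highest weight $\lambda$; by the Weyl character formula each extremal weight space $V_{w\lambda}$, $w\in W$, is one-dimensional. Pick a highest weight vector $v_\lambda\in V$, let $v_{w\lambda}:=\dot w\cdot v_\lambda$ span $V_{w\lambda}$, and let $v^*_{w\lambda}\in V^*$ be the dual weight vector, pairing to $1$ with $v_{w\lambda}$ and to $0$ with every other weight vector. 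Define
\[
 f_w(g)\;:=\;\langle v^*_{w\lambda},\,g\cdot v_\lambda\rangle,\qquad g\in G,
\]
a matrix coefficient of $V$, hence a regular function on $G$. I claim its restriction to $\overline{BwB}$ has the required properties.

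To verify this, decompose $g=u\,\dot v\,b$ with $u\in U$ and $b\in B$ for $g\in BvB$; then $g\cdot v_\lambda=\lambda(b)\,u\cdot v_{v\lambda}$. Since $U$ is generated by positive root subgroups $U_\alpha$, each of which shifts weights by non-negative multiples of $\alpha$, one has $u\cdot v_{v\lambda}\in v_{v\lambda}+\bigoplus_{\mu>v\lambda}V_\mu$, where $\le$ denotes the partial order on $X^{\star}(T)$ with $\mu\le\mu'$ iff $\mu'-\mu\in\mathbb{Z}_{\ge 0}[\Delta]$. Pairing with $v^*_{w\lambda}$ extracts the $V_{w\lambda}$-component of $u\cdot v_{v\lambda}$, which vanishes identically whenever $w\lambda\not\ge v\lambda$. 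The crucial input is the classical characterization (due to Chevalley) of the Bruhat order: for $\lambda$ regular dominant, $v\le w$ in $W$ iff $v\lambda\ge w\lambda$ in this order on $X^{\star}(T)$. Therefore, for strict $v<w$ one has $w\lambda<v\lambda$ and so $f_w\equiv 0$ on $BvB$; for $v=w$ the coefficient equals $1$, giving $f_w(u\,\dot w\,b)=\lambda(b)\ne 0$ everywhere on $BwB$.

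Restricting $f_w$ to $\overline{BwB}=\bigsqcup_{v\le w}BvB$ thus produces a regular function vanishing on every boundary cell and nonvanishing on the open cell. For each covering relation $v\lessdot w$ the closure $\overline{BvB}$ is an irreducible divisor of $\overline{BwB}$ on which $f_w$ vanishes identically, so its vanishing order along this divisor is a positive integer, as required; for non-covering $v<w$, $\overline{BvB}$ is contained in some $\overline{Bv'B}$ with $v\le v'\lessdot w$, where vanishing has already been established. No serious obstacle arises in this approach; the only nontrivial ingredient beyond elementary manipulations is the orbit-theoretic description of the Bruhat order, which is a standard result.
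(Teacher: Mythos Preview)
Your proof is correct. The matrix coefficient $f_w(g)=\langle v^*_{w\lambda},\,g\cdot v_\lambda\rangle$ is regular on $G$, your weight computation on each $BvB$ is accurate, and the order-reversing property $v\le w\Rightarrow v\lambda\ge w\lambda$ for regular dominant $\lambda$ is the standard fact you cite (for a covering $v\lessdot vs_\beta=w$ one has $v\lambda-w\lambda=\langle\lambda,\beta^\vee\rangle\,v\beta>0$ since $v\beta>0$).

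The paper takes a different route. It works on $G/B$: starting from an ample $G$-linearized line bundle $L$, it uses normality of Schubert varieties and the $B$-action to produce a $B$-eigenvector section of some $L^n|_{X_w}$ vanishing precisely along the boundary divisors, and then pulls this section back along $G\to G/B$ (where $L$ trivializes) to obtain $f_w$. The paper also notes Chevalley's divisor formula for the extremal-weight section $p_{w(\lambda)}$ as an alternative; your $f_w$ is exactly the pullback of that section, so your argument is the explicit realization of this alternative. What you gain is directness: no appeal to normality of Schubert varieties and no abstract existence argument, just a concrete formula and a weight computation. What the paper's abstract argument buys is flexibility: it adapts immediately to the ``more generally'' clause of Corollary~\ref{functions} (a single function cutting out the common boundary of several same-dimensional cells), which is the form actually used later in the gluing induction and which your single-$w$ construction does not address without further work.
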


In this regard, this construction is an instance of the gluing of equivariant perverse sheaves (for the action of unipotent groups). This is in the setup developed by Laszlo and Olsson in their paper on the six-functor formalism on Artin Stacks (\cite{lo}).

The second difficulty in the categorical set-up is as follows: The natural transformation $\theta$ equips the identity functor on $e_{\L}\D(G)e_{\L}$ with an anti-monoidal structure. This gives a candidate for a braiding defined by Equation \ref{eq:braidingcandidate} satisfying the Yang-Baxter relation. In general such a structure need not necessarily satisfy the axioms of a braided monoidal category, namely the hexagonal axioms. To show that we indeed have a braided, and in fact, symmetric monoidal structure, it remains to verify the axioms and to reconcile it with the symmetric monoidal structure coming from the results in \cite{bd}. 

We will in fact prove that the candidate braiding that we construct agrees with the symmetric monoidal braiding that is obtained using the results of \cite{bd}. For this, we will prove the following in Section \ref{symmeq}:  
There is a natural isomorphism making the following diagram commute: 

\[\xymatrix{
 e_\mathcal{L}\mathscrsfs{D}(G) e_{\mathcal{L}}\ar[r]^{\bm{\Psi}^{\ast}} \ar[d]^{\zeta} & e_{\mathcal{L}}\mathscrsfs{D}(G)e_{\mathcal{L}} \ar[d]^{\zeta}\\
 \mathscrsfs{D}_W^{\circ}(T) \ar[r]^{s_{w_0}} & \mathscrsfs{D}_W^{\circ}(T)  .
}\] 
And secondly, the natural transformation $\theta: \bm{\Psi}^{\ast} \to \text{Id}$ goes to the natural transformation $\alpha_{w_0}: s_{w_0}\to \text{Id}$, under the equivalence $\zeta$ (with $\alpha_{w_0}$, $s_{w_0}$ as defined in \ref{natmaps}, roughly, $s_{w_0}$ translates the $W$-equivariant structure by $w_0$ and $\alpha_{w_0}$ is the obvious way of relating the new $W$-equivariant structure with the old one). We also need the following slightly technical result to complete the proof Theorem \ref{mainthm}: 
\begin{Theorem} (Theorem \ref{endid})
All the natural transformations between the identity functors on $e_{\mathcal{L}}\mathscrsfs{D}(G)e_{\mathcal{L}}$ which are induced from the Perverse subcategory $\text{Perv}(e_{\mathcal{L}}\mathscrsfs{D}(G)e_{\mathcal{L}})$ are given by multiplication by a scalar $c\in \overline{\Q_{\ell}}$ and hence the same also holds true for $\mathscrsfs{D}_W^{\circ}(T)$.     
\end{Theorem}
 For this, we have to use some results about the Fourier-Mellin transform on a torus (refer \ref{mellindef}) established by Gabber and Loeser in \cite{GL}.   

\section*{Acknowledgements}

We would like to thank Prof. Najmuddin Fakhruddin, Arnab Roy for useful discussions and Prof. Michel Brion for very helpful exchanges over email. This work was supported by the Department of Atomic Energy, Government
of India, under project no.12-R\&D-TFR-5.01-0500.

\section{Linear Sums of Schubert Varieties as Cartier Divisors}\label{Schubertsection}

In this section, we want to setup the necessary ingredients to perform Beilinson's construction so as to glue perverse sheaves across the strata of the Bruhat decomposition. 

We consider the problem in terms of Schubert varieties, which are $X_w :=  \overline{BwB}/B \subseteq G/B$, and linear sums of Schubert subvarieties that are Cartier divisors. Let $C_w:= BwB/B\subset X_w$. We want to consider line bundles on $X_w$ that pull back to the trivial bundle on $\overline{BwB}$. If we can find sections of line bundles on $X_w$ which vanish only on Schubert subvarieties $X_v$ which are of codimension $1$, then pulling back such sections to $\overline{BwB}$ will give us the kind of functions we want to run a gluing argument. We refer to Michel Brion's lectures (\cite{mb}) for more on Schubert varieties. \\
We first recall the definition of homogeneous line bundles on $X = G/B$ :
\begin{defn}
 Let $\lambda:B \to \Gm$ be a character of $B$. Let $B$ act on the product $G \times \mathbb{A}^1$ by $b(g, t) := (gb^{-1} , \lambda(b)t).$
This action is free, and the quotient
\[L_{\lambda} = G \times^{B} \mathbb{A}^1 := (G \times \mathbb{A}^1)/B\]
maps to $G/B$ via $(g,t)B \to gB$. This makes $L_{\lambda}$ the total space of a line bundle over $G/B$, the {\em homogeneous line bundle associated to the weight} $\lambda$.
\end{defn}

Note that $G$ acts on $L_{\lambda}$ via $g(h, t)B := (gh, t)B$, and that the projection $f : L_{\lambda} \to G/B$
is $G$-equivariant; further, any $g \in G$ induces a linear map from the fiber $f^{-1} (x)$  to $f^{-1}(gx)$. In other words, $L_{\lambda}$ is a $G$-linearized line bundle on $X$.

Recall that the character $\lambda$ must factor as $\lambda:B\to T\to \Gm$ since $B = T U$, and $U$ has no non-trivial characters. So we can think of $\lambda$ as a character of $T$, i.e. $\lambda\in X^\star(T)$.

Schubert varieties are normal. An argument can be found in \cite{mb} Section 2, originally due to Seshadri, \cite{cs}. (For a short proof using Frobenius Splitting of Schubert varieties, see \cite{MS}.) This allows us to talk about Weil Divisors on Schubert Varieties.

\begin{prop}
    The class group of a Schubert variety is generated by the codimension one Schubert cell closures.
\end{prop}

\begin{proof}
    
Because of the Bruhat decomposition, the open cell $C_w$ in a Schubert variety $X_w$ is an affine space of dimension $l(w)$. Therefore, the class group of a Schubert variety is generated by the codimension one Schubert subvarieties. It is in fact freely generated, as, if there is a rational function only having zeroes and poles of certain orders on the respective divisors, it is non-vanishing regular on the biggest affine open, which is not possible.
\end{proof}

One can ask the natural question about the behavior of the zero locus of various sections of these line bundles restricted to $X_w$.

\begin{prop}\label{Beigenvector}
Given a Schubert subvariety $X_w, w \in W$ of $G/B$, there exist an ample $G$-linearized line bundle on $X = G/B$  with section $\sigma$, such that restricting on the Schubert variety $X_w$, we get 
$$\text{div}(\sigma) = \sum_{{l(w^{\prime}) = l(w) -1}} c_{w^{\prime}}X_{w^{\prime}} $$ 
with all $c_{w^{\prime}}>0.$
    
\end{prop}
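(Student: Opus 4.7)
The plan is to construct $\sigma$ as an extremal weight section of an ample homogeneous line bundle on $G/B$ and to verify the divisor formula using Demazure's theorem, reducing the positivity of each coefficient to the non-appearance of a certain weight in a suitable Demazure module.

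Fix a regular dominant $\lambda \in X^{\star}(T)$ so that $L_\lambda$ is ample on $G/B$ (adjusting the sign convention on $\lambda$ if necessary so that this is the case in the convention of the paper). It is classical that $H^0(G/B, L_\lambda)$ contains, for each $w\in W$, a unique (up to scalar) non-zero $T$-eigensection $p_w$ characterised by $p_w(wB/B)\ne 0$; matching $T$-weights against the fiber of $L_\lambda$ over a $T$-fixed point $vB/B$ (a one-dimensional $T$-module of weight $v\lambda$), regularity of $\lambda$ forces $p_w(vB/B)=0$ for every $v\in W$ with $v\ne w$. Set $\sigma:=p_w$. Since $wB/B\in C_w\subset X_w$ and $\sigma(wB/B)\ne 0$, the restriction $\sigma|_{X_w}$ is not identically zero, so it defines a $T$-stable effective Weil divisor on the normal variety $X_w$.

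The key step is to show that $\sigma|_{X_{w'}}\equiv 0$ for every $w'\lessdot w$. By Demazure's theorem, $H^0(X_{w'}, L_\lambda|_{X_{w'}})$ is identified with the dual of the Demazure module $V_{w'}(\lambda)=B\cdot v_{w'\lambda}\subset V(\lambda)$, where $V(\lambda)$ denotes the irreducible $G$-module with extremal weights $\{w\lambda : w\in W\}$. By the standard characterisation of the $T$-weights appearing in $V_{w'}(\lambda)$ (see \cite{mb}), for $\lambda$ regular the extremal weight of $p_w$ appears in $H^0(X_{w'}, L_\lambda|_{X_{w'}})$ only when $w\le w'$ in the Bruhat order. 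Since $w>w'$, this condition fails, and hence $\sigma|_{X_{w'}}=0$.

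Consequently every Schubert divisor $X_{w'}$ with $w'\lessdot w$ lies in the vanishing locus of $\sigma|_{X_w}$ with strictly positive multiplicity. Combined with the classical classification of codimension-one $T$-stable irreducible subvarieties of $X_w$ as precisely these Schubert divisors (see \cite{mb}), this yields
\[ \text{div}(\sigma|_{X_w})=\sum_{w'\lessdot w} c_{w'}\,X_{w'},\qquad c_{w'}>0, \]
as desired. The main potential obstacle is the characterisation of Demazure-module weights via the Bruhat order together with the classification of codimension-one $T$-stable subvarieties of $X_w$; both are classical and appear in Brion's lectures \cite{mb}.
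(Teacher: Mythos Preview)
There is a genuine gap. Your claim that the codimension-one $T$-stable irreducible subvarieties of $X_w$ are exactly the Schubert divisors is false: the open cell $C_w$ is isomorphic to affine space of dimension $l(w)$ with a linear $T$-action whose weights are the roots in $\bm{\phi}^{+}\cap w\bm{\phi}^{-}$, so each coordinate hyperplane is $T$-stable, and its closure in $X_w$ meets $C_w$ and hence cannot be any $X_{w'}$. Since your $\sigma=p_w$ is only a $T$-eigensection, not a $B$-eigensection, the single nonvanishing value $\sigma(wB/B)\ne 0$ does not by itself rule out components of $\text{div}(\sigma|_{X_w})$ lying inside $C_w$. The repair is to show directly that $p_w$ is nowhere zero on $C_w$: for $u\in U$ and $\dot w$ a lift of $w$, the value of $p_w$ at $u\dot w B/B$ is computed by pairing the extremal covector of weight $w\lambda$ against $u\cdot v_{w\lambda}$; since $u\in U$ only moves $v_{w\lambda}$ into strictly higher weight spaces, the $w\lambda$-component is unchanged and the pairing is nonzero. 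With this in place your argument goes through and in fact recovers Chevalley's divisor formula, which the paper cites as an alternative route.

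The paper's own proof is rather different and avoids Demazure modules entirely: it takes any ample $G$-linearized $L$, uses ampleness on the normal variety $X_w$ to find a section of some power $L^n|_{X_w}$ vanishing along every codimension-one Schubert subvariety, and then passes to a $B$-eigenvector $s$ in that $B$-stable subspace of sections. The key point is that $C_w$ is a single $B$-orbit, so a $B$-eigensection nonzero at one point of $C_w$ is nonzero on all of it; this immediately forces $\text{div}(s)$ into the boundary with all coefficients positive. That argument is more elementary (only normality and ampleness are used) but less explicit than yours; your route, once fixed, gives the actual multiplicities.
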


\begin{proof}
    
Let $L$ be an ample $G$-linearized line bundle on $G/B$ (which can be obtained by choosing an appropriate dominant weight). The restriction of $L$ to $X_w$ is an ample $B$-linearized line bundle, as  $X_w\subset G/B$ is $B$-stable. By ampleness, there exists a positive power $L^n$ and a global section in
$H^0(X_w,L^n)$ which vanishes along all the Schubert subvarieties $X_v\subset X_w$ of codimension $1$. More generally, given a subscheme $X \subseteq Y$, and an ample bundle $L$ on $Y$, for $n$ large enough, there are non-zero sections of $L^n$ which vanish on $X$: for an embedding to a projective space given by some $L^n$, $X$ is the vanishing locus of a collection of homogeneous functions. So, we can use the product to have a non zero section of a higher tensor power whose vanishing locus contains $X$.\\ Since $L^n$ is $B$-linearized, $H^0(X_w,L^n)$ is a $B$-module, and so is the subspace of sections vanishing identically along the $X_v$ as above. So this subspace contains a $B$-eigenvector $s$. Now, as $C_w$ is an open $B$-orbit in $X_w$, the section should be entirely non-vanishing on the open $C_w$, as it is non-vanishing at a point in it and is also a $B$-eigenvector. So finally, the divisor of $s$ is a Cartier divisor of the form $\sum_{{l(w^{\prime}) = l(w) -1}} c_{w^{\prime}}X_{w^{\prime}} $ with all $c_{w^{\prime}}>0$. 
\end{proof}
We have therefore accomplished the positivity of the coefficients above without deriving an explicit formula for a general group $G$. \begin{rmk}
    However, there is also Chevalley's formula that we can use (\cite{bla}):
\[ \text{div}(p_{w(\lambda)}|_{X_w}) = \sum \langle \lambda, \hat{\beta} \rangle X_{ws_{\beta}} \]

where the sum is over all $\beta \in \phi^{+}$ such that $X_{ws_{\beta}}$ is a divisor in $X_w$ ($p_{w(\lambda)}$ is a $T$–eigenvector of weight $-w(\lambda)$ in $H^0(G/B,L_{\lambda})$). The above is written for a dominant weight $\lambda$ and thus can also be used for getting positive coefficients for every Schubert divisor in $X_w$.
\end{rmk} 
For our purposes however, we need more general results about sections of line bundles and their vanishing when restricted to a union of Schubert Varieties, without precisely knowing the order of vanishing. Then we pull back those sections across $G \to G/B$ to get functions with analogous properties. This is where we follow the proof of \ref{Beigenvector} instead of using Chevalley's formula:
\begin{Cor}\label{functions}
Fix a $w\in W$ there exists a regular function $f_{w}$ on $\overline{BwB}$, such that it vanishes with non-zero multiplicity on each Bruhat cell $BvB$ that appears in $\overline{BwB} \backslash BwB = \bigcup_{v < w} BvB$ and is nowhere vanishing on the open cell $BwB$. \\
 More generally, we have the following: Consider Weyl group elements $w_i$ of the same length (so that $Bw_iB$ are of the same dimension) then there exists a regular function on $\bigcup_{i}\overline{Bw_iB}$ which vanishes on $\bigcup_{i}\overline{Bw_iB} \backslash \bigcup_{i}B{w_i}B$ and is nowhere vanishing on the open $\bigcup_{i}B{w_i}B$. 
\end{Cor}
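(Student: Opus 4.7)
My plan is to use Proposition \ref{Beigenvector} directly and pull back from the flag variety to the group. More precisely, given $w \in W$, pick a strictly dominant weight $\lambda \in X^{\star}(T)$ large enough that Proposition \ref{Beigenvector} (or, equivalently, Chevalley's formula) yields a $B$-eigenvector section $\sigma \in H^0(X_w, L_{\lambda}^{\otimes n})$ whose divisor on $X_w$ equals $\sum_{l(w')=l(w)-1,\, w'<w} c_{w'} X_{w'}$ with all $c_{w'}>0$ and which is nowhere vanishing on the open cell $C_w$. I then pull $\sigma$ back along the restriction of the projection $\pi\colon G \to G/B$ to $\overline{BwB} \to X_w$. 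Because the homogeneous line bundle $L_\lambda = G \times^B \mathbb{A}^1$ satisfies $\pi^{\ast}L_\lambda \cong \mathcal{O}_G$, the pullback of $\sigma$ is simply a regular function $f_w$ on $\overline{BwB}$ whose zero locus is $\pi^{-1}(\Div \sigma) = \bigcup_{w'}\overline{Bw'B}$, the union ranging over the codimension-one Schubert divisors of $X_w$.

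To promote this vanishing statement from codimension-one strata to every Bruhat cell in the boundary, I will invoke the chain property of the Bruhat order: if $v<w$ then there exists $w'$ with $v \leq w' < w$ and $l(w')=l(w)-1$, whence $BvB \subseteq \overline{Bw'B}$ lies in the zero locus of $f_w$. Nonvanishing of $f_w$ on $BwB$ is immediate from the analogous property of $\sigma$ on $C_w$.

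For the more general statement, apply the first part to each $w_i$ separately to obtain $f_{w_i}$ on $\overline{Bw_iB}$, and glue the $f_{w_i}$'s into a single regular function $F$ on the reduced scheme $Y := \bigcup_i \overline{Bw_iB}$. The gluing reduces to checking that for every pair $i \neq j$ the restrictions of $f_{w_i}$ and $f_{w_j}$ to $\overline{Bw_iB} \cap \overline{Bw_jB}$ agree. This intersection is a closed $B \times B$-stable subvariety of $G$, hence a union of Bruhat cells $BvB$ with $v \leq w_i$ and $v \leq w_j$; since the $w_i$ are distinct elements of a common length, no such $v$ can coincide with any $w_k$, so $v < w_i$ strictly, and both $f_{w_i}$ and $f_{w_j}$ vanish identically on the intersection, giving the required compatibility. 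The same length argument ensures that a cell $Bw_iB$ meets $\overline{Bw_jB}$ only for $j=i$, so $F$ restricts to $f_{w_i}$ on $Bw_iB$ and is nowhere vanishing there, while vanishing on $Y \setminus \bigcup_i Bw_iB$.

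The only nontrivial point of the whole argument is this final gluing step, which works precisely because distinct Schubert closures of the same dimension meet only inside their common boundary; everything else is a routine application of the proposition and of the triviality of pullbacks of homogeneous line bundles to $G$.
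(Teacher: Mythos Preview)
Your argument for the first assertion is correct and essentially identical to the paper's: pull back the $B$-eigenvector section of Proposition~\ref{Beigenvector} along $G\to G/B$, using that a $G$-linearized line bundle trivializes on $G$.

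For the more general statement, however, there is a genuine gap. You assert that ``gluing reduces to checking that for every pair $i\neq j$ the restrictions of $f_{w_i}$ and $f_{w_j}$ to $\overline{Bw_iB}\cap\overline{Bw_jB}$ agree.'' This principle is false for closed covers of a reduced scheme: pairwise agreement guarantees that the putative function is well-defined set-theoretically, not that it is regular. For a concrete obstruction, take three distinct lines through the origin in $\mathbb{A}^2$, say $V(y)$, $V(x)$, $V(x-y)$, with functions $x$, $y$, and $x$ respectively; all three vanish at the origin, yet no element of $k[x,y]/(xy(x-y))$ restricts to this triple. You give no argument that the configuration of Bruhat closures avoids this phenomenon. (It may well be true for unions of Schubert varieties via compatible Frobenius splitting or related results, but that is a nontrivial input you have not invoked.)

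The paper's proof of the general statement takes a different route that sidesteps gluing entirely. Working on $G/B$, for each $i$ one chooses a homogeneous $B$-eigenvector $f_i$ in the graded ideal of the set $Z_i:=\bigl(\bigcup_k X_{w_k}\bigr)\setminus C_{w_i}$, i.e.\ $f_i$ is required to vanish not just on the boundary of $X_{w_i}$ but on all the other $X_{w_j}$ as well. After replacing the $f_i$ by suitable powers so that they have a common degree, one forms the \emph{sum} $f=\sum_i f_i$, which is a single section of one line bundle on the ambient $G/B$: on each open cell $C_{w_i}$ all terms but $f_i$ vanish, while on the union of boundaries every term vanishes. Pulling $f$ back to $G$ gives the required regular function on $\bigcup_i\overline{Bw_iB}$. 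The point you are missing is to arrange all the local pieces as sections of a common bundle on a common ambient space, so that one can add them rather than attempt to glue.
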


\begin{proof}
Since the line bundle $L$ on $G/B$ as constructed above is $G$-linearized, we have that it pulls back to
the trivial bundle across the quotient map $G \to G/B$. So the pull back of the section $\sigma$ from \ref{Beigenvector} is a regular function on $G$.\\
Secondly, $\overline{BwB}$ is the inverse image of $X_w$ across this map, and the section $\sigma$ satisfies:
\[  \text{div}(\sigma) = {\sum}_{l(w^{\prime}) = l(w)-1} c_{w^{\prime}}X_{w^{\prime}}\]
with all $c_{w^{\prime}}>0$. Let’s denote by $f_w$ the regular function restricted to $\overline{BwB}$. By construction, it vanishes
on each Bruhat cell that appears in $\overline{BwB}\backslash BwB$ and is nowhere vanishing on $BwB$.

For the case of multiple Bruhat cells of the same dimension, we do the following:  

Assume that the set of Weyl group elements of a fixed length $k$ is indexed by the set $S: = \{1, \cdots m \}$. For $i \in S$, let $Y_i$ be the boundary of $X_i$ (the complement of the
open Bruhat cell $C_i$). Let $Z_i$ be the union of $Y_i$ and the $X_j$ with $j \neq i$ (the complement of $C_i$ in $\cup_{k \in S } X_{k}$).
Consider the homogeneous ideal of $Z_i$ in $X$ (in the graded ring of sections of a very ample
$G$-linearized line bundle). This is a graded $B$-stable ideal, and hence
contains some $B$-eigenvector $f_i$ which is homogeneous of some degree $n_i$.
The zero set of $f_i$ is exactly $Z_i$ (since $f_i$ vanishes on $Z_i$ and
vanishes nowhere on its complement, a unique $B$-orbit).
Replacing the $f_i$ with suitable powers, we may assume that they all
have the same degree $n$. As $f_i$'s are non-vanishing on $C_i$ and vanish everywhere else, we have that the sum $f$ of the $f_i$ has $\cup_{i \in S} Y_i$ as its zero set.\\
Finally, we pull-back the section across the map $G \to G/B$ to get a function like we wanted.
\end{proof}

\section {Construction of the anti-involution}\label{section antihom}
In this section we construct the anti-involution $\bm{\Psi}:G\to G$. Let us first recall the notion of non-degenerate local systems on $U$ and its relation to the notion of pinnings of reductive groups $G$.

Given any (possibly non-commutative) connected unipotent group $H$, we have a (possibly disconnected) commutative perfect unipotent group $H^{\ast}$, known as the Serre dual of $H$, which is the moduli space of multiplicative local systems on $H$ (see \cite{bo}, A.1, A.12 for details). For the natural map $H \to H^{\ab}$ from the $H$ to its abelianization, we have the induced map $(H^{\ab})^{\ast} \to H^{\ast}$ between the duals through pulling back multiplicative local systems across the abelianization map. We have that this map identifies $(H^{\ab})^{\ast}$ with $(H^{\ast})^0$, the neutral connected component of $H^{\ast}$.

Let us fix a non-trivial additive character ${\psi} : \mathbb{F}_p \to \overline{\mathbb{Q}_{\ell}}^{\times}$. This defines for us the Artin-Schreier local system $\mathcal{L}_{{\psi}}$ on $\Ga$ and gives us an identification of the Serre dual ${\Ga}^{\ast} \cong \Ga$ after passing to the perfectizations (see \cite{bdr} for details).  In particular, the moduli space of multiplicative local systems on $\Ga$ gets identified with the perfectization of $\Ga$ once we fix the non-trivial additive character $\psi$.

\begin{rmk}\label{rk:perfectizations}
Since the Serre dual of a connected unipotent group is only well defined as a perfect unipotent group scheme, we will often implicitly pass to perfectizations of the algebro-geometric objects. Note that passing to the perfectizations does not change the categories of $\Qlcl$-complexes.
\end{rmk}

In this article, we will be interested in non-degenerate multiplicative local systems $\mathcal{L}$ on the maximal unipotent subgroup $U$ of the given reductive group $G$ over $k$. They happen to be coming from pull-back from $U^{\text{ab}}$. (We first pull-pack the local system $\boxtimes_{\alpha \in \Delta} \mathcal{L}_{\alpha}$ via the map $U^{\text{ab}} \to \prod_{ \alpha \in \Delta} U_{\alpha}$; this map is an isomorphism if $p$ is a good prime for $G$.)
\begin{defn}\label{ndmls} (Non-degenerate local systems on the maximal unipotent subgroup of a reductive group.) Let $G$ be a reductive group $G$ over $k$ (algebraically closed of characteristic $p>0$), $U$ a maximal unipotent subgroup and $T$ a maximal torus normalizing $U$. A multiplicative local system $\mathcal{L}$ on $U$ is called non-degenerate if $\mathcal{L}$ is obtained as a  tensor product of pull-backs of non-trivial multiplicative local systems $\mathcal{L}_{\alpha}$ on $U_{\alpha}$, where $U_{\alpha}$ is the simple root subgroup corresponding to $\alpha \in \Delta$. More precisely, for the canonical morphism $U \xrightarrow {\pi} \prod_{\alpha \in \Delta} U_{\alpha }$, we should have: 
$\mathcal{L} \cong \pi^{\star} \left(\boxtimes_{\alpha\in \Delta} \mathcal{L}_{\alpha} \right)$, where each $\L_\alpha$ is non-trivial.
\end{defn}
Note that $T$ acts transitively on the set of all non-degenerate multiplicative local systems on $U$ and that the stabilizer in $T$ of any non-degenerate multiplicative local system on $U$ is the center of the group $Z\subset T$.

Given a reductive group $G$ and a maximal torus $T$, we use the notation 
$$R(G,T):= (X^{\ast}(T), \bm{\phi}(G,T), X_{\ast}(T), \bm{\phi}(G,T)^{\vee})$$ 
for the root datum. If we also fix a Borel subgroup $B$ containing $T$, then in the dual lattices $X^{\ast}(T)$ and $X_{\ast}(T)$ we have the finite subsets
$\Delta \subset \bm{\phi}(G,T) \subset X^{\ast}(T) $(the simple roots inside the roots) and analogously, we have $\Delta^{\vee} \subset \bm{\phi}(G,T)^{\vee}\subset X_{\ast}(T)$. The root datum along with the choice of simple roots and coroots, $(R(G,T), \Delta, \Delta^{\vee})$ is known as a based root datum.

\begin{defn} (Pinnings on Reductive Groups, Definition 1.5.4 in \cite{bc}) A pinning of $(G, T, B)$ is the specification of a $T$-equivariant isomorphism $p_{\alpha}: \Ga \isom U_{\alpha}$ for each $\alpha \in \Delta$, i.e.  $tp_{\alpha}(v)t^{-1} = p_{\alpha}(\alpha(t)v)$ for all $t\in T,v\in \Ga$. The data $(G, T, B, \{p_{\alpha}\})_{\alpha\in \Delta}$ is a pinned connected reductive group. We have the obvious notion of morphisms between pinned connected reductive groups.
\end{defn}     
    
\begin{prop} \label{pinningthm}(Proposition 1.5.5 in \cite{bc})
For pairs of pinned reductive groups, $(G,T,B,\{{p_{\alpha}}\}_{\alpha \in \Delta})$ and $(G^{\prime},T^{\prime},B^{\prime}, \{p_{{\alpha}^{\prime}}\}_{{\alpha}^{\prime}\in \Delta^{\prime}})$, the map
\[\xymatrix{
 \text{Isom}(G,T,B,\{{p_{\alpha}}\}_{\alpha \in \Delta}), (G^{\prime},T^{\prime},B^{\prime}, \{p_{{\alpha}^{\prime}}\}_{{\alpha}^{\prime}\in \Delta^{\prime}}))  \ar[d] \\
 \text{Isom}((R(G,T), \Delta, \Delta^{\vee}), (R(G^{\prime},T^{\prime}), \Delta^{\prime}, {\Delta^{\prime}}^{\vee}))
}\] 
is bijective. In particular, if $f$ is an automorphism of $(G, T, B)$ that is the
identity on $T$ and on the simple positive root groups then $f$ is the identity on
$G$, and a choice of pinning $\{{p_{\alpha}}\}_{\alpha \in \Delta}$ on $(G, T, B)$ defines a homomorphic section to the quotient map $Aut(G) \to  Out(G)$, where the latter group of outer automorphisms can be identified with the group  $\Aut(R(G,T), \Delta, \Delta^{\vee})$ of automorphisms of the based root datum. 
\end{prop}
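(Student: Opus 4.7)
The proposition has two essentially separate contents: an injectivity statement (a pinned automorphism inducing the trivial action on the based root datum is the identity) and a surjectivity statement (every isomorphism of based root data lifts to a pinned isomorphism). I would treat them in that order, since only the first is really a calculation, while the second is the deep existence result and is the main obstacle.

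For injectivity, suppose $f$ is an automorphism of $(G,T,B,\{p_\alpha\})$ that is trivial on $(R(G,T),\Delta,\Delta^\vee)$. Since $f$ acts as the identity on $X^\ast(T)$, its restriction to $T$ is the identity. For each $\alpha \in \Delta$, $f$ preserves $U_\alpha$ and intertwines the $T$-conjugation action through the same character $\alpha$, so $f\circ p_\alpha: \mathbb{G}_a \to U_\alpha$ is a $T$-equivariant isomorphism; the pinning hypothesis forces $f\circ p_\alpha = p_\alpha$, hence $f|_{U_\alpha}=\id$. To control the opposite simple root subgroups, I would use that the pair $(p_\alpha,\alpha^\vee)$ extends uniquely to a homomorphism $\varphi_\alpha: \SL_2 \to G$ with image the rank-one subgroup $G_\alpha = \langle T, U_\alpha, U_{-\alpha}\rangle$, and that this determines a pinning $p_{-\alpha}$ of $U_{-\alpha}$ via $p_{-\alpha}(x) = \varphi_\alpha\bigl(\begin{smallmatrix}1 & 0 \\ x & 1\end{smallmatrix}\bigr)$. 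Because $f$ is a group homomorphism fixing $T$ and $U_\alpha$ pointwise and preserving $G_\alpha$, it must fix $\varphi_\alpha$ and hence $U_{-\alpha}$ pointwise. A standard generation statement for reductive groups then gives that $T$ together with $\{U_\alpha, U_{-\alpha} : \alpha\in\Delta\}$ generates $G$ as an algebraic group, so $f=\id$.

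For surjectivity I would invoke the Existence/Isogeny Theorem of Chevalley–Demazure: given $\psi: (R(G,T),\Delta,\Delta^\vee)\xrightarrow{\cong}(R(G',T'),\Delta',\Delta'^\vee)$, define $f|_T$ via $\psi$ on cocharacter lattices, extend uniquely to each rank-one subgroup $G_\alpha$ using the matched pinnings $p_\alpha$ and $p'_{\psi(\alpha)}$, and assemble a global map by appealing to Steinberg's presentation of $G$ by Chevalley generators subject to the commutator relations read off from the root datum. This is where the real work sits; in practice one cites the classification rather than reproducing its proof. The two ``in particular'' assertions then follow formally: if $f\in\Aut(G)$ is the identity on $T$ and on each $U_\alpha$ for $\alpha\in\Delta$ then it preserves the pinning and is trivial on the based root datum, so $f=\id$ by injectivity; and since $T$-conjugation acts trivially on $(R(G,T),\Delta,\Delta^\vee)$, the map $\Aut(G,T,B,\{p_\alpha\})\to \Aut(R(G,T),\Delta,\Delta^\vee)$ factors through $\Out(G)$, and the bijection just established provides a set-theoretic inverse to this factorization, which is a group homomorphism because composition of pinned isomorphisms realises composition of based root datum isomorphisms.
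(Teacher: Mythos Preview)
The paper does not prove this proposition at all; it is stated with an explicit citation to \cite{bc} (Proposition 1.5.5 there) and used as a black box in the construction of $\bm\tau$ and $\bm\Psi$. So there is no ``paper's own proof'' to compare against.

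Your sketch is a correct outline of the standard argument. One small point worth tightening: in the injectivity step you assert that $f$ fixing $T$ and $U_\alpha$ pointwise forces $f\circ\varphi_\alpha=\varphi_\alpha$. This is true, but the reason is not that $\SL_2$ is generated by its diagonal torus and upper unipotent (it is not); rather, one uses that any automorphism of the rank-one group $G_\alpha$ fixing its maximal torus pointwise is conjugation by a torus element $t$, and if it also fixes $U_\alpha$ pointwise then $\alpha(t)=1$, so $t$ is central in $G_\alpha$ and the automorphism is trivial. Equivalently, one can invoke the standard uniqueness of the extension of $(p_\alpha,\alpha^\vee)$ to $\varphi_\alpha$. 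Either way the conclusion $f|_{U_{-\alpha}}=\id$ follows, and then generation finishes injectivity. Your treatment of surjectivity---reducing to the Existence/Isogeny Theorem---is exactly what one does in practice.
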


We also state the following lemma which explains that for a connected reductive group $G$ over $k$ with a Borel pair $T\subset B$, {the choice of a non-degenerate local system on the maximal unipotent $U$ is equivalent to a choice of a pinning on the reductive group}. Fixing a non-trivial character $\psi : \mathbb{Z}/ p\mathbb{Z} \to \overline{\mathbb{Q}}_{\ell} ^{\times}$,  we get the Artin-Schrier local system $\mathcal{L}_{\psi}$ on $\Ga$.

\begin{lemma}\label{pinfromls}
Given a reductive group $G$ with a maximal unipotent $U$, a Borel $B$ and a maximal torus $T$ as in the introduction, the choice of a non-degenerate multiplicative local system $\mathcal{L}$ on $U$ is equivalent to a choice of pinning of $(G,T,B)$.     
\end{lemma}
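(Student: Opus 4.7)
The plan is to exhibit a canonical bijection between pinnings and non-degenerate multiplicative local systems, decomposing both data as a product over $\alpha \in \Delta$ and matching their $\mathbb{G}_m(k)$-torsor structures via the Artin-Schreier sheaf $\mathcal{L}_\psi$.

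First, I will reduce to a per-simple-root statement. A pinning is by definition a collection of $T$-equivariant group isomorphisms $\{p_\alpha \colon \mathbb{G}_a \xrightarrow{\cong} U_\alpha\}_{\alpha \in \Delta}$. On the other hand, by Definition \ref{ndmls}, a non-degenerate $\mathcal{L}$ on $U$ has the form $\pi^{\ast}\!\bigl(\boxtimes_{\alpha \in \Delta} \mathcal{L}_\alpha\bigr)$ for non-trivial multiplicative local systems $\mathcal{L}_\alpha$ on $U_\alpha$. The tuple $\{\mathcal{L}_\alpha\}$ is recovered from $\mathcal{L}$ by restricting along $U_\alpha \hookrightarrow U$, since the composite $U_\alpha \hookrightarrow U \xrightarrow{\pi} \prod_{\alpha'} U_{\alpha'}$ is just the inclusion of the $\alpha$-factor (this sidesteps the fact that $U^{\mathrm{ab}} \to \prod U_\alpha$ may fail to be an isomorphism when $p$ is not good for $G$). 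Hence it suffices, for each $\alpha \in \Delta$ independently, to biject $T$-equivariant isomorphisms $p_\alpha$ with non-trivial multiplicative local systems on $U_\alpha$.

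Second, for each such $\alpha$, I will define the bijection by
\[ p_\alpha \longmapsto \mathcal{L}_\alpha := (p_\alpha^{-1})^{\ast}\mathcal{L}_\psi, \]
which is non-trivial since $\mathcal{L}_\psi$ is. To prove this is bijective, I will check that both sides are $\mathbb{G}_m(k)$-torsors and that the map is equivariant. On the pinning side, $\mathrm{Aut}(\mathbb{G}_a) \cong \mathbb{G}_m(k)$ acts freely and transitively on group scheme isomorphisms $\mathbb{G}_a \xrightarrow{\cong} U_\alpha$ by rescaling the source, and $T$-equivariance adds no constraint: rescaling by $c$ commutes with the $T$-action on $U_\alpha$ by the character $\alpha$ because $\mathrm{Aut}(\mathbb{G}_a)$ is abelian. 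On the local-system side, the Serre dual identification $\mathbb{G}_a^{\ast} \cong \mathbb{G}_a$ arising from $\psi$ (recalled just before Definition \ref{ndmls}) parameterizes non-trivial multiplicative local systems on $\mathbb{G}_a$ freely and transitively by $c \in \mathbb{G}_m(k)$ via $c \mapsto [x \mapsto \mathcal{L}_\psi(cx)]$.

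Third, I will verify equivariance: rescaling $p_\alpha$ by $c$ rescales $p_\alpha^{-1}$ by $c^{-1}$, and hence scales the pulled-back local system by the corresponding factor, matching the two $\mathbb{G}_m(k)$-actions up to an inessential convention. Taking the product over $\alpha \in \Delta$ yields the desired bijection. There is no serious obstacle here; the argument is a direct assemblage of the two already-recalled facts, namely the Artin-Schreier parameterization of multiplicative local systems on $\mathbb{G}_a$ and the description of pinnings as a torsor over $\prod_{\alpha \in \Delta} \mathrm{Aut}(\mathbb{G}_a)$. The one point requiring mild care is matching the conventions between the two $\mathbb{G}_m(k)$-actions so that the bijection indeed preserves torsor structures; once this is pinned down, the claim is immediate.
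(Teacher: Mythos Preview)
Your proposal is correct and follows essentially the same approach as the paper: both reduce to individual simple roots $\alpha$, and both use the Artin--Schreier identification $\mathbb{G}_a^{\ast}\cong\mathbb{G}_a$ to match a non-trivial $\mathcal{L}_\alpha$ on $U_\alpha$ with the unique isomorphism $p_\alpha\colon\mathbb{G}_a\to U_\alpha$ satisfying $p_\alpha^{\ast}\mathcal{L}_\alpha\cong\mathcal{L}_\psi$. The only difference is presentational: you phrase the per-$\alpha$ bijection as an equivariant map of $\mathbb{G}_m(k)$-torsors, whereas the paper simply asserts uniqueness of $p_\alpha$ directly; your extra care about recovering $\mathcal{L}_\alpha$ by restriction (to avoid issues with $U^{\mathrm{ab}}\to\prod U_\alpha$ when $p$ is bad) is a nice touch not made explicit in the paper.
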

\begin{proof}
By Definition \ref{ndmls}, we have an isomorphism $\mathcal{L} \cong \pi^{\star} \left(\boxtimes_{\alpha\in \Delta} \mathcal{L}_{\alpha} \right)$, where each $\L_\alpha$ is non-trivial on $U_{\alpha}$ which is the simple root subgroup corresponding to $\alpha \in \Delta$ and $\pi$ is the canonical morphism $U \xrightarrow {\pi} \prod_{\alpha \in \Delta} U_{\alpha }$. We have that the Serre dual of $\Ga$ is identified with $\Ga$ equivariant with the obvious action of $\Gm$. Therefore, as $\mathcal{L}_{\alpha}$ is non-trivial, we get the unique identification $p_{\alpha}: \Ga \to U_{\alpha}$ such that $p_{\alpha}^{\ast}\mathcal{L}_{\alpha}$ is isomorphic to $\mathcal{L}_{\psi}$ as multiplicative local systems on $\Ga$. For all $\alpha \in \Delta$, these identifications are clearly $T$-equivariant giving us the pinning  $(G, T, B, \{p_{\alpha}\})_{\alpha\in \Delta}$. It is now also clear how to manufacture the unique non-degenerate local system from the data of a pinning.   
\end{proof}

\begin{rmk}
The construction above relies on the fact that the local system $\mathcal{L}$ is non-degenerate, in the sense that $\mathcal{L}$ is obtained from individual non-trivial $\mathcal{L}_{\alpha}$ on the simple roots subgroups $U_\alpha$ as in the definition. The condition on a non-degenerate local system also enforces $\mathcal{L}$ to be lying in the connected component of the moduli space $U^*$ of multiplicative local systems on $U$ as mentioned in the introduction to this section. 
\end{rmk}

We now construct a canonical  involution $\tau: G \to G$ given a non-degenerate local system $\L$ on $U$, or equivalently a pinning of $(G,T,B)$:

\begin{lemma}\label{tauconstruction}
With $G$, $B=TU$ as in the lemma above, let $\L$ be a non-degenerate multiplicative local system on $U$. Then there is a unique automorphism $\bm\tau:G \to G$ such that
\begin{altenumerate}
    \item $\bm\tau(T)=T$ and $\bm\tau|_T= -\ad(w_0):T\to T$, where $w_0\in W$ is the longest element, i.e. for all $t\in T$ we have $\bm\tau(t)=\ad(w_0)(t^{-1})$.
    \item $\bm\tau(U)=U$, and hence $\bm\tau(U^{-})=\Uop$.
    \item $\bm\tau^*\L\cong \L^{-1}$, the non-degenerate multiplicative local system inverse, or dual to $\L$.
\end{altenumerate}
Moreover, the $\bm\tau$ as above is an involution, i.e. satisfies $\bm\tau^2=\id_G$.
\end{lemma}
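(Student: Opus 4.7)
The plan is to realize $\bm\tau$ as a composition $\bm\tau_0\circ\ad(t_0)$, where $\bm\tau_0$ comes from Proposition \ref{pinningthm} applied to the pinning determined by $\L$ (Lemma \ref{pinfromls}), and $t_0\in T$ is a carefully chosen element needed to invert the sign of $\L$ under pullback. The starting observation is that $\sigma\colon X^*(T)\to X^*(T)$, $\chi\mapsto -w_0(\chi)$, is an automorphism of the based root datum $(R(G,T),\Delta,\Delta^\vee)$: since $w_0$ sends $\Delta$ into $-\Delta$, the map $\sigma$ permutes $\Delta$ and, dually, $\Delta^\vee$. Proposition \ref{pinningthm} then provides a unique pinning-preserving automorphism $\bm\tau_0$ of $(G,T,B,\{p_\alpha\})$ realizing $\sigma$. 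Translating the identity $\bm\tau_0^*\chi=\sigma(\chi)$ into group coordinates yields $\bm\tau_0(t)=\ad(w_0)(t^{-1})$ for $t\in T$, and since $\bm\tau_0$ preserves $B$ it also preserves $U$ (and $U^-$).

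Because $\bm\tau_0\circ p_\alpha=p_{\sigma(\alpha)}$, one gets $(p_\alpha)^*\bm\tau_0^*\L=p_{\sigma(\alpha)}^*\L=\L_\psi=(p_\alpha)^*\L$ for every $\alpha\in\Delta$, and non-degeneracy of $\L$ forces $\bm\tau_0^*\L\cong\L$ rather than $\L^{-1}$. To correct this I would pick $t_0\in T$ with $\alpha(t_0)=-1$ for every $\alpha\in\Delta$; such an element exists over the algebraically closed $k$ because the morphism $T\to\mathbb G_m^\Delta$, $t\mapsto(\alpha(t))_\alpha$, is surjective (its pullback on characters sends the standard basis of $\mathbb Z^\Delta$ to the linearly independent simple roots). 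Setting $\bm\tau:=\bm\tau_0\circ\ad(t_0)$, the new map agrees with $\bm\tau_0$ on $T$ (since $\ad(t_0)$ fixes $T$ pointwise), preserves $U$, and acts on each $U_\alpha$ by $p_\alpha(x)\mapsto p_{\sigma(\alpha)}(\alpha(t_0)\,x)=p_{\sigma(\alpha)}(-x)$; pulling $\L$ back through this formula yields $\bm\tau^*\L\cong\L^{-1}$, and conditions (i)--(iii) all hold.

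For uniqueness, if $\bm\tau'$ is a second such automorphism then $f:=\bm\tau^{-1}\bm\tau'$ fixes $T$ pointwise, preserves $U$, and satisfies $f^*\L\cong\L$; $T$-equivariance forces $f$ to preserve each weight space $U_\alpha$ and act there by a scalar, while $f^*\L\cong\L$ pins that scalar to $1$ on the simple root groups, so Proposition \ref{pinningthm} gives $f=\id_G$. For the involution property, $\bm\tau_0^2$ realizes $\sigma^2=\id$ on the based root datum and preserves the pinning, so $\bm\tau_0^2=\id_G$ by the same uniqueness. A direct expansion using $\bm\tau_0^2=\id$ gives $\bm\tau^2=\ad(s)$ with $s:=\bm\tau_0(t_0)\cdot t_0\in T$, and the computation $\alpha(s)=\alpha(\bm\tau_0(t_0))\cdot\alpha(t_0)=\sigma(\alpha)(t_0)\cdot\alpha(t_0)=(-1)(-1)=1$ for every $\alpha\in\Delta$ places $s$ in $Z(G)=\bigcap_{\alpha\in\Delta}\ker\alpha$, so $\ad(s)=\id_G$ and $\bm\tau^2=\id_G$. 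The main subtlety is exactly this cancellation: the twist $\ad(t_0)$ needed for condition (iii) is compensated upon squaring $\bm\tau$ by the fact that the resulting inner element $s$ lies in the center, which is what forces honest involutivity.
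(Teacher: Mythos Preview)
Your argument is correct, and its core is the same as the paper's: both rely on Lemma~\ref{pinfromls} to translate the data of $\L$ into a pinning and then invoke Proposition~\ref{pinningthm} together with the based-root-datum automorphism $\sigma=-w_0$. The difference is only in packaging. The paper applies Proposition~\ref{pinningthm} once, but with \emph{two} pinnings: the source pinning $\{p_\alpha\}$ coming from $\L$ and the target pinning $\{q_\alpha\}$ coming from $\L^{-1}$ (concretely $q_\alpha=p_\alpha\circ[-1]$). This produces $\bm\tau$ directly, with condition~(iii) built in; involutivity is then immediate because $\bm\tau^2$ preserves the $\L$-pinning and induces $\sigma^2=\id$ on root data. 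You instead keep the same pinning on both sides to get $\bm\tau_0$, and then correct by $\ad(t_0)$ with $\alpha(t_0)=-1$; this forces you to do the extra (but clean) centrality computation $\alpha(\bm\tau_0(t_0)\cdot t_0)=1$ to recover $\bm\tau^2=\id$. The paper's route is a touch slicker, while yours makes the role of the sign twist completely explicit; the two constructions visibly agree since your $\bm\tau$ satisfies $\bm\tau\circ p_\alpha=q_{\sigma(\alpha)}$, which is exactly what Proposition~\ref{pinningthm} characterizes uniquely.
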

\begin{proof}
For $w_0$, the longest element in the Weyl group, we consider the automorphism of root data induced by $-w_0:X^*(T)\to X^*(T)$, $\alpha \rightarrow -w_0(\alpha)$. This sends the positive roots to positive roots, the negative roots to the negative roots (as the longest element in the Weyl group flips them around) and hence also permutes the set of simple roots and is an automorphism of the based root datum. Note that the inverse multiplicative local system $\L^{-1}$ on $U$ is also non-degenerate and hence by Lemma \ref{pinfromls} determines another pinning on $(G,T,B)$. 
Hence by Proposition \ref{pinningthm} we get the unique $\bm\tau:(G,T,B)\to (G,T,B)$ as desired. It also clear by Proposition \ref{pinningthm} that $\bm\tau$ is an involution.
\end{proof}

\begin{prop}\label{psiprop}
    
Given a reductive group $G$, a fixed maximal unipotent subgroup $U$, maximal torus $T$, corresponding opposite unipotent $U^{-}$ and a non-degenerate multiplicative local system $\mathcal{L}$ on $U$, there is a unique anti-homomorphism $\bm{\Psi} : G \rightarrow G$ such that: 
\begin{altenumerate}
    \item $\bm{\Psi}(T) = T$, with $\bm{\Psi}$ acting on $T$ via conjugation by the longest element of the Weyl group, $w_0$.
    \item $\bm{\Psi} (U) = U$, and hence $\bm{\Psi} (U^{-}) = U^{-}$. 
    \item $\bm{\Psi} ^{\star} \mathcal{L} \isom \mathcal{L}$.  
\end{altenumerate}
Moreover, the $\bm\Psi$ as above is an anti-involution, i.e. $\bm{\Psi} ^2 = \text{id}$.
\end{prop}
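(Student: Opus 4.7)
The plan is to deduce $\bm{\Psi}$ directly from the involution $\bm{\tau}$ constructed in Lemma \ref{tauconstruction} by composing with group inversion. Concretely, I would set
\[
\bm{\Psi} \colon G \to G, \qquad \bm{\Psi}(g) := \bm{\tau}(g^{-1}) = \bm{\tau}(g)^{-1},
\]
the second equality being valid because $\bm{\tau}$ is a group homomorphism. Since inversion $\mathrm{inv} \colon G \to G$ is an anti-homomorphism and $\bm{\tau}$ is a homomorphism, $\bm{\Psi} = \bm{\tau} \circ \mathrm{inv}$ is an anti-homomorphism, as required.

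Next I would verify the three numbered properties by translating those of $\bm{\tau}$. For (i), for $t \in T$ we compute $\bm{\Psi}(t) = \bm{\tau}(t^{-1}) = \ad(w_0)((t^{-1})^{-1}) = \ad(w_0)(t)$, which is precisely the action by conjugation by $w_0$. For (ii), since $\bm{\tau}(U)=U$ and inversion preserves $U$, we obtain $\bm{\Psi}(U)=U$, and hence also $\bm{\Psi}(U^-)=U^-$. For (iii), we use the standard fact that for a multiplicative local system $\mathcal{L}$ on an algebraic group, $\mathrm{inv}^* \mathcal{L} \cong \mathcal{L}^{-1}$ (this follows immediately from the multiplicativity isomorphism $m^*\mathcal{L}\cong \mathcal{L}\boxtimes \mathcal{L}$ restricted to the antidiagonal). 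Combining this with $\bm{\tau}^*\mathcal{L} \cong \mathcal{L}^{-1}$ from Lemma \ref{tauconstruction}, we get
\[
\bm{\Psi}^*\mathcal{L} \;=\; \mathrm{inv}^* \bm{\tau}^*\mathcal{L} \;\cong\; \mathrm{inv}^* (\mathcal{L}^{-1}) \;\cong\; (\mathcal{L}^{-1})^{-1} \;\cong\; \mathcal{L}.
\]

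The anti-involution property is a short computation: using that $\bm{\tau}$ is a homomorphism and an involution,
\[
\bm{\Psi}^2(g) \;=\; \bm{\Psi}\bigl(\bm{\tau}(g^{-1})\bigr) \;=\; \bm{\tau}\bigl(\bm{\tau}(g^{-1})^{-1}\bigr) \;=\; \bm{\tau}\bigl(\bm{\tau}(g)\bigr) \;=\; g.
\]
For uniqueness, suppose $\bm{\Psi}'$ is any anti-homomorphism satisfying (i)--(iii). Then the composition $\bm{\tau}' := \bm{\Psi}' \circ \mathrm{inv}$ is a group homomorphism $G \to G$ which, after the same translations as above, satisfies $\bm{\tau}'|_T = -\ad(w_0)$, $\bm{\tau}'(U) = U$, and $(\bm{\tau}')^*\mathcal{L} \cong \mathcal{L}^{-1}$. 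By the uniqueness clause of Lemma \ref{tauconstruction}, $\bm{\tau}' = \bm{\tau}$, and therefore $\bm{\Psi}' = \bm{\tau} \circ \mathrm{inv} = \bm{\Psi}$.

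There is no real obstacle here; the proposition is essentially a formal corollary of Lemma \ref{tauconstruction} obtained by composing with inversion. The only point that deserves care is the identity $\mathrm{inv}^*\mathcal{L}\cong \mathcal{L}^{-1}$ for multiplicative local systems, which ensures that twisting by inversion exactly converts the condition $\bm{\tau}^*\mathcal{L}\cong \mathcal{L}^{-1}$ of Lemma \ref{tauconstruction} into the desired condition $\bm{\Psi}^*\mathcal{L}\cong \mathcal{L}$.
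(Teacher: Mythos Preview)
Your proof is correct and follows essentially the same approach as the paper: define $\bm{\Psi}=\bm{\tau}\circ\iota$ (with $\iota$ the inversion map), deduce the three properties and the anti-involution from those of $\bm{\tau}$, and obtain uniqueness by reducing to the uniqueness clause of Lemma~\ref{tauconstruction}. Your write-up is simply more detailed than the paper's terse version.
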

\begin{proof}
Let $\bm{\Psi} := \bm{\tau} \circ \iota = \iota \circ \bm{\tau}$, where $\iota$ is the inverse map. We have $\bm{\Psi}^{\star} \mathcal{L} \cong  \mathcal{L}$ since $\iota^*(\L^{-1})\cong \L$.\\
The other properties of $\bm{\Psi}$ follow because they are true for $\tau$. By Lemma \ref{tauconstruction}, $\bm\Psi$ is the unique anti-automorphism as desired. It is also clear that it is an anti-involution.
\end{proof}

\begin{rmk}
    The involution $\alpha \to -w_0(\alpha)$ will be identity if $-\Id$ lies in the Weyl group. This is the case for groups of type $B_n, C_n, D_n$ for $n$ even, $G_2, F_4, E_7, E_8$. It is not the identity in the case of $GL_n$.
\end{rmk}

\begin{example}
Consider $G=GL_n$, with $T\subset B=TU$ being the standard maximal torus and Borel subgroup (of upper triangular matrices). In this case we have the projection $U\to U^{\ab}\cong \Ga^{n-1}$ which sends an upper triangular unipotent matrix to its entries above the diagonal. Consider the non-degenerate local system $\mathcal{L}$ on $U$ defined as the pull back of the Artin-Schreier local system $\mathcal{L}_{\psi}$ on $\Ga$ along the composition: $U \to U^{\text{ab}}\isom \Ga^{n-1}\xrightarrow{\Sigma} \Ga$, where $\Sigma$ is the summation map. In this case, the resulting anti-involution $\bm{\Psi}$ on $GL_n$ is given by the operation of transposition along the anti-diagonal. Note that the same anti-involution would work as long as we choose any non-degenerate local system $\L$ on $U$ which is fixed by this anti-involution.
\end{example}

\begin{defn}\label{nwldef}
For $w\in W,$ we have the coset $Tw \subseteq N$ of the normalizer $N$ of the torus $T$. Let $N_{w,\mathcal{L}} \subseteq Tw$ be the (possibly disconnected) reduced closed subscheme (see Remark \ref{rk:closedsubscheme} below) consisting of points $n \in Tw$ such that $\mathcal{L}_{U\cap {^w U}}\cong {^n\mathcal{L}}_{U\cap {^w U}}$, where $^n\mathcal{L}$ is the multiplicative local system on ${}^nU= { }^wU$ obtained by conjugating $\L$ by $n$. Define $N_\L\subset N$ to be the union $\bigcup\limits_{w\in W} N_{w,\L}$.
\end{defn}
\begin{rmk}\label{rk:closedsubscheme}
We have that $N_{w,\mathcal{L}} \subseteq Tw$ is in fact a closed subscheme of $Tw$: Consider the morphism (of perfect schemes after passing to the perfectizations) $m_{w,\mathcal{L}}:Tw \to (U\cap {^w U})^{\ast}$ (the Serre Dual of the unipotent group $U\cap {{^w}U}$) defined as $tw \mapsto {\mathcal{L} \otimes (^{tw}\mathcal{L})^{-1}}_{|U\cup {^w}U}$. Hence we see that $N_{w,\mathcal{L}} \subset Tw$ is the fiber over the identity $1 \hookrightarrow (U\cap {^w}U)^{\ast}$ and therefore, it is a closed perfect subscheme.    
\end{rmk}
\begin{rmk}
We see that the condition for an $n \in Tw$ to lie in $N_{w,\mathcal{L}}$ amounts to getting isomorphic local systems upon restrictions to every $U_{\alpha}$ lying in the intersection $U\cap {^w}U$. For example using this, one sees that: for $w=w_0$, we get $N_{w_0,\mathcal{L}} = Tw_0$ and for $w=e$ (the identity element in $W$), we get $N_{e,\mathcal{L}} = Z$, as $Z = \bigcap\text{ker } \alpha$.      
\end{rmk}

\begin{prop}\label{support}
The anti-involution $\bm{\Psi}$ fixes the subvariety $N_\L\subset G$ pointwise, i.e. $\bm{\Psi} (n) = n, \ \forall n \in N_\L$.
\end{prop}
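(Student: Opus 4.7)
The plan is to exploit the explicit description $\bm{\Psi} = \iota \circ \bm{\tau}$ from Proposition \ref{psiprop}, together with the defining condition of $N_{\mathcal{L}}$. I would proceed in three stages.

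First, I determine the action of $\bm{\Psi}$ on $W = N/T$. Since $\bm{\Psi}$ is an anti-automorphism with $\bm{\Psi}|_T = \text{ad}(w_0)$, applying $\bm{\Psi}(ntn^{-1}) = \bm{\Psi}(n)^{-1}\bm{\Psi}(t)\bm{\Psi}(n)$ to a general $t \in T$ and comparing with the Weyl action of $n$ shows that the induced action of $\bm{\Psi}$ on $W$ is $w \mapsto w_0 w^{-1} w_0$. In particular $\bm{\Psi}(Tw) = T \cdot (w_0 w^{-1} w_0)$, so $\bm{\Psi}$ preserves $N$ setwise.

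Second, using $\bm{\Psi}^{\ast}\mathcal{L} \cong \mathcal{L}$ and the anti-involutive nature of $\bm{\Psi}$, I would transport the condition $\mathcal{L}|_{U \cap {}^{w}U} \cong {}^n \mathcal{L}|_{U \cap {}^{w}U}$ along $\bm{\Psi}$ to conclude $\bm{\Psi}(N_{w,\mathcal{L}}) = N_{w_0 w^{-1} w_0, \mathcal{L}}$. Matching Weyl images then shows that the pointwise-fixing claim can hold only when $w_0 w^{-1} w_0 = w$, reducing the proof to such $w$. The boundary cases are immediate: $N_{e, \mathcal{L}} = Z$ is fixed pointwise since $\bm{\Psi}|_Z = \text{ad}(w_0)|_Z$ is trivial on the center, and $N_{w_0, \mathcal{L}} = Tw_0$ is handled directly by the explicit formulas below.

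Third, and this is the main step, I verify $\bm{\Psi}(n) = n$ pointwise for $n \in N_{w,\mathcal{L}}$ with $w_0 w^{-1} w_0 = w$. Write $n = t \cdot n_w$ where $n_w$ is the Tits lift of $w$ assembled from the pinning-based simple-reflection lifts $n_\alpha = p_\alpha(1) p_{-\alpha}(-1) p_\alpha(1)$. Lemma \ref{tauconstruction} combined with $\bm{\tau}^{\ast}\mathcal{L} \cong \mathcal{L}^{-1}$ and the $T$-equivariance of the pinning forces the explicit formula $\bm{\tau}(p_\alpha(v)) = p_{-w_0\alpha}(-v)$ on simple root subgroups (indeed, $T$-equivariance makes $\bm{\tau}$ a scaling $v \mapsto c_\alpha v$ between the pinnings, and non-degeneracy of $\mathcal{L}$ pins down $c_\alpha = -1$). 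Hence $\bm{\Psi}(p_\alpha(v)) = p_{-w_0\alpha}(v)$, and analogous identities hold on $U_{-\alpha}$. Propagating these through the simple-reflection factors of $n_w$ via the anti-homomorphism property, and combining with $\bm{\Psi}(t) = w_0 t w_0^{-1}$, expresses $\bm{\Psi}(n)$ in the form $t' \cdot n_w$ with $t' \in T$ explicit in terms of $t$ and pinning constants. The defining constraints of $n \in N_{w, \mathcal{L}}$, which pin down $\alpha(t)$ for each simple $\alpha$ with $w\alpha$ also simple via the pinning-compatibility of $\mathcal{L}$ and ${}^n\mathcal{L}$ on $U_{w\alpha}$, are precisely what force $t' = t$, yielding $\bm{\Psi}(n) = n$.

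The main obstacle is the combinatorial bookkeeping of pinning-compatible signs on the Tits lifts for a general $w$: keeping track of how the simple-reflection formula propagates through $n_w$ and matching the output with the torus element $t$ determined by the $N_{w,\mathcal{L}}$-constraints. An induction on $\ell(w)$, combined with the clean permutation of the pinning by $-w_0$, should keep the computation tractable, with the non-degeneracy of $\mathcal{L}$ being precisely what secures the final cancellation.
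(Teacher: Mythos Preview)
Your approach is essentially the geometric adaptation of the Carter/Gelfand argument that the paper cites (the paper's own proof is simply a reference to \cite{car}, Theorem 8.1.3), so the overall strategy is on target. However, there is a genuine logical gap in your second stage.

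You correctly deduce that $\bm{\Psi}(N_{w,\mathcal{L}}) = N_{w_0 w^{-1} w_0,\mathcal{L}}$, and then write: ``the pointwise-fixing claim can hold only when $w_0 w^{-1} w_0 = w$, reducing the proof to such $w$.'' This is circular. What you have established is a \emph{necessary} condition for the proposition to hold: if $\bm{\Psi}$ fixes $N_{\mathcal{L}}$ pointwise and $N_{w,\mathcal{L}} \neq \emptyset$, then indeed $w_0 w^{-1} w_0 = w$. But you cannot use a consequence of the statement you are proving as a reduction step. Your third stage then explicitly assumes $w_0 w^{-1} w_0 = w$, so the case $w_0 w^{-1} w_0 \neq w$ is never handled.

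To close the gap you must independently establish that $N_{w,\mathcal{L}} = \emptyset$ whenever $w_0 w^{-1} w_0 \neq w$, or, better, drop the assumption in stage three and let the symmetry emerge from the computation itself. In Carter's argument this is accomplished by a key structural lemma: the non-degeneracy of $\mathcal{L}$ forces, for each simple root $\alpha$ with $U_\alpha \subset U \cap {}^{w}U$, that $w^{-1}\alpha$ is again simple (otherwise ${}^{n}\mathcal{L}$ is trivial on $U_\alpha$ while $\mathcal{L}$ is not). This constraint on $w$, once fed into the explicit pinning formulas you describe for $\bm{\Psi}$, both yields $w_0 w^{-1} w_0 = w$ and pins down the torus part simultaneously. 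Your stage-three sketch contains the right ingredients, but you should run the computation without presupposing the Weyl symmetry and exhibit it as an output rather than an input.
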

\begin{proof}
  
After setting up all the results from the previous proposition, the above follows from rewriting the proof of theorem 8.1.3 in \cite{car} in our geometric setting. We include the proof here for the benefit of the reader. The proof is  simpler in our geometric setting which is a feature of working with algebraic groups over an algebraically closed field.

The proof is in three main steps: Consider the quotient map $\pi: N \to W$. We first characterize the image $\pi(N_{\mathcal{L}})\subset W$. Secondly, we show that there is a system of coset representatives $n_w \in Tw \subset N$ for each $w \in W$, where $n_w = n_{w^{\ast}}$ (${w^{\ast}}$ being defined as the image of $w$ under $\bm{\Psi}$). Finally, using the first characterisation we shall have that if $n_w \in N_{\mathcal{L}}$, then $w = w^{\ast}$ and hence $\bm{\Psi}(n_w) = n_w $. Using these coset representatives, we shall more generally be able to say that $\bm{\Psi}(n) = n$ for all $n \in N_{\mathcal{L}}$.

Let's first fix an $n \in N_{\mathcal{L}}$ and let $w \in W$ be its image under $\pi$. From the above remarks, we have $e_{\mathcal{L}} \ast \delta_n \ast e_{\mathcal{L}} \neq 0$.
However, $e_{\mathcal{L}} \ast \delta_{n} \ast e_{\mathcal{L}} = \prod_{{\alpha} >0} e_{\mathcal{L}_{\alpha}} \ast \delta_n \ast \prod_{\alpha >0}e_{\mathcal{L}_\alpha} =  \prod_{{\alpha} \neq \beta >0} e_{\mathcal{L}_{\alpha}} \ast e_{\mathcal{L}_{\beta}} \ast \delta_n \ast e_{\mathcal{L}_{\beta}} \ast \prod_{\alpha \neq \beta >0}e_{\mathcal{L}_\alpha} $. So, for $\beta$ a simple root as we have $e_{\mathcal{L}_{\beta}} \ast \delta_n \ast e_{\mathcal{L}_{\beta}} \neq 0  \iff e_{\mathcal{L}_{\beta}} \ast e_{^n\mathcal{L}_{\beta}} \neq 0 $ so if $w(\beta)>0$ then it is also simple ($\mathcal{L}_{\gamma}$ is non-trivial only when $\gamma$ is a simple root).

Therefore, $w$ has the property that if for a positive simple root $\alpha$, $w(\alpha)>0$, then $w(\alpha)$ is also simple. So let $K$ be the set of simple roots of $G$ such that 
$w(\alpha)>0$.     
Consider the element $w(w_0)_{K}$ (with $(w_0)_{K}$ being the longest element in the Weyl subgroup corresponding to the set of simple roots $K$).

Say $\alpha$ is simple and $\alpha \in K$, i.e. $w(\alpha)>0$. Now, $w(w_0)_K (\alpha) = -w(-(w_0)_{K}(\alpha)).$ However, $-(w_0)_{K}(\alpha)$ is an element in $K$, and so $-w(-(w_0)_{K}(\alpha))$ is the negative of an element in $K$ and hence $<0$.\\
If $\alpha$ is simple but $\alpha \notin K$, then $w(w_0)_{K}(\alpha) = w(\alpha + \beta) = w(\alpha) + w(\beta)$ (here $\beta$ is a linear combination of $K$-roots and hence $w(\beta)$ is a linear combination of $w(K)$-roots). However $w(\alpha)$ is negative (by assumption) and not in $w(K)$. Therefore, $w(w_0)_{K}(\alpha)<0$. We have that $w(w_0)_{K}$ turns all positive simple roots to negative roots and so, $w(w_0)_{K} = w_0$.

Moving on to the second step, for any $w' \in W$, let $w'^{\ast}$ denote the image of $w'$ under $\bm{\Psi}$. Our goal now is to find $n_{w'}\in Tw' \subset N$ for each $w'\in W$ such that $\bm{\Psi}(n_{w'}) = n_{{w'}^{\ast}} \forall w' \in W$ satisfying some additional properties as will be stated.\\
We start with simple roots and consider the equivalence relation on simple roots generated by:
\begin{enumerate}[label=(\alph*)]
    \item $\alpha_1$ is equivalent to $\alpha_2$ if there exists $n \in N$ with $nU_{\alpha_1}n^{-1} = U_{\alpha_2}$ (and in this case we can choose $n$ such that $^n\mathcal{L}_{U_{\alpha_1}} = \mathcal{L}_{U_{\alpha_2}}$).
    \item $\alpha_1$ is equivalent to $\alpha_2$ if $\bm{\Psi}(U_{\alpha_1}) = U_{\alpha_2}$
\end{enumerate}
We now choose one root in each equivalence class. For such a simple root $\alpha$, let $G_{\alpha} \subset G$ be the associated reductive subgroup of semisimple rank $1$. Suppose first that $\alpha = \alpha^{\ast}$.  We have $\langle U_{\alpha}, U_{-\alpha} \rangle \subseteq U_{-\alpha}S \cup U_{-\alpha}Sn_{s_{\alpha}}U_{-\alpha} = G_{\alpha}$, where $S = T \cap \langle U_{\alpha}, U_{-\alpha} \rangle$ ($n_{s_{\alpha}}$ a coset representative of the reflection $s_{\alpha}$ corresponding to $\alpha$). Now, as $\bm{\Psi}$ is identity on $U_{\alpha}$ and $U_{-\alpha}$, along with the fact that $U_{\alpha} \subseteq U_{-\alpha}Sn_{s_{\alpha}}U_{-\alpha}$ we get an $x \neq 1 \in U_{-\alpha}Sn_{s_{\alpha}}U_{-\alpha}$ which is fixed by $\bm{\Psi}$. Let $x = x_1 n_{s_{\alpha}} x_2$ ($n_{s_{\alpha}}$ is uniquely determined by $x$ in this way), now $\bm{\Psi}(x) =x \implies \bm{\Psi}(n_{s_{\alpha}}) = n_{s_{\alpha}}$.

Now suppose $\alpha \neq \alpha^{\ast}$.
We proceed similarly as in the $\alpha = \alpha^{\ast}$ case. Consider an element $x \neq 1 \in U_{\alpha}$ and as $\bm{\Psi}(U_{\alpha}) = U_{\alpha^{\ast}}$, let $x^{\ast} = \bm{\Psi}(x) \in U_{{\alpha}^{\ast}}.$ We have the inclusion $U_{\alpha} \subseteq U_{-\alpha}Sn_{s_{\alpha}}U_{-\alpha}$ and its image inclusion $U_{{\alpha}^{\ast}} \subseteq U_{-\alpha^{\ast}}\bm{\Psi}(S)n_{s_{\alpha}^{\ast}}U_{-\alpha^{\ast}}$, where we also have $\bm{\Psi}(S) = T \cap \langle U_{\alpha^{\ast}}, U_{-\alpha^{\ast}} \rangle$.

So following the previous case, $x = x_1n_{s_\alpha}x_2$, we get a corresponding $\bm{\Psi}(x) = \bm{\Psi}(x_2)\bm{\Psi}(n_{s_{\alpha}})\bm{\Psi}(x_1)$ and because of the uniqueness of the decomposition, we get a uniquely determined $n_{s_{\alpha}^{\ast}} = \bm{\Psi}(n_{s_{\alpha}}).$

Now that we have determined $n_{s_{\alpha}}$ for one element in every equivalence class, we can go ahead with defining $n_{\alpha}$ for all $\alpha$: by restricting to a particular equivalence class we can use the relations $(a)$ and $(b)$ to define all the other $n_{s_{\alpha}}'s$ in the natural way. What is crucial here is that the result is independent of the composition maps that we get out of $(a)$ and $(b)$. This is reliant on the fact that $\bm{\Psi}$ and conjugation by $n$ both preserve $\mathcal{L}$ which in turn implies that two different ways of going from $U_{\alpha}$ to $U_{\beta}$ (for two roots $\alpha$ and $\beta$ in an equivalence class) will be the same. \\
We also take note that by virtue of this construction, $nn_{\alpha}n^{-1} = n_{w(\alpha)}$.

Now for $w = s_{\alpha_1}\cdot s_{\alpha_2} \cdots s_{\alpha_k}$ (a minimal expression in terms of simple reflections), define $n_w = n_{s_{{\alpha}_1}}\cdot n_{s_{{\alpha}_2}}\cdots n_{s_{{\alpha}_k}}$. We also define $w^{\ast}$ as $s_{\alpha_{k}}^{\ast} \cdots s_{{\alpha_2}}^{\ast} \cdot s_{{\alpha_1}}^{\ast}.$\\
We have $\bm{\Psi} (n_w) = \bm{\Psi}(n_{s_{{\alpha}_1}}\cdot n_{s_{{\alpha}_2}}\cdots n_{s_{{\alpha}_k}}) = n_{s_{{\alpha}_k}^{\ast}}\cdot n_{s_{{\alpha}_2}^{\ast}}\cdots n_{s_{{\alpha}_1}^{\ast}} = n_{w^{\ast}}$ (This will be independent of the choice of a minimal representation for $w$ as mentioned in \cite{car}).\\
At the same time, $w^{\ast}= {s_{{\alpha_{k}}}^{\ast} \cdots s_{{\alpha_2}}^{\ast} \cdot s_{{\alpha_1}}^{\ast}} = w_{0}s_{{\alpha_{k}}}w_{0}^{-1} \cdots w_{0}s_{\alpha_1}w_{0}^{-1} = w_{0}w^{-1}w_{0}$.\\
We have proved that for $n$ such that $e_{\mathcal{L}} \ast \delta_{n} \ast e_{\mathcal{L}} \neq 0$, with $w = \pi(n)$, we have $w = w_0(w_0)_{K}$ ($K$ being the set of simple roots that stay positive and in fact simple under $w$).\\
So for such a $w$, $w^{\ast} = w_{0}w^{-1}w_{0}^{-1} = w_0 (w_0)_{K} = w $.
Hence $\bm{\Psi}(n_w) = n_{w}$ for such $w's$.
We also have $w_0(w_0)_{K}w_{0} = (w_0)_{L}$ for some set of simple roots $L$.\\
Define $(n_0)_{K}, (n_0)_{L}$ by $(n_0)_{K} = n_{^{w_0}K}, (n_0)_{L} = n_{^{w_0}L}, n_{0} = n_{w_0}$.\\
As $w_0 = w(w_0)_{K} = (w_0)_{L}w$,
we have $n_0 = n_w(n_0)_{K} = (n_0)_{L}n_w.$ 
Thus, $n_w(n_0)_{K}n_w^{-1} = (n_0)_{L}$.\\
The idea now is to compare with $n(n_0)_Kn^{-1}$; let $(w_0)_{K} = s_{\beta_1}\cdots s_{\beta_k} \implies (n_0)_{K} = n_{\beta_1}\cdots n_{\beta_k}.$\\
So we have $n(n_0)_Kn^{-1} = n_{w(\beta_1)}\cdots n_{w(\beta_k)}$. However, $(w_0)_{L} = s_{w(\beta_1)} \cdots s_{w(\beta_k)}$ is a reduced expression for $(w_0)_L$ and so $(n_0)_{L} = n_{w(\beta_1)} \cdots n_{w(\beta_k)}$ (as $nn_{\alpha}n^{-1} = n_{w(\alpha)}$ for a simple root $\alpha$) .\\
So finally, we have that $n(n_0)_{K}n^{-1} = n_{w}(n_0)_{K}n_{w}^{-1}$, and so if $n = n_wt$, for some $t\in T$ ($n$ and $n_w$ are in the same coset), then we have $t(n_0)_{K}t^{-1} = (n_0)_K$.
Hence it follows that $\bm{\Psi}(n) = \bm{\Psi}(n_wt) = \bm{\Psi}(t)\bm{\Psi}(n_w) = n_0tn_0^{-1}n_w = n_w(n_0)_{K}t(n_0)_{K}^{-1} = n_wt = n$.

\end{proof}

\section{Construction of the Symmetric Monoidal Structure using Beilinson's gluing of perverse sheaves}\label{gluingconstr}

Let us first introduce the category of bi-Whittaker sheaves on the group $G$. It will be
convenient to recall some facts and notation from the theory of character sheaves on unipotent groups
developed by Boyarchenko-Drinfeld in \cite{bdr}. Recall that we have a monoidal structure on the category $\mathscrsfs{D}(G)$ which comes from convolution with compact support. For a multiplicative local system $\mathcal{L}$ on any unipotent algebraic group $H$, let $e_\mathcal{L} := \mathcal{L} \otimes \omega_H = \mathcal{L}[2 \text{dim} H](\text{dim} H)$ be the corresponding closed idempotent (see \cite{bdr} for
details) in $\mathscrsfs{D}_H(H)$. If $\mathcal{L}$  is the trivial local system on $H$, we will often denote the corresponding idempotent simply by $e_H \in \mathscrsfs{D}_H(H)$.

\begin{defn}\label{heckedefn}
The bi-Whittaker category is defined to be the triangulated monoidal category $e_{\mathcal{L}}\mathscrsfs{D}(G)e_{\mathcal{L}} \subseteq \mathscrsfs{D}_{U}(G) \subseteq \mathscrsfs{D}(G)$,
i.e. it is the Hecke subcategory corresponding to the closed idempotent $e_{\mathcal{L}}$ in the terminology of \cite{bd}.

In particular, the objects are the objects in $\mathscrsfs{D}(G)$ which are of the form $e_{\mathcal{L}} \ast \mathcal{F} \ast e_{\mathcal{L}}$ and $e_{\mathcal{L}}\mathscrsfs{D}(G)e_{\mathcal{L}}$ is defined to be the full subcategory formed by such objects. The object $e_{\mathcal{L}} \in e_{\mathcal{L}}\mathscrsfs{D}(G)e_{\mathcal{L}}$ is the unit object.
   
\end{defn}

Our main goal in this paper is the construction of a structure of a symmetric monoidal category for $e_{\mathcal{L}}\mathscrsfs{D}(G)e_{\mathcal{L}}$ (following which, we show its canonicity). Let's quickly outline the main ideas before going into the details:

We observe that using the antihomomorphism $\bm{\Psi}$ constructed in the previous section, we have an endofunctor $\bm{\Psi}^{\ast}$ for the category $e_{\mathcal{L}}\mathscrsfs{D}(G)e_{\mathcal{L}}$ (this follows from the properties established in \ref{psiprop}). 

Our first goal will be to construct a canonical natural isomorphism to the identity functor. So, in particular we want $\bm{\Psi}^{\ast} \mathcal{F} \cong \mathcal{F}$ \textbf{functorially}. 

\begin{rmk}\label{flip}
Given the above construction, we have the following natural transformations: $\mathcal{F} \ast \mathcal{G}\cong\bm{\Psi}^{\star}(\mathcal{F} \ast \mathcal{G}) \cong \bm{\Psi}^{\star}({\mathcal{G}}) \ast \bm{\Psi}^{\star}(\mathcal{F})\cong \mathcal{G} \ast \mathcal{F}$ (The second natural isomorphism is due to the fact that $\bm{\Psi}$ is an antihomomorphism, the others will be induced through the natural transformation that we will construct). This composition will induce the symmetric monoidal structure that we want. (The remaining conditions for this will be verified in Section \ref{symmeq}).
\end{rmk}
\begin{lemma}\label{heckesupp}
The support for sheaves in the bi-Whittaker Category $e_{\mathcal{L}}\mathscrsfs{D}(G)e_{\mathcal{L}}$ lies in the subscheme $UN_{\mathcal{L}}U \subset G.$   
\end{lemma}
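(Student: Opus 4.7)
The proof strategy is to combine the $(U, \mathcal{L})$-biequivariance that every object $\mathcal{G} = e_\mathcal{L} \ast \mathcal{F} \ast e_\mathcal{L}$ automatically satisfies with the Bruhat decomposition $G = \bigsqcup_{w \in W} BwB$. By construction of the closed idempotent $e_\mathcal{L}$, the pullback of $\mathcal{G}$ along the two-sided multiplication $U \times G \times U \to G$ is canonically isomorphic to $\mathcal{L} \boxtimes \mathcal{G} \boxtimes \mathcal{L}$, up to the standard dimension shift. Since $\mathcal{L}$ is a local system on $U$ and in particular nowhere vanishing, the set-theoretic support of $\mathcal{G}$ must be invariant under the $U \times U$-action on $G$ given by $(u_1, u_2) \cdot g = u_1 g u_2$.

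By the standard normal form on each Bruhat cell, every $g \in BwB$ has a unique expression $g = u_1 \cdot n \cdot u_2$ with $u_1 \in U$, $n = tw \in Tw$, and $u_2 \in U \cap {}^{w^{-1}}U^{-}$; hence $BwB$ decomposes into a $T$-parameter family of $U \times U$-orbits, one through each $n \in Tw$. A direct computation shows that the stabilizer of such an $n$ under the action above is the graph of the conjugation isomorphism $U \cap {}^w U \to U \cap {}^{w^{-1}} U$, $u \mapsto n^{-1} u^{-1} n$, using that $T$ normalizes $U$ and hence $nUn^{-1} = {}^w U$.

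For $\mathcal{G}$ to carry a nontrivial $(U, \mathcal{L})$-biequivariant structure along the orbit $\mathcal{O}_n := U \cdot n \cdot U$, the biequivariance must be consistent along this stabilizer, which forces the identification $\mathcal{L}|_{U \cap {}^w U} \cong ({}^{n}\mathcal{L})|_{U \cap {}^w U}$ of multiplicative local systems. This is precisely the condition defining $N_{w, \mathcal{L}} \subseteq Tw$ in Definition \ref{nwldef}. Whenever it fails, no nonzero $(U, \mathcal{L})$-biequivariant complex exists on $\mathcal{O}_n$, so $\mathcal{G}|_{\mathcal{O}_n} = 0$. Summing over the Bruhat strata yields $\supp(\mathcal{G}) \subseteq \bigcup_{w \in W} U \cdot N_{w, \mathcal{L}} \cdot U = U N_\mathcal{L} U$, as required.

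The main technical subtlety is to rigorously carry out the orbit-wise vanishing step in the derived category setting. The underlying general fact is that the category of $(U, \mathcal{L})$-biequivariant complexes on the $U \times U$-homogeneous space $\mathcal{O}_n \cong (U \times U)/\Stab_n$ is controlled by descent data for $\mathcal{L} \boxtimes \mathcal{L}$ along the stabilizer, and such descent data exists only when the two multiplicative local systems agree on the stabilizer, i.e.\ exactly when $n \in N_{w, \mathcal{L}}$. I would expect this to follow from the equivariant six-functor formalism of \cite{lo} applied to the action of $U \times U$, with care taken about the shift conventions and about the fact that the constructible derived category only sees the reduced structure on the stabilizer.
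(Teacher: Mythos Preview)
Your proposal is correct and follows essentially the same Mackey-theoretic strategy as the paper: reduce to $U$--$U$ double cosets via Bruhat, and show that the restriction to the orbit through $n\in Tw$ can be nonzero only when $\mathcal{L}|_{U\cap{}^wU}\cong{}^n\mathcal{L}|_{U\cap{}^wU}$. The only difference is packaging: where you compute the stabilizer explicitly and phrase the obstruction as a descent condition for $\mathcal{L}\boxtimes\mathcal{L}$ along $\Stab_n$, the paper invokes the equivalent criterion $e_{\mathcal{L}}\ast\delta_n\ast e_{\mathcal{L}}\neq 0 \iff e_{\mathcal{L}}\ast e_{{}^n\mathcal{L}}\neq 0 \iff \mathcal{L}|_{U\cap{}^nU}\cong{}^n\mathcal{L}|_{U\cap{}^nU}$, citing \cite{td}, Lemma~2.14, for the last equivalence---this is exactly the ``technical subtlety'' you flag at the end, already recorded in the literature, so you need not redevelop it from the six-functor formalism.
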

\begin{proof}
 Note that from the Bruhat decomposition we obtain $G=UNU = \bigcup_{w\in W} U\cdot Tw \cdot U$, where $Tw \subset N$ are the various $T$-cosets in the normalizer $N$ of the maximal torus $T$.\\ 
We therefore have to show that the double cosets in $U\backslash G/U$ which can lie in the support of objects of $e_{\mathcal{L}} \mathscrsfs{D}(G)e_{\mathcal{L}}$ are given by $N_{\mathcal{L}} \subset N$. By a standard argument from Mackey theory, objects of $e_{\mathcal{L}} \mathscrsfs{D}(G)e_{\mathcal{L}}$ can only be supported on those $g \in G$ such that $e_{\mathcal{L}} \ast \delta_g \ast e_{\mathcal{L}} \neq 0$ (refer \cite{bd}, \cite{td}) i.e. those $g$ such that $e_{\mathcal{L}} \ast {^g\mathcal{L}} \neq 0$.

Let us now suppose that $g \in N$ since $N$ represents all the $U - U$-double cosets.

We have that the reduced closed (possibly disconnected) subscheme of $Tw$ formed by $n \in Tw$ such that $e_{\mathcal{L}} \ast \delta_n \ast e_{\mathcal{L}} \cong e_{\mathcal{L}} \ast e_{^n\mathcal{L}}\ast \delta_n \neq 0$ is exactly $N_{w,\mathcal{L}}$ as defined in \ref{nwldef}. This follows from \cite{td}, Lemma 2.14: $e_{\mathcal{L}} \ast e_{^n\mathcal{L}} \neq 0$ is equivalent to $\mathcal{L}_{|U\cap {{^n}U}} \cong {^n}\mathcal{L}_{|U\cap {{^n}U}}$.

Therefore we have that the support of an object in $e_{\mathcal{L}}\mathscrsfs{D}(G)e_{\mathcal{L}}$ must lie in $\bigcup_{w\in W} N_{w,\mathcal{L}} = N_{\mathcal{L}}.$

\end{proof}
\begin{lemma}\label{dictionary}
Let $e_{\mathcal{L}}\mathscrsfs{D}(BwB)e_{\mathcal{L}}$ be the full subcategory formed by objects in $e_{\mathcal{L}}\mathscrsfs{D}(G)e_{\mathcal{L}}$ which have support lying in the Bruhat cell $BwB \subset G$. There is an equivalence of categories: $e_{\mathcal{L}}\mathscrsfs{D}(BwB)e_{\mathcal{L}} \cong \mathscrsfs{D}(N_{w,\mathcal{L}})$.
\end{lemma}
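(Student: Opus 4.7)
The strategy is equivariant descent. Using the Bruhat decomposition, the multiplication map
$$m \;:\; U \times Tw \times U \;\longrightarrow\; BwB, \qquad (u_1, n, u_2)\mapsto u_1 n u_2$$
is surjective, and a direct check shows that it realizes its source as a torsor over $BwB$ under the group $U\cap {}^{w}U$ acting by $v\cdot(u_1,n,u_2) = (u_1 v,\, n,\, n^{-1}v^{-1}n\, u_2)$. Crucially $m$ preserves the $Tw$-coordinate, and the left/right $U$-actions on $BwB$ lift to the obvious left/right $U$-actions on the first/third factors of $U\times Tw\times U$.

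The candidate equivalence $\Phi : e_{\mathcal{L}}\mathscrsfs{D}(BwB)e_{\mathcal{L}}\to \mathscrsfs{D}(N_{w,\mathcal{L}})$ is the suitably shifted $*$-restriction along the locally closed inclusion $N_{w,\mathcal{L}}\subseteq Tw \hookrightarrow BwB$. For $\mathcal{F}$ in the bi-Whittaker subcategory, equivariant descent along the two free $U$-actions identifies $m^{*}\mathcal{F}$ with $e_{\mathcal{L}}\boxtimes \mathcal{F}|_{Tw}\boxtimes e_{\mathcal{L}}$ up to a cohomological shift. For $m^{*}\mathcal{F}$ to descend further along the $(U\cap {}^{w}U)$-torsor $m$, the twist character
$$v \;\longmapsto\; \mathcal{L}(v)\cdot {}^{n}\mathcal{L}(v)^{-1},\qquad v\in U\cap {}^{w}U,$$
on the fiber over $n\in Tw$ must be trivial, and this is exactly the defining condition $n\in N_{w,\mathcal{L}}$. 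Hence $\mathcal{F}|_{Tw}$ is automatically supported on $N_{w,\mathcal{L}}$ and $\Phi$ is well defined. The quasi-inverse $\Psi: \mathscrsfs{D}(N_{w,\mathcal{L}})\to e_{\mathcal{L}}\mathscrsfs{D}(BwB)e_{\mathcal{L}}$ is given by the convolution formula $\Psi(\mathcal{G}) = e_{\mathcal{L}}\ast (i_{!}\mathcal{G})\ast e_{\mathcal{L}}$, where $i:N_{w,\mathcal{L}}\hookrightarrow G$ is the inclusion; concretely, one forms $e_{\mathcal{L}}\boxtimes \mathcal{G}\boxtimes e_{\mathcal{L}}$ on $U\times N_{w,\mathcal{L}}\times U$ and pushes forward along $m$.

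The verification $\Phi\circ\Psi\simeq \id$ proceeds by base change along the closed inclusion $N_{w,\mathcal{L}}\hookrightarrow BwB$ together with the standard identity $e_{\mathcal{L}}\ast \delta_{n}\ast e_{\mathcal{L}}\cong e_{\mathcal{L}}$ (up to shifts) for $n\in N_{w,\mathcal{L}}$, which is essentially Lemma 2.14 of \cite{td} already invoked in the preceding Lemma \ref{heckesupp}. The verification $\Psi\circ\Phi\simeq \id$ is the descent principle articulated above: a $(U,\mathcal{L})$-biequivariant sheaf on $BwB$ is uniquely reconstructed, via the convolution, from its restriction to the transversal slice $N_{w,\mathcal{L}}$. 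The main obstacle is the careful bookkeeping of the cohomological shifts and Tate twists packaged into $e_{\mathcal{L}}=\mathcal{L}[2\dim U](\dim U)$, so that $\Phi$ and $\Psi$ are normalized compatibly and the descent is unobstructed; a secondary concern is that $N_{w,\mathcal{L}}$ need not be connected, but since the descent and the convolution are compatible with restriction to individual components (each of which is a translate of the center $Z\subseteq T$ inside $Tw$), the argument runs componentwise without modification.
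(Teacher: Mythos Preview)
Your proposal is correct and defines exactly the same pair of functors as the paper: the forward direction is $C\mapsto e_{\mathcal{L}}\ast C\ast e_{\mathcal{L}}$ and the inverse is restriction to $N_{w,\mathcal{L}}$ with the shift $[-2\dim U-2\dim U_w](-\dim U-\dim U_w)$, where $U_w:=U/(U\cap{}^{w}U)$. The only real difference is in the underlying geometric picture: the paper uses the \emph{isomorphism} $U\times Tw\times U_w\xrightarrow{\ \sim\ }BwB$ given by multiplication, so no descent is needed and the support condition is imported directly from the preceding Lemma~\ref{heckesupp}; you instead work with the surjection $U\times Tw\times U\to BwB$, identify it as a $(U\cap{}^{w}U)$-torsor, and recover the support constraint $n\in N_{w,\mathcal{L}}$ from the vanishing of the twist character. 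Your route is a bit longer but has the virtue of re-deriving the support condition intrinsically. One small inaccuracy: your parenthetical that each connected component of $N_{w,\mathcal{L}}$ is a translate of the center $Z$ is not correct in general (for instance $N_{w_0,\mathcal{L}}=Tw_0$ is all of $Tw_0$), but this plays no role in the argument.
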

\begin{proof}
We have the equality for the Bruhat cell $BwB = U \times Tw \times U_w$, where $ U_w := U/ {^{w}U} \cap U$ 
(the identification is using the multiplication operation of the group $G$). In light of the previous Lemma, we have the following identifications:\\ 
$\mathscrsfs{D}(N_{w,\mathcal{L}}) \xrightarrow[]{\cong} e_{\mathcal{L}}\mathscrsfs{D}(U\cdot Tw \cdot U)e_{\mathcal{L}} = e_{\mathcal{L}}\mathscrsfs{D}(BwB)e_{\mathcal{L}}$ given for $C \in \mathscrsfs{D}(N_{w,\mathcal{L}})$ as (denoting by $m$ and $n$ the two equivalences)

\[ C \xmapsto{m} e_{\mathcal{L}} \ast C \ast e_{\mathcal{L}}.\]

The inverse functor is given by 
\[ \mathcal{F} \in e_{\mathcal{L}} \mathscrsfs{D}(G)e_{\mathcal{L}} \xmapsto{n} \mathcal{F}_{|N_{w,\mathcal{L}}}[-2\text{dim} U - 2\text{dim} U_w](-\text{dim} U - \text{dim}U_w)\]
\end{proof}
We take the first step towards the construction of a natural transformation as mentioned earlier:
\begin{lemma}\label{bruhatnt}
Fix a $w \in W$. There is a canonical natural transformation $\theta: \bm{\Psi}^{\ast} \to \text{Id}$ on  $e_{\mathcal{L}}\mathscrsfs{D}(BwB)e_{\mathcal{L}}$, the full subcategory of sheaves in $e_{\mathcal{L}}\mathscrsfs{D}(G)e_{\mathcal{L}}$ supported on the Bruhat cell $BwB$.   
\end{lemma}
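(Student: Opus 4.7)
The plan is to transport the entire question along the equivalence of Lemma \ref{dictionary}, where the anti-involution $\bm{\Psi}$ becomes literally the identity thanks to Proposition \ref{support}, and so pullback is canonically trivial.

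First I would check that $\bm{\Psi}^{\ast}$ is indeed an endofunctor of $e_{\mathcal{L}}\mathscrsfs{D}(BwB)e_{\mathcal{L}}$. Stability under the biequivariance condition is clear: $\bm{\Psi}^{\ast}\mathcal{L}\cong \mathcal{L}$ together with the anti-homomorphism property shows $\bm{\Psi}^{\ast}(e_{\mathcal{L}}\ast \mathcal{G}\ast e_{\mathcal{L}})\cong e_{\mathcal{L}}\ast \bm{\Psi}^{\ast}\mathcal{G}\ast e_{\mathcal{L}}$, so the subcategory $e_{\mathcal{L}}\mathscrsfs{D}(G)e_{\mathcal{L}}$ is preserved. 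For the support, if $N_{w,\mathcal{L}}$ is empty then by Lemma \ref{heckesupp} the whole category $e_{\mathcal{L}}\mathscrsfs{D}(BwB)e_{\mathcal{L}}$ is zero and there is nothing to do; otherwise, pick any $n\in N_{w,\mathcal{L}}\subseteq Tw$ and apply Proposition \ref{support} to get $\bm{\Psi}(n)=n\in Tw$, forcing $\bm{\Psi}(w)\in Tw$ and hence $\bm{\Psi}(BwB)=B\bm{\Psi}(w)B=BwB$. Thus $\bm{\Psi}^{\ast}$ does preserve support on $BwB$.

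Next, I would use the equivalence
\[ \mathscrsfs{D}(N_{w,\mathcal{L}}) \xrightarrow[]{\ \cong\ } e_{\mathcal{L}}\mathscrsfs{D}(BwB)e_{\mathcal{L}}, \qquad C\longmapsto e_{\mathcal{L}}\ast C\ast e_{\mathcal{L}}, \]
of Lemma \ref{dictionary}, whose quasi-inverse is (shifted) restriction along $N_{w,\mathcal{L}}\hookrightarrow G$. The assertion reduces to producing a canonical natural isomorphism between $\bm{\Psi}^{\ast}$ transported to $\mathscrsfs{D}(N_{w,\mathcal{L}})$ and the identity functor. But $\bm{\Psi}$ restricts to the identity morphism on $N_{w,\mathcal{L}}$ by Proposition \ref{support}, so by proper base change for the inclusion $N_{w,\mathcal{L}}\hookrightarrow G$ (whose image lies in the fixed locus of $\bm{\Psi}$), one has a canonical isomorphism $(\bm{\Psi}^{\ast}\mathcal{F})|_{N_{w,\mathcal{L}}}\cong \mathcal{F}|_{N_{w,\mathcal{L}}}$ for every $\mathcal{F}\in e_{\mathcal{L}}\mathscrsfs{D}(BwB)e_{\mathcal{L}}$, which is natural in $\mathcal{F}$ and compatible with the shift and Tate twist. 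Transporting this isomorphism back through the equivalence produces the desired $\theta:\bm{\Psi}^{\ast}\xrightarrow{\sim}\mathrm{Id}$.

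The only subtle point I expect is bookkeeping: verifying that the passage back and forth through Lemma \ref{dictionary} is compatible with $\bm{\Psi}^{\ast}$ in the way described, i.e.\ that applying $\bm{\Psi}^{\ast}$ commutes (up to canonical natural isomorphism) with the functors $C\mapsto e_{\mathcal{L}}\ast C\ast e_{\mathcal{L}}$ and $\mathcal{F}\mapsto \mathcal{F}|_{N_{w,\mathcal{L}}}[\,\cdot\,](\,\cdot\,)$. The first uses $\bm{\Psi}^{\ast}e_{\mathcal{L}}\cong e_{\mathcal{L}}$ and the anti-monoidal structure on $\bm{\Psi}^{\ast}$ (with the bilateral symmetry of $e_{\mathcal{L}}\ast(-)\ast e_{\mathcal{L}}$ absorbing the order-reversal); the second is just base change along the closed embedding $N_{w,\mathcal{L}}\hookrightarrow G$, together with the pointwise equality $\bm{\Psi}|_{N_{w,\mathcal{L}}}=\mathrm{id}$. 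Once these compatibilities are in place, $\theta$ is forced on us as the image under the equivalence of the identity natural transformation $\mathrm{Id}_{\mathscrsfs{D}(N_{w,\mathcal{L}})}\xrightarrow{=}\mathrm{Id}_{\mathscrsfs{D}(N_{w,\mathcal{L}})}$, and its canonicity is manifest.
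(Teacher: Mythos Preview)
Your proposal is correct and follows essentially the same approach as the paper: transport the question through the equivalence of Lemma \ref{dictionary} to $\mathscrsfs{D}(N_{w,\mathcal{L}})$, where $\bm{\Psi}$ acts as the identity by Proposition \ref{support}, and take $\theta$ to be the identity natural transformation there. Your write-up is more detailed (explicitly checking that $\bm{\Psi}^{\ast}$ preserves the subcategory and spelling out the bookkeeping compatibilities), but the underlying argument is identical to the paper's.
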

\begin{proof}
From Proposition \ref{support}, we have that $\bm{\Psi}$ acts as identity on $N_{w,\mathcal{L}}$. In addition, $\bm{\Psi}^{\ast}$ is an anti-monoidal functor satisfying $\bm{\Psi}^{\ast} e_{\mathcal{L}} = e_{\mathcal{L}}$. In view of the equivalence in the Lemma above, it is enough to describe a natural transformation between the induced functors on $\mathscrsfs{D}(N_{w,\mathcal{L}})$. As $\bm{\Psi}^{\ast}$ is the identity functor there, we use the canonical identity map between the identity functors.    
\end{proof}
What remains now is to \textbf{functorially} upgrade this natural isomorphism for general objects in $e_{\mathcal{L}}\mathscrsfs{D}(G)e_{\mathcal{L}}$.
We want to use what we already have for sheaves restricted to a single Bruhat cell along with the Bruhat decomposition: noticing that Bruhat cells are themselves strata in this decomposition.\\
Let us first discuss the obvious approach towards solving this problem and see why this fails:
We can, for example consider a sheaf $\mathcal{F}$  in our bi-equivariant category with support intersecting the biggest Bruhat cell. We have the open-closed triangle (for $V = Bw_0B$, $Z = G\backslash Bw_0B = \cup_{w \neq w_0} BwB$ and $j$ and $i$ denoting the open and closed inclusions respectively):

\[\xymatrix{
 j_{!}j^{!}\mathcal{F} \ar[r] \ar[d]^{\cong} & \mathcal{F} \ar[r]\ar[d]^{\exists !?} & i_{\ast}i^{\ast}\mathcal{F} \ar[r]^{+} \ar[d]^{\cong} & j_{!}j^{!}\mathcal{F}[1]\ar[d]^{\cong} \\
 j_{!}j^{!}(\bm{\Psi}^{\ast}\mathcal{F}) \ar[r] & \bm{\Psi^{\ast}}\mathcal{F}\ar[r] & i_{\ast}i^{\ast}(\bm{\Psi}^{\ast}\mathcal{F}) \ar[r]^{+} & j_{!}j^{!}(\bm{\Psi}^{\ast}\mathcal{F})[1]  .
}\]

The difficulty here is that, even if via some induction argument we get $\theta$ for the closed complement with the square on the right commuting, giving us an isomorphism between $\mathcal{F}$ and $\bm{\Psi}^{\ast}\mathcal{F}$: this isomorphism is \textbf{not uniquely determined}.

Instead, there is a workaround using perverse sheaves and the perverse t-structure which is a Verdier self-dual t-structure on the derived category $\D(G)$. We first observe that the natural isomorphisms already set up for objects supported on a single Bruhat cell restrict to isomorphisms for the perverse sheaves in the category. The key point will be Beilinson's gluing of perverse sheaves across the Bruhat strata, which will functorially glue these isomorphisms as well.

The induction will therefore proceed on the dimension of the Bruhat cells that intersect the support of $\mathcal{F}$: Take $U = BwB, X = \overline{BwB}, Z = \overline{BwB} \backslash BwB$ (more precisely, we will consider a union of Bruhat closures of same dimension), we have from \ref{functions} that this can be put in Beilinson's Gluing setup. Notice that in this gluing procedure, although we are making a choice of a function $f_w$ on $\overline{BwB}$ for every $w \in W$, it will be clear that one still ends up with the same natural isomorphisms. This process of inductive gluing uniquely upgrades the natural isomorphisms already described for sheaves supported on a single cell. 

We shall follow the expositions in \cite{rr} and \cite{sm}. \cite{rr} deals with the setup over the complex numbers. Morel's exposition in \cite{sm} deals with the case over a positive characteristic base. The theorems as we shall recall them will be mainly from Reich's notes but with the added Tate twists that appear in the $\ell$-adic setup. We start with first briefly recalling the definition of the (unipotent) nearby cycles and the vanishing cycles functor. Following which, we mention the equivalence of categories that showcases gluing.
\subsection{Beilinson's gluing of Perverse Sheaves}

Consider the following setup where we have an open-closed decomposition $U \hookrightarrow X \hookleftarrow Z$ of a variety $X$ with respect to a regular function $f$ (realised as the non-vanishing and vanishing locus of $f$).
\[\xymatrix{
 V \ar[r] \ar[d] & X \ar[d]^{f} & Z \ar[l] \ar[d] \\
 \Gm \ar[r] & \mathbb{A}^{1} & 0 \ar[l] .
}\] 

In this section, we won't go over all the details of the functors related to Beilinson's gluing construction but only recall some of their important properties. First, fix a topological generator $T$ of the prime-to-$p$ quotient of $\pi_1^{\text{geom}} ({\Gm}_k , 1)$ (where $p = char(k)$). And, let $t: \pi_1^{\text{geom}} ({\Gm}_k , 1) \to \Z_\ell(1)$ be the usual surjective map in the $\ell$-adic case.
The unipotent nearby cycles functor $\Psi_f^{un}: Perv(V) \to Perv(Z)$  (to be defined in \ref{nearbydef}) is a functor from the category of perverse sheaves on the open $V$ to the category of perverse sheaves on $Z$.
\begin{rmk}\label{N}
The nearby cycles functor $\Psi_f$ admits an action of $\pi_1^{\text{geom}} ({\Gm}_k , 1)$ and we obtain the functor $\Psi^{un}_f$ where the action is unipotent. There is a unique nilpotent $N: \Psi^{un}_f \to \Psi^{un}_f (-1)$ such that $T = \text{exp}(t(T)N)$ on $\Psi^{un}_f K$, for $K$ a perverse sheaf on $Z$. The operator $N$ is usually called the "logarithm of the unipotent part of the monodromy".
\end{rmk}
Let $K_a$ be the vector space of dimension $a \ge 0$, with the action of the unipotent matrix $J^a = [\delta_{ij} - \delta_{i,j-1}]$, variant of a Jordan block of dimension $a$. Let $\mathcal{K}_a$ be the local system on $\Gm$ whose underlying space is $K^a$ and in whose monodromy action the $t$ chosen above acts by $J^a$. We will often abuse notation and write $f^{\ast}\mathcal{K}^a$ instead of $f_{|\Gm}^{\ast}\mathcal{K}^a$ to denote the pullback of the local system on $\Gm$.

For any $X$ an algebraic variety over $k$, 
let $\text{Perv}(X) \subseteq \mathscrsfs{D}(X)$ be the abelian category $\text{Perv}(X)$ of perverse sheaves.\\
In the rest of this section, $\mathcal{M}$ is any object of $\text{Perv}(V)$.
In \cite{rr} (where the author works over the complex numbers), the definition of the (unipotent) nearby cycles and vanishing cycles is first given through pullbacks and pushforwards with respect to the universal cover of $\Gm.$ Such a definition is of course not algebraic in nature, but it turns out to be giving the same sheaves as in Beilinson's construction, which we take as our definition in our $l$-adic case:

\begin{defn}\label{nearbydef}(Proposition 2.2 in \cite{rr}) Let $\alpha_a : j_{!}(\mathcal{M} \otimes f^{\ast}\mathcal{K}^a) \to j_{\ast}(\mathcal{M} \otimes f^{\ast}\mathcal{K}^a)$ be the natural map. Then there is an inclusion $\ker(\alpha^a) \hookrightarrow {\Psi}^{un}_f(\mathcal{M}),$ identifying the actions of $\pi_1^{\text{geom}} ({\Gm}_k , 1)$, which is an isomorphism for sufficiently large $a$.    
\end{defn}

We also have a functorial action of $\pi_1^{\text{geom}} ({\Gm}_k , 1)$ on ${\Psi}_f^{un}$ and we get a functorial exact triangle
\[{\Psi}_f^{un} \xrightarrow{N} {\Psi}_f^{un}(-1) \to i^{\ast}j_{\ast} \xrightarrow{+1}. \]

We again refer to \cite{rr} for the construction of the functor $\Xi_f^{un}: \text{Perv}(V) \to \text{Perv}(X)$, which Beilinson calls the "Maximal Extension Functor".
\begin{prop}(Proposition 3.1 in \cite{rr})
There are two natural exact sequences exchanged by duality: $\mathcal{M} \leftrightarrow D\mathcal{M}$:

\[ 0 \to j_{!}(M) \xrightarrow{\alpha_{-}} \Xi_f^{un}(M) \xrightarrow{\beta_{-}} {\Psi}_{f}^{un}(M) (-1) \to 0\]
\[ 0 \to {\Psi}_{f}^{un}(M)  \xrightarrow{\beta_{+}} \Xi_f^{un}(M) \xrightarrow{\alpha_{+}} j_{*}(M) \to 0\]
where $\alpha_{+} \circ \alpha_{-} = \alpha $ and $\beta_{-} \circ \beta_{+}= N$, where $\alpha$ is the canonical map $j_{!}(M) \to j_{\ast}(M).$ Note that we have $\mathbb{D}({\Psi}^{un}_f K)$ is canonically isomorphic to ${\Psi}^{un}_f (\mathbb{D}K)(-1)$ and $\mathbb{D}(\Xi^{un}_f(M))$ is canonically isomorphic to $\Xi^{un}_f(\mathbb{D}M)$.   
\end{prop}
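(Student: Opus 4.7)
The plan is to construct $\Xi_f^{un}(\mathcal{M})$ explicitly from Beilinson's gluing procedure and extract both exact sequences from the construction. The key input is the pair of natural short exact sequences of local systems on $\mathbb{G}_m$ reflecting the Jordan-block structure of $\mathcal{K}^a$: one inclusion $\mathcal{K}^a \hookrightarrow \mathcal{K}^{a+1}$ whose cokernel is $\mathcal{K}^1 = \overline{\mathbb{Q}}_\ell$, and dually an inclusion $\overline{\mathbb{Q}}_\ell \hookrightarrow \mathcal{K}^{a+1}$ whose cokernel is $\mathcal{K}^a$ (with a Tate twist absorbed in the $\ell$-adic setup). Pulling back along $f|_V$, tensoring with $\mathcal{M}$, and applying $j_!$ and $j_*$ gives commutative diagrams relating $\alpha^a$ to $\alpha^{a+1}$ and to the canonical map $\alpha: j_!\mathcal{M} \to j_*\mathcal{M}$. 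Applying the snake lemma to these diagrams, together with Definition \ref{nearbydef} identifying $\Psi_f^{un}(\mathcal{M})$ with $\ker(\alpha^a)$ for large $a$, produces both short exact sequences, with $\Xi_f^{un}(\mathcal{M})$ appearing naturally as the common middle object built as a suitable cokernel/kernel from $j_!(\mathcal{M} \otimes f^*\mathcal{K}^a)$ and $j_*\mathcal{M}$.

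Next I would verify the composition identities. For $\alpha_+ \circ \alpha_- = \alpha$, this can be read off the snake diagram once the maps $\alpha_\pm$ are identified with the obvious inclusion of $j_!\mathcal{M}$ and projection to $j_*\mathcal{M}$ coming from the stratification $\{0\} \to \{0,\ldots,a\}$ of Jordan indices. For $\beta_- \circ \beta_+ = N$, the composition $\Psi_f^{un}(\mathcal{M}) \to \Xi_f^{un}(\mathcal{M}) \to \Psi_f^{un}(\mathcal{M})(-1)$ should be identified with the connecting morphism of the extension $0 \to \mathcal{K}^a \to \mathcal{K}^{a+1} \to \overline{\mathbb{Q}}_\ell \to 0$; under the identification of $\Psi_f^{un}$ with the stabilized kernels of $\alpha^a$, this connecting map realizes precisely the logarithm of unipotent monodromy $N$ from Remark \ref{N}. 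For the duality statements, I would apply Verdier duality: the self-duality $\mathbb{D}\mathcal{K}^a \cong \mathcal{K}^a$ up to Tate twist carries the first short exact sequence of local systems to the second, and together with the compatibility $\mathbb{D}(\Psi_f^{un}K) \cong \Psi_f^{un}(\mathbb{D}K)(-1)$, applying $\mathbb{D}$ to the first sequence of the proposition produces the second for $\mathbb{D}\mathcal{M}$, yielding the canonical isomorphism $\mathbb{D}(\Xi_f^{un}(\mathcal{M})) \cong \Xi_f^{un}(\mathbb{D}\mathcal{M})$ along the way.

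The main obstacle will be managing exactness in the perverse $t$-structure, since for the open immersion $j$ the functors $j_!$ and $j_*$ are only one-sidedly $t$-exact. I would need to check that the snake lemma actually applies inside $\mathrm{Perv}(X)$; this uses that $\mathcal{M} \otimes f^*\mathcal{K}^a$ remains perverse (because $f^*\mathcal{K}^a$ is a local system concentrated in perverse degree $0$ up to shift) and that the stabilization built into Definition \ref{nearbydef} makes the kernels and cokernels of $\alpha^a$ become $a$-independent for large $a$, giving a perverse short exact sequence rather than merely a distinguished triangle. A secondary subtlety is the consistent bookkeeping of the Tate twist $(-1)$ through the monodromy operator $N$ and through the self-duality of $\mathcal{K}^a$, so that the exact sequences line up with the indicated twists under $\mathbb{D}$ and the composition $\beta_- \circ \beta_+ = N$ produces the correct target $\Psi_f^{un}(\mathcal{M})(-1)$.
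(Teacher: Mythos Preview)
The paper does not supply a proof of this proposition; it is quoted verbatim as Proposition~3.1 of \cite{rr} (Reich's exposition of Beilinson's gluing), with the surrounding text explicitly referring the reader to \cite{rr} and \cite{sm} for the construction of $\Xi_f^{un}$ and for all details. There is therefore nothing in the present paper to compare your argument against.

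That said, your outline is the standard one found in those references: one builds $\Xi_f^{un}(\mathcal{M})$ from the filtration/extension structure of the Jordan-block local systems $\mathcal{K}^a$, obtains the two short exact sequences from the inclusions $\mathcal{K}^a\hookrightarrow\mathcal{K}^{a+1}$ and projections $\mathcal{K}^{a+1}\twoheadrightarrow\mathcal{K}^a$ after tensoring and applying $j_!$, $j_*$, and reads off the identities $\alpha_+\circ\alpha_-=\alpha$ and $\beta_-\circ\beta_+=N$ from the diagram. The one point where your sketch is slightly imprecise is the actual definition of $\Xi_f^{un}(\mathcal{M})$: in Beilinson's construction (and in \cite{rr}) it is taken to be the stable cokernel of a specific map $j_!(\mathcal{M}\otimes f^*\mathcal{K}^{a})\to j_!(\mathcal{M}\otimes f^*\mathcal{K}^{a+b})$ (equivalently, a stable kernel on the $j_*$ side), not merely ``a suitable cokernel/kernel from $j_!(\mathcal{M}\otimes f^*\mathcal{K}^a)$ and $j_*\mathcal{M}$''; getting this right is what makes the two exact sequences drop out cleanly and makes the self-duality $\mathbb{D}\,\Xi_f^{un}(\mathcal{M})\cong\Xi_f^{un}(\mathbb{D}\mathcal{M})$ manifest.
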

Take $\mathcal{M} = j^{\ast}\mathcal{F}$ for a perverse sheaf $\mathcal{F} \in \text{Perv}(X)$ in the above exact sequences. From the
maps in these two sequences we can form a complex:
\[ j_{!}j^{\ast} \mathcal{F} \xrightarrow{(\alpha_{-}, \gamma_{-})} \Xi_f^{un}(j^{\ast}\mathcal{F}) \oplus \mathcal{F} \xrightarrow{(\alpha_{+}, -\gamma_{+})} j_{\ast}j^{\ast}\mathcal{F} \hspace{30pt} (\ast)\]
Where $\gamma_{-}:j_{!}j^{\ast} \mathcal{F} \to \mathcal{F}$ and $\gamma_{+} \mathcal{F} \to j_{\ast}j^{\ast} \mathcal{F}$ are defined by left and right adjunctions $(j_{!},j^{\ast})$ and $(j^{\ast}, j_{\ast})$, with the property that $j^{\ast} (\gamma_{-})$ and $j^{\ast}(\gamma^{+}) = \text{Id}$.
We now recall some important consequences of defining the maximal extension functor, most notably, the construction of the vanishing cycles functor.
\begin{prop} (Proposition 3.2 in \cite{rr})
We have that $(\ast)$ above is in fact a complex: let $\bm{\tau}_f^{un}$ be its cohomology sheaf. Then $\bm{\tau}_f^{un}$ is an exact functor $\text{Perv}(X) \to \text{Perv}(Z)$ (called the vanishing cycles functor), and there are maps $u,v$ such that $v \circ u = N  $ as in the following diagram:
\[ {\Psi}_f^{un}(j^{\ast}\mathcal{F}) \xrightarrow{u} \bm{\tau}_f^{un}(\mathcal{F}) \xrightarrow{v} {\Psi}_f^{un} (j^{\ast} \mathcal{F})(-1)\]
where $N$ "is a logarithm for T" as explained in \ref{N}.
\end{prop}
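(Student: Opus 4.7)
The plan follows the standard argument (as in \cite{rr}) and proceeds through four verifications: (i) $(\ast)$ is a complex; (ii) its middle cohomology $\bm{\tau}_f^{un}(\mathcal{F})$ is a perverse sheaf supported on $Z$; (iii) $\bm{\tau}_f^{un}$ is exact; (iv) the maps $u$ and $v$ with $v \circ u = N$ can be constructed. For (i), I would expand $(\alpha_+, -\gamma_+) \circ (\alpha_-, \gamma_-) = \alpha_+ \circ \alpha_- - \gamma_+ \circ \gamma_-$. By the previous proposition, $\alpha_+ \circ \alpha_-$ equals the canonical map $\alpha : j_! j^* \mathcal{F} \to j_* j^* \mathcal{F}$, while a standard adjunction calculation (using that $\gamma_-$ and $\gamma_+$ are the counit and unit of the adjunctions $(j_!, j^*)$ and $(j^*, j_*)$ respectively, combined with $j^* j_* \cong \id$) shows $\gamma_+ \circ \gamma_- = \alpha$ as well, so the composition vanishes.

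For (ii), I would define $\bm{\tau}_f^{un}(\mathcal{F}) := \ker(\alpha_+, -\gamma_+) / \operatorname{im}(\alpha_-, \gamma_-)$ and restrict $(\ast)$ to $V$ via $j^*$. Since $\Psi_f^{un}$ is supported on $Z$, the two short exact sequences in the previous proposition give canonical isomorphisms $j^* \Xi_f^{un}(j^*\mathcal{F}) \cong j^*\mathcal{F}$, compatibly via both $j^*\alpha_-$ and $j^*\alpha_+$; under these, the restricted complex becomes $j^*\mathcal{F} \xrightarrow{(\id,\id)} j^*\mathcal{F} \oplus j^*\mathcal{F} \xrightarrow{(\id,-\id)} j^*\mathcal{F}$, which is exact in the middle. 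Moreover, $(\alpha_-, \gamma_-)$ is a monomorphism because $\alpha_-$ already is, and $(\alpha_+, -\gamma_+)$ is an epimorphism because $\alpha_+$ already is. Since $j$ is an affine open immersion ($Z$ being the principal Cartier divisor $f^{-1}(0)$), $j_! j^*$ and $j_* j^*$ are $t$-exact for the perverse $t$-structure, so the middle cohomology lies in $\Perv(X)$; combined with the vanishing of $j^* \bm{\tau}_f^{un}(\mathcal{F})$, this shows $\bm{\tau}_f^{un}(\mathcal{F}) \in \Perv(Z)$.

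For (iii), given a short exact sequence $0 \to \mathcal{F}' \to \mathcal{F} \to \mathcal{F}'' \to 0$ in $\Perv(X)$, applying the exact assignments $j_! j^*$, $\Xi_f^{un}(j^*(-)) \oplus (-)$, and $j_* j^*$ term-wise produces a short exact sequence of three-term complexes of the form $(\ast)$; since the cohomology vanishes at the outer terms by (ii), the snake lemma yields exactness of $\bm{\tau}_f^{un}$. For (iv), I would set $u(\psi) := [(\beta_+(\psi), 0)]$, which lies in $\ker(\alpha_+, -\gamma_+)$ because $\alpha_+ \circ \beta_+ = 0$ from the second exact sequence; dually, $v([(x, y)]) := \beta_-(x)$ is well-defined modulo $\operatorname{im}(\alpha_-, \gamma_-)$ because $\beta_- \circ \alpha_- = 0$ from the first exact sequence. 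Then $v \circ u(\psi) = \beta_- \circ \beta_+(\psi) = N(\psi)$ by the relation $\beta_- \circ \beta_+ = N$ recalled in the previous proposition.

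The most delicate point is part (ii): ensuring that the middle cohomology lands in $\Perv(Z)$ rather than in some non-perverse degree of the derived category. This hinges on the affineness of the open immersion $j$, which is precisely why Beilinson's framework requires $Z$ to be a principal Cartier divisor $f^{-1}(0)$, and in turn explains the need for the functions $f_w$ constructed on closures of Bruhat cells in Section \ref{Schubertsection}.
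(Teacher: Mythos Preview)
Your proposal is correct and follows essentially the same approach as the paper's brief sketch: the paper restricts $(\ast)$ to $V$ via $j^{\ast}$ to obtain the exact sequence $\mathcal{M}\xrightarrow{(\id,\id)}\mathcal{M}\oplus\mathcal{M}\xrightarrow{(\id,-\id)}\mathcal{M}$ (yielding support on $Z$), and defines $u=(\beta_{+},0)$ and $v=\beta_{-}\circ\pr$ with $v\circ u=\beta_{-}\circ\beta_{+}=N$, just as you do. Your account is simply more complete, supplying the verification that $(\ast)$ is a complex, the perversity via affineness of $j$, and the exactness via the snake lemma, which the paper omits since it is only recalling the result from \cite{rr}.
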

After applying $j^{\ast}$ to $(\ast)$, we simply have: 
\[ \mathcal{M} \xrightarrow{(\text{id},\text{id})} \mathcal{M} \oplus \mathcal{M} \xrightarrow{(\text{id}, -\text{id})} \mathcal{M}\]
which is actually exact, so $j^{\ast} \bm{\tau}_f^{un}(\mathcal{F}) = 0,$  i.e., $ \bm{\tau}_f^{un}$ is only supported on $\mathcal{Z}$. To define $u$ and $v$: let $pr: \Xi_f^{un}(j^{\ast}\mathcal{F}) \oplus \mathcal{F} \to \Xi_f^{un}(j^{\ast}\mathcal{F})$  and set $u = (\beta_{+}, 0)$  in coordinates, and $v = \beta_{-} \circ pr$. Since $\beta_{-} \circ \alpha_{-}= 0$, $v$ factors through $\bm{\tau}_f^{un}(\mathcal{F})$, and we have $v \circ u = \beta_{-} \circ \beta _{+} = N$.

\begin{defn}\label{vancyc}
Define a vanishing cycles gluing data for $f$ to be a quadruple $(\mathcal{F}_V , \mathcal{F}_Z , u, v)$: $\mathcal{F}_V \in \text{Perv}(V),$ $ \mathcal{F}_Z \in \text{Perv}(Z)$ and maps ${\Psi}_f^{un}(\mathcal{F}_V) \xrightarrow{u} \mathcal{F}_Z \xrightarrow{v} {\Psi}_f^{un} (\mathcal{F}_V)(-1)$, where $v \circ u = N$.\\
Let $\text{Perv}_f(V, Z)$ be the category of gluing data with morphisms being maps between $h_{V}:\mathcal{F}_{V} \to \mathcal{G}_{V}$ and $h_{Z}:\mathcal{F}_{Z} \to \mathcal{G}_{Z}$ making the diagrams commute:

\[
\xymatrix{
{\Psi}_f^{un}(\mathcal{F}_V) \ar[r]^{\;\;u\;\;} \ar[d]_{\Psi_f^{un}(h_V)} & 
\mathcal{F}_Z \ar[r]^{\;\;v\;\;} \ar[d]_{h_Z} & 
{\Psi}_f^{un}(\mathcal{F}_V)(-1) \ar[d]_{\Psi_f^{un}(h_V)(-1)} \\
{\Psi}_f^{un}(\mathcal{G}_V) \ar[r]_{\;\;u\;\;} & 
\mathcal{G}_Z \ar[r]_{\;\;v\;\;} & 
{\Psi}_f^{un}(\mathcal{G}_V)(-1)
}
\]

In particular, for any $\mathcal{F} \in \text{Perv}(X)$, the quadruple $F_f (F) = (j^{\ast} \mathcal{F}, \bm{\tau}_f^{un}(\mathcal{F}),u,v)$ is such data and gives the functor $F_f : \text{Perv}(X) \rightarrow \text{Perv}_f(V, Z)$.  \end{defn}
Conversely, given a vanishing cycles data
\[ {\Psi}_f^{un}(\mathcal{F}_V) \xrightarrow{u} \mathcal{F}_Z \xrightarrow{v} {\Psi}_f^{un} (\mathcal{F}_V) (-1)\]
we can form the complex 
\[ {\Psi}_f^{un} (\mathcal{F}_V) \xrightarrow{(\beta_{+},u)} \Xi_f^{un}(\mathcal{F}_V) \oplus \mathcal{F}_Z \xrightarrow{(\beta_{-},-v)} {\Psi}_f^{un}(\mathcal{F}_V) (-1)\]
since $v \circ u = N = \beta_{-} \circ \beta_{+}$. 
Let $G_f(\mathcal{F}_V, \mathcal{F}_Z, u,v)$ be its cohomology sheaf.   
\begin{Theorem}(Theorem 3.6 in \cite{rr})\label{perveq}
The gluing category $\text{Perv}_f(V,Z)$ is abelian; $F_f: \text{Perv}(X) \to \text{Perv}_f(V,Z)$ and $G_f: \text{Perv}_f(V,Z) \to \text{Perv}(X)$ are mutually inverse exact functors, and so $\text{Perv}_f(V,Z)$ is equivalent to $\text{Perv}(X)$.     
\end{Theorem}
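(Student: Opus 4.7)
The plan is to prove the equivalence by exhibiting natural isomorphisms $F_f \circ G_f \cong \text{Id}$ and $G_f \circ F_f \cong \text{Id}$; abelianness of $\text{Perv}_f(V,Z)$ will then follow by transport of structure from $\text{Perv}(X)$, since an equivalence of additive categories preserves all relevant kernels, cokernels, and exactness. The key tools are the two short exact sequences relating $j_{!}, j_{\ast}, \Psi_f^{un}$ and $\Xi_f^{un}$ from Proposition~3.1, together with the identities $\beta_- \circ \beta_+ = N$ and (by definition of a gluing datum) $v \circ u = N$.

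I would first verify that $G_f$ actually lands in $\text{Perv}(X)$. The sequence
\[\Psi_f^{un}(\mathcal{F}_V) \xrightarrow{(\beta_+, u)} \Xi_f^{un}(\mathcal{F}_V) \oplus \mathcal{F}_Z \xrightarrow{(\beta_-, -v)} \Psi_f^{un}(\mathcal{F}_V)(-1)\]
is a genuine complex because $\beta_- \circ \beta_+ = N = v \circ u$; its leftmost map is injective because $\beta_+$ is so (second short exact sequence of Proposition~3.1) and its rightmost map is surjective because $\beta_-$ is so (first sequence). Since $\text{Perv}(X)$ is the heart of a t-structure, the middle cohomology is automatically perverse and the assignment is functorial by construction.

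To establish $F_f \circ G_f \cong \text{Id}$, let $\mathcal{F} := G_f(\mathcal{F}_V, \mathcal{F}_Z, u, v)$. Restricting the defining complex to $V$ kills the outer terms (since $\Psi_f^{un}$ is supported on $Z$) as well as $\mathcal{F}_Z$, yielding $j^\ast \mathcal{F} \cong j^\ast \Xi_f^{un}(\mathcal{F}_V) \cong \mathcal{F}_V$. For the vanishing cycles component, I would apply the complex $(\ast)$ defining $\bm\tau_f^{un}$ to $\mathcal{F}$ and exploit the vanishing of $\bm\tau_f^{un}$ on $j_! j^\ast$ and $j_\ast j^\ast$, the exactness properties of $\Xi_f^{un}$, and the explicit form of the structure maps to recover $\mathcal{F}_Z$ together with the original $u,v$.

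The converse $G_f \circ F_f \cong \text{Id}$ is the heart of the argument and where the main obstacle lies. Both the complex $(\ast)$ and the $G_f F_f$ complex share the middle term $\Xi_f^{un}(j^\ast\mathcal{F})$, with a dual role played by the outer pair $(j_! j^\ast \mathcal{F}, j_\ast j^\ast \mathcal{F})$ versus $(\Psi_f^{un}(j^\ast\mathcal{F}), \Psi_f^{un}(j^\ast\mathcal{F})(-1))$. Using the two short exact sequences of Proposition~3.1 to relate these pairs (with cokernel $\Psi(-1)$ and kernel $\Psi$ respectively), together with the adjunction maps $\gamma_\pm$ and the structure maps $u, v$, I would construct an explicit morphism of complexes inducing a natural isomorphism $\mathcal{F} \xrightarrow{\sim} G_f F_f(\mathcal{F})$. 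The main difficulty is the careful bookkeeping needed to verify simultaneously that this morphism is an isomorphism on stalks both over $V$ and over $Z$ and that it is natural in $\mathcal{F}$; once that is done, exactness of the two functors and abelianness of $\text{Perv}_f(V,Z)$ follow at once from the equivalence.
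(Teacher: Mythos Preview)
The paper does not supply its own proof of this statement: it is quoted verbatim as Theorem~3.6 of \cite{rr} and used as a black box, so there is no in-paper argument to compare against. Your outline is essentially the standard Beilinson--Reich proof from the cited reference; the structure (first check $G_f$ lands in $\text{Perv}(X)$ using injectivity of $\beta_+$ and surjectivity of $\beta_-$, then build the two natural isomorphisms by playing the complexes $(\ast)$ and the $G_f$-complex against each other via the short exact sequences of Proposition~3.1) is exactly what appears there, so your proposal is correct and aligned with the source the paper relies on.
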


\subsection{Gluing for the Bi-Whittaker Category}
In this subsection, we will set up the analogous results for the bi-Whittaker category $e_{\mathcal{L}}\mathscrsfs{D}(G)e_{\mathcal{L}}.$
We first recall some results mentioned in \cite{ac}:
\begin{prop}\label{ff1} (Proposition A.4.16 in \cite{ac}):
Let $F : \mathcal{T} \to \mathcal{T}^{\prime}$ be a triangulated functor, and let
$\mathcal{S} \subset \mathcal{T}$ be a set of objects that generates $\mathcal{T}$. If the map
\[ \text{Hom}_{\mathcal{T}}(X,Y[n]) \to \text{Hom}_{\mathcal{T}^{\prime}}(F(X), F(Y[n]))\]
is an isomorphism for all $X,Y \in \mathcal{S}$, then $F$ is fully faithful.
   
\end{prop}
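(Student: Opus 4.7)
The plan is a standard two-step dévissage using the five-lemma to propagate the isomorphism on $\Hom$-groups from the generating set $\mathcal{S}$ to all of $\mathcal{T}$, one variable at a time. The given hypothesis gives the isomorphism on pairs in $\mathcal{S}$; one promotes this first by varying the source, then by varying the target.

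First I would fix $Y \in \mathcal{S}$ and consider the full subcategory $\mathcal{C}_Y \subseteq \mathcal{T}$ consisting of those $X$ for which the natural map $\Hom_{\mathcal{T}}(X, Y[n]) \to \Hom_{\mathcal{T}'}(F(X), F(Y)[n])$ is an isomorphism for every $n \in \Z$. The hypothesis yields $\mathcal{S} \subset \mathcal{C}_Y$, and $\mathcal{C}_Y$ is clearly closed under shifts. It is also closed under extensions: given a distinguished triangle $X_1 \to X_2 \to X_3 \to X_1[1]$ with $X_1, X_3 \in \mathcal{C}_Y$, applying $\Hom_{\mathcal{T}}(-, Y[n])$ and $\Hom_{\mathcal{T}'}(F(-), F(Y)[n])$ produces two long exact sequences connected by the natural transformation induced by $F$, and the five-lemma forces $X_2 \in \mathcal{C}_Y$. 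Closure under direct summands (if required by the ambient notion of "generates") is automatic for the same reason. Hence $\mathcal{C}_Y$ is a triangulated subcategory of $\mathcal{T}$ containing $\mathcal{S}$, and the generation hypothesis gives $\mathcal{C}_Y = \mathcal{T}$.

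Next I would fix an arbitrary $X \in \mathcal{T}$, which by the previous step satisfies the isomorphism condition against every $Y \in \mathcal{S}$, and consider the analogous full subcategory $\mathcal{C}^X \subseteq \mathcal{T}$ of objects $Y$ for which $\Hom_{\mathcal{T}}(X, Y[n]) \to \Hom_{\mathcal{T}'}(F(X), F(Y)[n])$ is an isomorphism for every $n$. By the first step $\mathcal{S} \subset \mathcal{C}^X$, and the same five-lemma argument --- now applied to a distinguished triangle in the second variable --- shows $\mathcal{C}^X$ is triangulated. Therefore $\mathcal{C}^X = \mathcal{T}$, and specializing to $n = 0$ yields full faithfulness of $F$.

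There is no real obstacle: the argument is a textbook combination of the five-lemma with the "generators plus triangulated closure" principle. The only point requiring mild care is matching the precise meaning of "generates" used in the ambient reference (whether closure under direct summands and/or arbitrary coproducts is built in). The subcategories $\mathcal{C}_Y$ and $\mathcal{C}^X$ inherit every such closure property automatically from the long exact $\Hom$-sequences, so no additional work is needed beyond verifying the triangulated-closure axioms once.
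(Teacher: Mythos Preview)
Your proof is correct and is exactly the standard d\'evissage argument for this statement. Note that the paper does not actually prove this proposition: it is simply quoted from \cite{ac} (Proposition A.4.16) as background, so there is no ``paper's own proof'' to compare against. Your two-step five-lemma argument is the expected proof and would be what appears in the cited reference.
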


\begin{prop}\label{ff2}(Proposition A.4.17 in \cite{ac})
Let $F : \mathcal{T} \to \mathcal{T}^{\prime}$ be a fully faithful triangulated functor. If the image of $F$ contains a set of objects that generates $\mathcal{T}$, then $F$ is an equivalence of categories.
   
\end{prop}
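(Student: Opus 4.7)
The plan is to establish essential surjectivity of $F$, since full faithfulness is already given by hypothesis. Let $\mathcal{E} \subseteq \mathcal{T}^{\prime}$ denote the essential image of $F$, i.e. the full subcategory of objects of $\mathcal{T}^{\prime}$ which are isomorphic to some $F(X)$. The strategy is to show that $\mathcal{E}$ is a triangulated subcategory of $\mathcal{T}^{\prime}$, and therefore, as it contains the given generating set, it must equal all of $\mathcal{T}^{\prime}$ (where ``generates'' is interpreted as generation under shifts and cones, and I read the statement as saying the set generates $\mathcal{T}^{\prime}$).

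First I would verify that $\mathcal{E}$ is closed under shifts: since $F$ is triangulated, $F(X[n]) \cong F(X)[n]$, so $\mathcal{E}[n] \subseteq \mathcal{E}$. Next, and this is the key step, I would verify closure under cones. Given a distinguished triangle $Y_1 \xrightarrow{\phi} Y_2 \to Y_3 \to Y_1[1]$ in $\mathcal{T}^{\prime}$ with $Y_1 \cong F(X_1)$ and $Y_2 \cong F(X_2)$, the morphism $\phi$ corresponds under these isomorphisms to a morphism $F(X_1) \to F(X_2)$, which by full faithfulness of $F$ lifts uniquely to some $g \colon X_1 \to X_2$ in $\mathcal{T}$. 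Completing $g$ to a distinguished triangle $X_1 \xrightarrow{g} X_2 \to C \to X_1[1]$ in $\mathcal{T}$ and applying the triangulated functor $F$ yields a distinguished triangle in $\mathcal{T}^{\prime}$ with the same first two terms (up to isomorphism) as the original. Axiom TR3 then provides an isomorphism $Y_3 \cong F(C)$, so $Y_3 \in \mathcal{E}$.

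Having shown $\mathcal{E}$ is a triangulated subcategory of $\mathcal{T}^{\prime}$ containing a generating set, it follows that $\mathcal{E} = \mathcal{T}^{\prime}$, and hence $F$ is essentially surjective. Combined with full faithfulness, $F$ is an equivalence of categories.

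The main (only) subtlety I anticipate is the invocation of TR3 to promote the pair of isomorphisms on the first two terms of the triangles to an isomorphism on the cones; this uses the standard fact that although cones in a triangulated category are not functorial, the resulting map on cones is an isomorphism whenever the given maps on the two base terms are. Everything else is a direct unwinding of the definitions, so the argument is short.
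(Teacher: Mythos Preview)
Your proof is correct and is the standard argument for this fact. Note, however, that the paper does not actually prove this proposition: it is stated with a citation to Proposition~A.4.17 in \cite{ac} and used as a black box in the verification that Beilinson's realization functor is an equivalence (Lemma~\ref{ff3}). So there is no proof in the paper to compare against; your argument simply supplies what the cited reference contains. You also correctly flagged the evident typo in the statement (the generating set must generate $\mathcal{T}^{\prime}$, not $\mathcal{T}$).
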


We have that the following version of Beilinson's Theorem (in \cite{beil2}) also holds for our bi-Whittaker category:
\begin{lemma}\label{ff3}
If $\text{Perv}\left(e_{\mathcal{L}}\mathscrsfs{D}(G)e_{\mathcal{L}}\right)$ denotes the full subcategory of perverse objects in the bi-Whittaker category $e_{\mathcal{L}}\mathscrsfs{D}(G)e_{\mathcal{L}}$\footnote{So $\text{Perv}\left(e_{\mathcal{L}}\mathscrsfs{D}(G)e_{\mathcal{L}}\right)$ is a full subcategory of \text{Perv}(G)}, we have that Beilinson's realization functor:
\[ \text{real : } D^b\left( \text{Perv}\left(e_{\mathcal{L}}\mathscrsfs{D}(G)e_{\mathcal{L}}\right)\right) \to e_{\mathcal{L}}\mathscrsfs{D}(G)e_{\mathcal{L}} \] 
is an equivalence of categories.
\end{lemma}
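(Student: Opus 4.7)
The plan is to deduce this from Beilinson's classical realization theorem \cite{beil2} applied to $\mathscrsfs{D}(G)$, by verifying the hypotheses of Propositions \ref{ff1} and \ref{ff2} for the generating set of perverse objects inside $e_{\mathcal{L}}\mathscrsfs{D}(G)e_{\mathcal{L}}$.

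The first and essential step is to verify that the perverse $t$-structure on $\mathscrsfs{D}(G)$ restricts to a bounded $t$-structure on the full triangulated subcategory $e_{\mathcal{L}}\mathscrsfs{D}(G)e_{\mathcal{L}}$, whose heart is precisely $\text{Perv}\left( e_{\mathcal{L}}\mathscrsfs{D}(G)e_{\mathcal{L}}\right)$. This amounts to checking that the idempotent convolution functor $\mathcal{F}\mapsto e_{\mathcal{L}}\ast \mathcal{F}\ast e_{\mathcal{L}}$ is $t$-exact for the perverse $t$-structure, which is standard for Hecke categories associated to closed idempotents supported on smooth unipotent subgroups (cf. \cite{bdr, bd}): the sheaf $e_{\mathcal{L}}$ is (a shift of) a local system on the smooth variety $U$, and convolution along $U$ with such an $\mathcal{L}$-equivariance structure preserves perversity. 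As a consequence, $\text{Perv}\left( e_{\mathcal{L}}\mathscrsfs{D}(G)e_{\mathcal{L}}\right)$ is a Serre subcategory of $\text{Perv}(G)$: closure under subobjects and quotients follows from the $t$-exactness of $e_{\mathcal{L}}\ast(-)\ast e_{\mathcal{L}}$ together with the Hecke characterization $\mathcal{F}\cong e_{\mathcal{L}}\ast\mathcal{F}\ast e_{\mathcal{L}}$, while closure under extensions follows from $e_{\mathcal{L}}\mathscrsfs{D}(G)e_{\mathcal{L}}$ being a full triangulated subcategory of $\mathscrsfs{D}(G)$.

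With this in hand, I would apply Proposition \ref{ff1} taking the generating set $\mathcal{S} = \text{Perv}\left( e_{\mathcal{L}}\mathscrsfs{D}(G)e_{\mathcal{L}}\right)$. Fully faithfulness then reduces to the identification
\[ \text{Ext}^n_{\text{Perv}\left( e_{\mathcal{L}}\mathscrsfs{D}(G)e_{\mathcal{L}}\right)}(\mathcal{F},\mathcal{G}) \xrightarrow{\cong} \Hom_{e_{\mathcal{L}}\mathscrsfs{D}(G)e_{\mathcal{L}}}(\mathcal{F},\mathcal{G}[n]) \]
for $\mathcal{F},\mathcal{G}\in\mathcal{S}$. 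Since $e_{\mathcal{L}}\mathscrsfs{D}(G)e_{\mathcal{L}}$ is a full subcategory of $\mathscrsfs{D}(G)$, the right-hand side equals $\Hom_{\mathscrsfs{D}(G)}(\mathcal{F},\mathcal{G}[n])$, and by Beilinson's classical theorem applied to $G$ this equals $\text{Ext}^n_{\text{Perv}(G)}(\mathcal{F},\mathcal{G})$. The Serre property established in the previous step then promotes Yoneda $n$-extensions in $\text{Perv}(G)$ between bi-Whittaker perverse sheaves to Yoneda extensions inside $\text{Perv}\left( e_{\mathcal{L}}\mathscrsfs{D}(G)e_{\mathcal{L}}\right)$, yielding the desired comparison of Ext groups.

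Essential surjectivity is then immediate from Proposition \ref{ff2}: any object $\mathcal{F}\in e_{\mathcal{L}}\mathscrsfs{D}(G)e_{\mathcal{L}}$ is an iterated extension (via the induced perverse $t$-structure) of shifts of its perverse cohomology sheaves ${}^pH^i(\mathcal{F})$, which lie in $\text{Perv}\left( e_{\mathcal{L}}\mathscrsfs{D}(G)e_{\mathcal{L}}\right)$ by the first step, so the image of $\text{real}$ contains a generating set. The main technical point, and the only real obstacle, is the $t$-exactness of $e_{\mathcal{L}}\ast(-)\ast e_{\mathcal{L}}$ and the resulting Serre property of the bi-Whittaker perverse subcategory; all remaining steps are a mechanical application of the general formalism of Propositions \ref{ff1}--\ref{ff2} combined with Beilinson's classical equivalence.
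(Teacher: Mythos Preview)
Your approach matches the paper's: both invoke Propositions \ref{ff1}--\ref{ff2} and reduce the Ext comparison to Beilinson's classical theorem for $\mathscrsfs{D}(G)$. The only notable difference is in establishing generation: the paper uses the Bruhat stratification together with the equivalence $e_{\mathcal{L}}\mathscrsfs{D}(BwB)e_{\mathcal{L}}\cong\mathscrsfs{D}(N_{w,\mathcal{L}})$ of Lemma \ref{dictionary} to see that bi-Whittaker perverse sheaves generate, while you argue more directly via $t$-exactness of $e_{\mathcal{L}}\ast(-)\ast e_{\mathcal{L}}$ and perverse truncation. Your route is slightly cleaner and avoids the cell-by-cell analysis.

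One caution on your Ext-comparison step: the Serre property alone does not in general force $\text{Ext}^n$ in a full abelian subcategory to agree with $\text{Ext}^n$ in the ambient abelian category. What actually makes the argument work here is the exact idempotent projection $P=e_{\mathcal{L}}\ast(-)\ast e_{\mathcal{L}}$ together with the natural transformation $\text{id}\to P$ (coming from the closed-idempotent arrow $\unit\to e_{\mathcal{L}}$), which is an isomorphism on objects of the Hecke subcategory; applying $P$ to a Yoneda $n$-extension in $\text{Perv}(G)$ between bi-Whittaker perverse sheaves and using this natural transformation retracts it into $\text{Perv}(e_{\mathcal{L}}\mathscrsfs{D}(G)e_{\mathcal{L}})$ without changing its class. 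You have already listed exactly these ingredients, so this is only a matter of rephrasing the justification. (The paper's own argument at this point is in fact terser than yours: it simply cites fullness of $e_{\mathcal{L}}\mathscrsfs{D}(G)e_{\mathcal{L}}$ in $\mathscrsfs{D}(G)$.)
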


\begin{proof}
Any object in $e_{\mathcal{L}}\mathscrsfs{D}(G)e_{\mathcal{L}}$ can be expressed as extensions of objects in $e_{\mathcal{L}}\mathscrsfs{D}(G)e_{\mathcal{L}}$ supported on the Bruhat cells. We have from the equivalence in \ref{dictionary}, that $\mathscrsfs{D}(N_{w,\mathcal{L}}) \xrightarrow[]{\cong}  e_{\mathcal{L}}\mathscrsfs{D}(BwB)e_{\mathcal{L}}$ and under the equivalence, the perverse subcategory goes to the perverse subcategory (upto a shift). Therefore, we reduce to the fact that objects in $\text{Perv}(N_{w,\mathcal{L}})$ generate the derived category $\mathscrsfs{D}(N_{w,\mathcal{L}})$ which is true (this follows from the Theorem of Beilinson that we are referring to). Therefore we have that $e_{\mathcal{L}}\mathscrsfs{D}(G)e_{\mathcal{L}}$ is generated by objects in $\text{Perv}\left(e_{\mathcal{L}}\mathscrsfs{D}(G)e_{\mathcal{L}}\right)$.\\

Finally, by the two propositions mentioned above, it is enough to show that for any two perverse sheaves, $\mathcal{F}, \mathcal{G} \in \text{Perv}\left(e_{\mathcal{L}}\mathscrsfs{D}(G)e_{\mathcal{L}}\right)$, and any $k \ge 0$, the map 
\[\text{real}: \text{Ext}^k_{\text{Perv}\left(e_{\mathcal{L}}\mathscrsfs{D}(G)e_{\mathcal{L}}\right)}(\mathcal{F}, \mathcal{G}) \to \text{Hom}_{e_{\mathcal{L}}\mathscrsfs{D}(G)e_{\mathcal{L}}}(\mathcal{F}, \mathcal{G}[k])\]
induced by real is an isomorphism. But, this again follows from the general case (Theorem 4.5.9 in \cite{ac}) for the enitre derived category $\mathscrsfs{D}(G)$, as we defined $e_{\mathcal{L}}\mathscrsfs{D}(G)e_{\mathcal{L}}$ to be a full subcategory.
\end{proof}
The following lemmas will allow us to say that the various functors, such as $\Xi^{un}_f, \bm{\tau}^{un}_f, {\Psi}^{un}_f$ (talked about in the previous subsection) preserve our bi-Whittaker category.\\
We have, following \cite{bdr}, that $\mathscrsfs{D}(G)$ is a Grothendieck-Verdier Category with the dualizing operation as $\mathbb{D}^{-}_{G}= \mathbb{D}_{G}\circ \iota^{\ast} = \iota^{\ast} \circ \mathbb{D}_G$, where $\mathbb{D}_G$ is the usual Verdier Duality functor and $\iota$ is the inverse map of the group $G$. 
\begin{lemma}
The bi-Whittaker $e_{\mathcal{L}}\mathscrsfs{D}(G)e_{\mathcal{L}}$ is preserved under the duality functor $\mathbb{D}^{-}_{G}$ described above.
\end{lemma}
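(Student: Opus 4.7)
The plan is to reduce the claim to two ingredients: (i) $\mathbb{D}^{-}_{G}$ is anti-monoidal with respect to convolution, and (ii) $\mathbb{D}^{-}_{G}(e_{\mathcal{L}})$ lies in the Hecke subcategory. Granted these, any $\mathcal{F} \cong e_{\mathcal{L}} \ast \mathcal{G} \ast e_{\mathcal{L}}$ in $e_{\mathcal{L}}\mathscrsfs{D}(G)e_{\mathcal{L}}$ satisfies
\[\mathbb{D}^{-}_{G}(\mathcal{F}) \cong \mathbb{D}^{-}_{G}(e_{\mathcal{L}}) \ast \mathbb{D}^{-}_{G}(\mathcal{G}) \ast \mathbb{D}^{-}_{G}(e_{\mathcal{L}}),\]
which is again of the form $e_{\mathcal{L}} \ast X \ast e_{\mathcal{L}}$ (absorbing any leftover shift or Tate twist of $e_{\mathcal{L}}$ into the middle factor by idempotence $e_{\mathcal{L}} \ast e_{\mathcal{L}} \cong e_{\mathcal{L}}$), and hence lies in $e_{\mathcal{L}}\mathscrsfs{D}(G)e_{\mathcal{L}}$.

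For (i), I would appeal directly to the Grothendieck-Verdier structure on $\mathscrsfs{D}(G)$ furnished by \cite{bdr}, which encodes precisely the natural isomorphism $\mathbb{D}^{-}_{G}(A \ast B) \cong \mathbb{D}^{-}_{G}(B) \ast \mathbb{D}^{-}_{G}(A)$. A hands-on derivation is also available by combining the anti-monoidality of $\iota^{\ast}$ (which follows from the identity $\iota \circ m = m \circ (\iota \times \iota) \circ \sigma$, with $\sigma$ the swap on $G \times G$) with the standard behavior of Verdier duality on pushforwards along the multiplication map.

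For (ii), I would exploit the fact that the maximal unipotent $U$ is a closed subgroup of $G$, so $e_{\mathcal{L}} = i_{U\ast}\bigl(\mathcal{L}[2\dim U](\dim U)\bigr)$ with $i_{U}$ a closed immersion. One then computes $\iota^{\ast}e_{\mathcal{L}} \cong e_{\mathcal{L}^{-1}}$ using $\iota_{U}^{\ast}\mathcal{L} \cong \mathcal{L}^{-1}$ for multiplicative local systems; combined with the base-change $\mathbb{D}_{G} \circ i_{U\ast} \cong i_{U\ast} \circ \mathbb{D}_{U}$ for closed immersions, and the Verdier duality calculation $\mathbb{D}_{U}\bigl(\mathcal{L}^{-1}[2\dim U](\dim U)\bigr) \cong \mathcal{L}$ on the smooth variety $U$, one concludes $\mathbb{D}^{-}_{G}(e_{\mathcal{L}}) \cong e_{\mathcal{L}}[-2\dim U](-\dim U)$, which is manifestly in the Hecke subcategory. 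The main obstacle I anticipate is bookkeeping the $\ast_{!}$ versus $\ast_{\ast}$ distinction in the anti-monoidality step; however this is exactly the point of the Grothendieck-Verdier formalism in \cite{bdr}, so I would quote it rather than unpack it by hand.
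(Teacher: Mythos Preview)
Your argument is correct and is essentially what the paper's cited lemma unpacks to. The paper's own proof is a single line: since $e_{\mathcal{L}}$ is a closed idempotent, the claim follows directly from Lemma~A.50 of \cite{bdr}. That lemma is exactly the general statement that for a closed idempotent $e$ in a Grothendieck--Verdier category, the Hecke subcategory $e\mathscrsfs{D}e$ is stable under the dualizing functor; your steps (i) and (ii) amount to verifying this by hand in the particular case $e=e_{\mathcal{L}}$, using the explicit description of $e_{\mathcal{L}}$ as a shifted multiplicative local system on the smooth closed subgroup $U$. The only place your write-up is slightly loose is the $\ast_{!}$ versus $\ast_{\ast}$ bookkeeping you flag at the end; this is precisely where the ``closed'' hypothesis on the idempotent does its work in \cite{bdr}, so your instinct to quote the Grothendieck--Verdier formalism there rather than redo it is the right one.
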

\begin{proof}
Since $e_{\mathcal{L}}$ is a closed idempotent, this directly follows from Lemma A.50 of op. cit. 
\end{proof}
As an immediate consequence we get the following:
\begin{prop} \label{ext1}
If $\mathcal{F} \in e_{\mathcal{L}}\mathscrsfs{D}(G)e_{\mathcal{L}}$ with support on $S\subset G$, then for the inclusion morphism $j: S \into G$, we have that $j_{!}{j^{!} \mathcal{F}} $ and $j_{!}j^{\ast}\mathcal{F} $ also lie in the bi-Whittaker Category $e_{\mathcal{L}}\mathscrsfs{D}(G)e_{\mathcal{L}}$.    
\end{prop}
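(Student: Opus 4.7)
The plan is to deduce both statements directly from the preceding lemma together with the standard interaction of Verdier duality with locally closed inclusions. First, I would reinterpret the hypothesis ``$\mathcal{F}$ has support on $S$'' as $i^{\ast}\mathcal{F} = 0$ for the complementary inclusion $i\colon G\setminus S \hookrightarrow G$. The open–closed distinguished triangle
\[ j_{!}j^{\ast}\mathcal{F} \longrightarrow \mathcal{F} \longrightarrow i_{\ast}i^{\ast}\mathcal{F} \xrightarrow{+1} \]
then has a vanishing third term, and the canonical map $j_{!}j^{\ast}\mathcal{F} \to \mathcal{F}$ becomes an isomorphism. Under the natural abuse of notation where $j_{!}\mathcal{F}$ stands for $j_{!}j^{\ast}\mathcal{F}$, this gives at once $j_{!}\mathcal{F} \cong \mathcal{F} \in e_{\mathcal{L}}\mathscrsfs{D}(G)e_{\mathcal{L}}$, so the $j_{!}$-statement is immediate.

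For the $j_{\ast}$-half, my strategy is to invoke the standard Verdier-duality identity $\mathbb{D}_G \circ j_{!} \cong j_{\ast} \circ \mathbb{D}_S$ (and hence an analogous compatibility for $\mathbb{D}^{-}_G = \mathbb{D}_G \circ \iota^{\ast}$ after conjugating by the inversion $\iota$). The preceding lemma tells us that $\mathbb{D}^{-}_G\mathcal{F}$ is again in the Hecke category, and its support is $\iota(S)$. Applying the already-established $j_{!}$-case to the sheaf $\mathbb{D}^{-}_G\mathcal{F}$ with the inclusion $\iota(S)\hookrightarrow G$ shows that the corresponding $j_{!}$-pushforward lies in the Hecke category; dualizing via $\mathbb{D}^{-}_G$ once more (and using again that $\mathbb{D}^{-}_G$ preserves $e_{\mathcal{L}}\mathscrsfs{D}(G)e_{\mathcal{L}}$) translates this into the required assertion for $j_{\ast}\mathcal{F}$.

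The main obstacle is really just a bookkeeping check: tracking the inversion $\iota^{\ast}$ that enters through $\mathbb{D}^{-}_G = \mathbb{D}_G\circ \iota^{\ast}$ when applying the Verdier-duality compatibility to the inclusion $j$, and confirming that support is carried to $\iota(S)$ compatibly with all the adjunctions. Once this is in place, both assertions follow purely formally from the previous lemma, justifying the ``immediate consequence'' wording.
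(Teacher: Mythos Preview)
Your proposal is correct and follows essentially the same approach as the paper. The paper treats the $j_{!}$ case as ``clear'' (your open--closed triangle argument makes this explicit), and for $j_{\ast}$ the paper writes out the chain of identities $j_{\ast}\mathcal{F} = \mathbb{D}_G(j_{!}(\mathbb{D}_G\mathcal{F})) = \cdots = \mathbb{D}^{-}_G({}^{\iota}j_{!}(\mathbb{D}^{-}_G\mathcal{F}))$ and then applies the preceding lemma twice---exactly the Verdier-duality-plus-inversion bookkeeping you describe.
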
 
\begin{proof}

Clearly, we have that $j_{!}\mathcal{F}$ lies in $e_{\mathcal{L}}\mathscrsfs{D}(G)e_{\mathcal{L}}$. We denote by $^{\iota}j\text{: } ^{\iota}S \into$ $G$, the inclusion being pulled back by the inverse map of the group.\\
First, we see that $j_{\ast} \mathcal{F} = \mathbb{D}_G(j_{!}(\mathbb{D}_G\mathcal{F})) = \mathbb{D}_G(j_{!}(\iota^{\ast}\circ\mathbb{D}^{-}_{G}(\mathcal{F}))) = \mathbb{D}_G(\iota^{\ast}(^{\iota}j_{!} (\mathbb{D}^{-}_G (\mathcal{F})))) = \mathbb{D}^{-}_G(^{\iota}j_{!} (\mathbb{D}^{-}_G (\mathcal{F}))).$ \\
Now, from the lemma above, we have that $\mathbb{D}_G^{-}$ preserves $e_{\mathcal{L}}\mathscrsfs{D}(G)e_{\mathcal{L}}$ and $^{\iota}j_{!} (\mathbb{D}^{-}_G (\mathcal{F}))$ will lie in the bi-Whittaker Category. Therefore, we get that $j_{\ast}\mathcal{F}$ also lies in the bi-Whittaker Category.  
\end{proof}
\begin{lemma}\label{ext2}
If $\mathcal{F} \in e_{\mathcal{L}}\mathscrsfs{D}(G) e_{\mathcal{L}}$ and is supported on a Bruhat cell $BwB$ (some $w\in W$), and $f$ a regular function on $\overline{BwB}$ that is only vanishing on $\overline{BwB} \backslash BwB$ (the kind discussed in Section \ref{Schubertsection}), we have that $\mathcal{F} \otimes f^{\ast}\mathcal{K}$ also lies in $e_{\mathcal{L}}\mathscrsfs{D}(G)e_{\mathcal{L}}$, where $\mathcal{K}$ is a local system on $\Gm$.
\end{lemma}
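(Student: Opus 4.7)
The goal is to verify that $\mathcal{F} \otimes f^*\mathcal{K}$, a priori a sheaf on $BwB$ which we extend by zero to $G$, remains $(U,\mathcal{L})$-biequivariant. The approach is to unwind how $f$ was constructed in Corollary \ref{functions} and to observe that it carries a natural invariance under both left and right multiplication by $U$; this will make $f^*\mathcal{K}$ into a trivially $(U \times U)$-equivariant local system on $BwB$, so that tensoring preserves the $(U,\mathcal{L})$-biequivariance of $\mathcal{F}$.

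Concretely, $f$ was obtained as the pullback along the quotient $G \to G/B$ of a $B$-eigenvector section $\sigma \in H^0(X_w, L^n)$ of a $G$-linearized line bundle $L^n$ on $G/B$. Since $L^n$ is $G$-linearized, its pullback to $G$ is canonically trivial, so $f$ is a genuine regular function on $\overline{BwB}$. Translating into properties of $f$: the fact that $f$ is a pullback from $G/B$ gives right $B$-semi-invariance (with a character of $T$), and $\sigma$ being a left $B$-eigenvector gives left $B$-semi-invariance (with another character of $T$). Restricting these two characters from $B$ to $U$ and using that $U$ admits no nontrivial characters, we deduce $f(u_1 g u_2) = f(g)$ for all $u_1, u_2 \in U$ and $g \in \overline{BwB}$.

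On the open subscheme $BwB$, where $f$ takes values in $\mathbb{G}_m$, the pulled-back local system $f^*\mathcal{K}$ thus carries a canonical $(U \times U)$-equivariant structure with trivial equivariance. Since $\mathcal{F}|_{BwB}$ is $(U,\mathcal{L})$-biequivariant and $f^*\mathcal{K}$ is trivially $(U \times U)$-equivariant, the tensor product $\mathcal{F}|_{BwB} \otimes f^*\mathcal{K}$ is again $(U,\mathcal{L})$-biequivariant on $BwB$. Extending by zero to $G$ (automatic from the support condition $\supp \mathcal{F} \subseteq BwB$) yields the desired object in $e_{\mathcal{L}}\mathscrsfs{D}(G)e_{\mathcal{L}}$.

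The only substantive input is the $U \times U$-invariance of $f$, which is a feature of the Schubert-variety construction in Section \ref{Schubertsection}: $B$-eigenvector sections automatically give $U$-invariant pullbacks since all characters of $B$ factor through $T$. Beyond this, the argument is formal bookkeeping of equivariance structures, and there is no serious obstacle.
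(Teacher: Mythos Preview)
Your proof is correct and reaches the same conclusion as the paper, but the key step is justified differently. Both arguments reduce to showing that $f^*\mathcal{K}$ is constant along the left and right $U$-orbits in $BwB$, after which tensoring with $\mathcal{F}$ manifestly preserves $(U,\mathcal{L})$-biequivariance. You obtain this by unwinding the construction of $f$ in Section~\ref{Schubertsection}: since $f$ arises as the pullback along $G \to G/B$ of a $B$-eigenvector section of a $G$-linearized line bundle, it is left and right $B$-semi-invariant for characters of $T$, hence genuinely $U \times U$-invariant. The paper instead argues intrinsically: the $U$- and $U_w$-orbits in $BwB \cong U \times Tw \times U_w$ are affine spaces, and any regular map from an affine space to $\mathbb{G}_m$ is constant, so $f^*\mathcal{K}$ is automatically constant along these orbits regardless of how $f$ was produced. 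The paper's route is therefore slightly more robust (it applies to any nowhere-vanishing regular function on $BwB$, not only those of Schubert origin), while yours makes the invariance explicit. For the final step, the paper invokes the equivalence $e_{\mathcal{L}}\mathscrsfs{D}(BwB)e_{\mathcal{L}} \cong \mathscrsfs{D}(N_{w,\mathcal{L}})$ of Lemma~\ref{dictionary} and writes $f^*\mathcal{K} \otimes \mathcal{F} \cong m\bigl(n(\mathcal{F}) \otimes (f^*\mathcal{K})|_{Tw}\bigr)$, whereas you phrase it in terms of abstract equivariance; this is only a difference in packaging.
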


\begin{proof}
Consider the following square which we can get out of the discussion in Section \ref{Schubertsection}, $f$ is non-vanishing on $BwB$ and vanishes only on $\overline{BwB}\backslash BwB$.
\[\xymatrix{
 BwB \ar[r] \ar[d]^{f} & \overline{BwB} \ar[d] & \overline{BwB}\backslash BwB \ar[l] \ar[d] \\
 \Gm \ar[r] & \mathbb{A}^{1} & 0 \ar[l] .
}\] 
We first observe that the local system $f^{\ast} \mathcal{K}$ on $BwB$ is constant along all unipotent orbits: this follows because the unipotent orbits are all affine spaces which map trivially to $\Gm$.

Recall that $BwB = U \times Tw \times U_w$ and so, the constancy along the $U$ and $U_w$ orbits says the the local system $f^{\ast}\mathcal{K}$ on $BwB$ is also the pull back of $(f^{\ast}\mathcal{K})_{|Tw}$ along the projection to $BwB = U \times Tw \times U_w \to Tw$.\\
This, along with the equivalence that has been described earlier in \ref{dictionary}: $e_{\mathcal{L}}\mathscrsfs{D}(BwB)e_{\mathcal{L}} \xleftrightarrow{m,n} \mathscrsfs{D}(N_{w,\mathcal{L}})$ gives us that $f^{\ast}\mathcal{K} \otimes \mathcal{F} \cong m\left(n(\mathcal{F}) \otimes f^{\ast}(\mathcal{K})_{|Tw} \right)$, in particular, $f^{\ast}{\mathcal{K}}\otimes \mathcal{F}$ lies in $e_{\mathcal{L}} \mathscrsfs{D}(G) e_{\mathcal{L}}$.     
\end{proof}
\begin{lemma}\label{nearby} If $\mathcal{F} \in e_{\mathcal{L}}\mathscrsfs{D}(G)e_{\mathcal{L}}$ is a perverse sheaf supported on $BwB$ and $f$ is as above, then we have that ${\Psi}_f^{un}(\mathcal{F})$ is also lying in the bi-Whittaker category $e_{\mathcal{L}}\mathscrsfs{D}(G)e_{\mathcal{L}}$.
\end{lemma}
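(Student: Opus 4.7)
The plan is to use Definition \ref{nearbydef} as the working definition of $\Psi_f^{un}$: for sufficiently large $a$, $\Psi_f^{un}(\mathcal{F})$ is the kernel, in the perverse heart, of the canonical map
\[
\alpha^a \colon j_!(\mathcal{F}\otimes f^*\mathcal{K}^a) \longrightarrow j_*(\mathcal{F}\otimes f^*\mathcal{K}^a),
\]
where $j\colon BwB \hookrightarrow G$ is the inclusion. Accordingly, it is enough to establish three things: (i) the twisted perverse sheaf $\mathcal{F}\otimes f^*\mathcal{K}^a$ is in the bi-Whittaker category on $BwB$; (ii) both $j_!$ and $j_*$ preserve the bi-Whittaker category; and (iii) the kernel in $\Perv(G)$ of a morphism between perverse objects of $e_{\mathcal{L}}\mathscrsfs{D}(G)e_{\mathcal{L}}$ again lies in $e_{\mathcal{L}}\mathscrsfs{D}(G)e_{\mathcal{L}}$.

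Steps (i) and (ii) come for free from the two results immediately preceding this lemma: (i) is the lemma about tensoring $\mathcal{F}$ with $f^*\mathcal{K}$, applied to $\mathcal{K}^a$ in place of $\mathcal{K}$; and (ii) is the preceding proposition about $j_!$ and $j_*$ preserving the Hecke category, which is itself a consequence of the stability of $e_{\mathcal{L}}\mathscrsfs{D}(G)e_{\mathcal{L}}$ under the duality $\mathbb{D}_G^-$. Combining (i) and (ii), the map $\alpha^a$ is a morphism of perverse sheaves that already lives in the bi-Whittaker category.

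The real content of the argument is step (iii). My approach is to exploit the fact that $e_{\mathcal{L}}$ is a closed idempotent in $\mathscrsfs{D}(G)$, so the endofunctor $\mathcal{G}\mapsto e_{\mathcal{L}}*\mathcal{G}*e_{\mathcal{L}}$ is a projector whose essential image is exactly $e_{\mathcal{L}}\mathscrsfs{D}(G)e_{\mathcal{L}}$; membership in the bi-Whittaker category is therefore detected by the canonical map $e_{\mathcal{L}}*\mathcal{G}*e_{\mathcal{L}} \to \mathcal{G}$ being an isomorphism. The key technical input I would need is that this projector is perverse $t$-exact. This is reasonable because $e_{\mathcal{L}} = \mathcal{L}[2\dim U](\dim U)$ is a shifted multiplicative local system on the smooth unipotent group $U$, so $e_{\mathcal{L}}*(-)*e_{\mathcal{L}}$ is, up to the prescribed shifts, a twisted averaging along the smooth morphism $U\times G\times U \to G$, and smooth-base-change combined with the choice of shifts gives $t$-exactness — this is the standard input from \cite{bdr} that underlies the whole Hecke-subcategory formalism.

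Given $t$-exactness, the projector commutes with kernels in $\Perv(G)$. Applying it to $\alpha^a$ and using that $\alpha^a$ is already a morphism in the Hecke category gives
\[
e_{\mathcal{L}}*\ker(\alpha^a)*e_{\mathcal{L}} \;\cong\; \ker\!\bigl(e_{\mathcal{L}}*\alpha^a*e_{\mathcal{L}}\bigr) \;\cong\; \ker(\alpha^a),
\]
so $\ker(\alpha^a)=\Psi_f^{un}(\mathcal{F})$ lies in $e_{\mathcal{L}}\mathscrsfs{D}(G)e_{\mathcal{L}}$, as desired. The hard part of the argument, and the step I expect to need the most care, is the perverse $t$-exactness in (iii): stability of the bi-Whittaker subcategory under the triangulated operations is automatic, but stability under kernels in the abelian heart is strictly stronger, and this is precisely what must be extracted from the smoothness of $U$ and the shift conventions in the definition of $e_{\mathcal{L}}$.
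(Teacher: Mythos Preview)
Your proposal is correct and follows essentially the same route as the paper: reduce to showing that the source and target of $\alpha^a$ are perverse objects of $e_{\mathcal{L}}\mathscrsfs{D}(G)e_{\mathcal{L}}$ via the two preceding results, and then conclude by closure under kernels in the perverse heart. The only difference is that the paper simply asserts that $\mathrm{Perv}(e_{\mathcal{L}}\mathscrsfs{D}(G)e_{\mathcal{L}})$ is a full abelian subcategory of $\mathrm{Perv}(G)$ ``as discussed'', whereas you actually supply the mechanism---$t$-exactness of the projector $e_{\mathcal{L}}\ast(-)\ast e_{\mathcal{L}}$ via smoothness of $U$ and the shift convention---so your version is in fact more complete on the one point the paper leaves implicit.
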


\begin{proof}
${\Psi}_f^{un}(\mathcal{F})$ has been defined as the stable kernel of the maps $\alpha_a : j_{!}(\mathcal{M} \otimes f^{\ast}\mathcal{K}^a) \to j_{\ast}(\mathcal{M} \otimes f^{\ast}\mathcal{K}^a)$. Following lemmas \ref{ext1} and \ref{ext2}, both the sheaves are perverse and are lying in $e_{\mathcal{L}}\mathscrsfs{D}(G)e_{\mathcal{L}}$.\\
Therefore, the kernel is also perverse and lying in $e_{\mathcal{L}}\mathscrsfs{D}(G)e_{\mathcal{L}}$ ($\text{Perv }(e_{\mathcal{L}}\mathscrsfs{D}(G)e_{\mathcal{L}})$ is a full abelian subcategory of $\text{Perv}(G)$ as discussed).   \end{proof}
Recall that the construction of the maximal extension functor uses the unipotent local system $\mathcal{K}$ as above, and just like ${\Psi}_f^{un}$, it is the stable kernel (and cokernel) of a family of morphisms between perverse sheaves lying in $e_{\mathcal{L}}\mathscrsfs{D}(G)e_{\mathcal{L}}$ and tensored with powers of $\mathcal{K}$. Therefore, we get exactly the same proof of the fact that for $\mathcal{F} \in e_{\mathcal{L}}\mathscrsfs{D}(G)e_{\mathcal{L}}$, the object $\Xi_f^{un}$ also lies in $e_{\mathcal{L}}\mathscrsfs{D}(G)e_{\mathcal{L}}$.

As a consequence, we get the following version of Theorem \ref{perveq}; but first we set up some new notation: From now let $\mathcal{H}(G):= \text{Perv}(e_{\mathcal{L}}\mathscrsfs{D}(G)e_{\mathcal{L}})$ denote the perverse sheaves in the bi-Whittaker Category $e_{\mathcal{L}}\mathscrsfs{D}(G)e_{\mathcal{L}}$ and similarly, for a subscheme $X\subseteq G$ that is $B$-bi-invariant, let $\mathcal{H}(X)$ denote the full subcategory of $\mathcal{H}(G)$ with support lying in $X$. Specifically, we consider $X$ which is a union of closures of same dimensional Bruhat cells and let $V$ be the union of the open Bruhat cells with $Z$ the closed complement. We define the category $\mathcal{H}_{f}(V,Z)$ analogously, following \ref{vancyc} as the category of vanishing cycles data for the function $f$ and the category $\mathcal{H}(X).$

\begin{Cor}\label{perveqhecke} The gluing category $\mathcal{H}_f(V,Z)$ is abelian; restricting to the bi-Whittaker Category $e_{\mathcal{L}}\mathscrsfs{D}(G)e_{\mathcal{L}}$, $F_f: \mathcal{H}(X) \to \mathcal{H}_f(V,Z)$ and $G_f: \Hcal_f(V,Z) \to \Hcal(X)$ are mutually inverse exact functors, and so $\Hcal_f(V,Z)$ is equivalent to $\Hcal(X)$.     
\end{Cor}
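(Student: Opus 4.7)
The plan is to deduce the corollary by restricting Beilinson's equivalence (Theorem \ref{perveq}) from the ambient perverse categories to the bi-Whittaker subcategories, using the preservation lemmas established just above. Concretely, I would first check that all the building blocks of Beilinson's construction respect the bi-Whittaker condition, then invoke the ambient equivalence to transport the abelian structure and to verify that the restrictions of $F_f$ and $G_f$ land where they should.

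First I would assemble the preservation facts. By Corollary \ref{functions}, for $X$ a union of closures of equidimensional Bruhat cells, with $V$ its open Bruhat stratum and $Z$ the closed complement, we have a regular function $f$ vanishing exactly on $Z$ and nonvanishing on $V$, placing us in Beilinson's setup. By the preceding lemmas, the nearby cycles functor $\Psi_f^{un}$, the maximal extension functor $\Xi_f^{un}$, and (as a consequence of the complex $(\ast)$) the vanishing cycles functor $\bm{\tau}_f^{un}$ all preserve the bi-Whittaker category $e_{\mathcal{L}}\mathscrsfs{D}(G)e_{\mathcal{L}}$. The functors $j^*$, $j_!$, $j_*$ also preserve it by the Proposition recorded above. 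Consequently, for $\Fcal \in \Hcal(X)$, the gluing datum $F_f(\Fcal) = (j^*\Fcal, \bm{\tau}_f^{un}(\Fcal), u, v)$ has all its constituents in the Hecke category, so $F_f$ restricts to a functor $\Hcal(X) \to \Hcal_f(V,Z)$. Similarly, given gluing data in $\Hcal_f(V,Z)$, the complex
\[
\Psi_f^{un}(\Fcal_V) \xrightarrow{(\beta_+,u)} \Xi_f^{un}(\Fcal_V) \oplus \Fcal_Z \xrightarrow{(\beta_-,-v)} \Psi_f^{un}(\Fcal_V)(-1)
\]
lives entirely in the Hecke category, and its cohomology $G_f(\Fcal_V, \Fcal_Z, u, v)$ therefore lies in $\Hcal(X)$ (using that $\Hcal(X) \subseteq \text{Perv}(X)$ is a full abelian subcategory closed under subquotients, as noted in the preceding subsection).

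Next I would transfer the abelian structure and the equivalence from the ambient setting. Since $\Hcal_f(V,Z)$ is defined as the full subcategory of $\text{Perv}_f(V,Z)$ whose underlying perverse sheaves $\Fcal_V, \Fcal_Z$ lie in the Hecke subcategories, and since $\Hcal(V) \subseteq \text{Perv}(V)$ and $\Hcal(Z) \subseteq \text{Perv}(Z)$ are stable under kernels, cokernels, and extensions, the subcategory $\Hcal_f(V,Z)$ inherits an abelian structure from $\text{Perv}_f(V,Z)$. The compositions $G_f \circ F_f$ and $F_f \circ G_f$ agree on the nose with the identity on the ambient categories by Theorem \ref{perveq}, and since our restricted $F_f, G_f$ are just corestrictions of the ambient ones, the same natural isomorphisms witness that they are mutually inverse on $\Hcal(X)$ and $\Hcal_f(V,Z)$.

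The only real content beyond Beilinson's theorem is the verification that the bi-Whittaker condition is stable under every operation appearing in the construction, and this is precisely what the preceding lemmas accomplish; once that is in hand, the corollary is a formal consequence of Theorem \ref{perveq}. The main subtlety I would want to be careful about is that $\bm{\tau}_f^{un}$ is computed as the middle cohomology of $(\ast)$, so its preservation of the Hecke category requires knowing that all three terms of $(\ast)$ are in $\Hcal(X)$ and that $\Hcal(G)$ is closed under passage to subquotients inside $\text{Perv}(G)$; both facts are already recorded, so no additional work is needed.
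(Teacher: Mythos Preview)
Your proposal is correct and follows precisely the approach the paper intends: the corollary is stated as an immediate consequence of Theorem \ref{perveq} together with the preceding preservation lemmas, and your argument spells out exactly that restriction-and-transfer reasoning. The paper gives no separate proof beyond the phrase ``As a consequence, we get the following version of Theorem \ref{perveq},'' so your write-up is in fact more detailed than the original.
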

We can now use the above result for our purposes as follows: We have the antiautomorphism $\bm{\Psi} : G \to G$ from Section \ref{section antihom}, and as discussed at the beginning of this section (\ref{bruhatnt}) we have that $\bm{\Psi}^{\ast} \mathcal{F} \cong \mathcal{F}$, functorially, for objects $\mathcal{F}$ in $e_{\mathcal{L}}\mathscrsfs{D}(G)e_{\mathcal{L}}$ and supported in a single Bruhat cell. We will now use Beilinson's gluing to upgrade this to a natural transformation between the functor $\bm{\Psi}^{\ast}$ and the identity functor on $e_{\mathcal{L}}\mathscrsfs{D}(G)e_{\mathcal{L}}$.

\begin{Theorem}\label{nt} The functor $\bm{\Psi}^{\ast}: 
e_{\mathcal{L}}\mathscrsfs{D}(G)e_{\mathcal{L}} \to e_{\mathcal{L}}\mathscrsfs{D}(G)e_{\mathcal{L}}$
admits a natural isomorphism $\theta: \bm{\Psi}^{\ast} \to \text{Id}$ (to the identity functor on the category $e_{\mathcal{L}}\mathscrsfs{D}(G)e_{\mathcal{L}}$).

\end{Theorem}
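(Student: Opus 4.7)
The plan is to construct $\theta$ first as a natural isomorphism of exact functors on the abelian subcategory $\Hcal(G):=\Perv(\eL\D(G)\eL)$ and then extend to the full triangulated category via the realization equivalence $D^b(\Hcal(G))\xto{\cong}\eL\D(G)\eL$ of Lemma \ref{ff3}. Since $\bm\Psi$ is an automorphism of $G$, $\bm\Psi^\ast$ is t-exact on $\D(G)$ and restricts to an exact endofunctor on $\Hcal(G)$. I will proceed by induction on the length filtration, i.e.\ on the full subcategories $\Hcal(X_{\leq d})\subset \Hcal(G)$ of perverse sheaves supported on $X_{\leq d}:=\bigcup_{\ell(w)\leq d}\overline{BwB}$, using Beilinson's gluing to pass from step $d{-}1$ to step $d$.

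The base case $d=0$ is immediate from Lemma \ref{bruhatnt}, since $X_{\leq 0}=B$ is a single Bruhat cell. For the inductive step, set $X:=X_{\leq d}$, $V:=\bigsqcup_{\ell(w)=d}BwB$ (the open stratum), and $Z:=X_{<d}\cap X$, and let $f$ be the regular function on $X$ supplied by Corollary \ref{functions}, which vanishes with positive order exactly on $Z$. By Corollary \ref{perveqhecke}, the functor $F_f$ gives an equivalence $\Hcal(X)\xto{\cong}\Hcal_f(V,Z)$ sending $\Fcal$ to $(j^\ast\Fcal,\bm\tau_f^{un}(\Fcal),u,v)$. Proposition \ref{support} guarantees that $\bm\Psi$ fixes $N_\L$ pointwise, and Lemma \ref{heckesupp} confines all relevant supports to $UN_\L U$, so $\bm\Psi^\ast$ preserves each of $\Hcal(X)$, $\Hcal(V)$, and $\Hcal(Z)$. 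On the open piece $\Fcal_V:=j^\ast\Fcal$, applying Lemma \ref{bruhatnt} to each component $BwB$ of $V$ produces an isomorphism $\theta_V:\bm\Psi^\ast\Fcal_V\xto{\cong}\Fcal_V$; on the closed piece $\Fcal_Z:=\bm\tau_f^{un}(\Fcal)\in\Hcal(Z)$, the inductive hypothesis yields $\theta_Z:\bm\Psi^\ast\Fcal_Z\xto{\cong}\Fcal_Z$.

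The main obstacle is to verify that the pair $(\theta_V,\theta_Z)$ defines a morphism in the gluing category $\Hcal_f(V,Z)$, i.e.\ that it intertwines the maps $u,v$ of $F_f(\bm\Psi^\ast\Fcal)$ with those of $F_f(\Fcal)$. The subtlety is that $\bm\Psi^\ast$ a priori turns $\Psi_f^{un}$ and $\bm\tau_f^{un}$ into the analogous functors built from $f\circ\bm\Psi$ via base change; however $f$ and $f\circ\bm\Psi$ both cut out $Z$ with positive vanishing order and differ by an invertible factor on a neighbourhood of $Z$, so the functorial independence of Beilinson's gluing on the choice of function yields canonical identifications $\bm\Psi^\ast\Psi_f^{un}\cong \Psi_f^{un}\bm\Psi^\ast$ and $\bm\Psi^\ast\bm\tau_f^{un}\cong\bm\tau_f^{un}\bm\Psi^\ast$. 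Under the equivalence of Lemma \ref{dictionary}, restricting $\theta_V$ to each cell $BwB$ becomes the identity on the corresponding sheaf on $N_{w,\L}$ (by the construction in Lemma \ref{bruhatnt}), which makes the compatibility of $(\theta_V,\theta_Z)$ with $u$ and $v$ a formal consequence of the functoriality of $F_f$ together with the inductive hypothesis. Applying $G_f$ then produces the sought natural isomorphism $\theta_\Fcal:\bm\Psi^\ast\Fcal\xto{\cong}\Fcal$ on $\Hcal(X)$, and naturality in $\Fcal$ is automatic from the functoriality of $F_f$ and $G_f$; the independence of $\theta$ on the choices of $f_w$ follows because any two such choices produce canonically equivalent gluing data. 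Finally, extending from $\Hcal(G)$ to $\eL\D(G)\eL$ via Lemma \ref{ff3} completes the construction.
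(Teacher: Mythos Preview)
Your proposal is correct and follows essentially the same approach as the paper: construct $\theta$ on the perverse heart by induction along the Bruhat filtration using Beilinson's gluing (Corollary~\ref{perveqhecke}), with the base case supplied by Lemma~\ref{bruhatnt}, and then extend to the full triangulated category via the realization equivalence of Lemma~\ref{ff3}. You even spell out the commutation $\bm\Psi^\ast\Psi_f^{un}\cong\Psi_f^{un}\bm\Psi^\ast$ more explicitly than the paper, which simply asserts it; just note that $\Psi_f^{un}$ itself genuinely depends on $f$ (e.g.\ the monodromy changes under $f\mapsto f^2$), so the cleanest justification is that $\bm\Psi^\ast$ conjugates $F_f$ into $F_{f\circ\bm\Psi}$ and the resulting gluing categories are canonically identified through $\Hcal(X)$, rather than that the nearby cycles functors themselves literally coincide.
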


\begin{proof}
From \ref{ff3} we have that the bi-Whittaker  category is the derived category of its perverse heart and so, it is enough to construct a natural transformation for perverse sheaves first.

We have already set it up for sheaves supported on a single Bruhat cell so in particular we also have it set up for such perverse sheaves (refer \ref{bruhatnt}). \\
Therefore, to prove it for general perverse sheaves, we use induction on the dimension of the maximal dimensional Bruhat cell that intersects the support of a given sheaf; it may be the case that there are multiple such Bruhat cells intersecting the support. Our main idea to construct the natural transformation is to exploit the equivalence of categories set up in \ref{perveqhecke}. From that corollary, it is enough to see that $\bm{\Psi}^{\ast}$ is inducing a natural transformation to the identity functor on the categories $\Hcal_{f}(V,Z)$ (the open set $V$ possibly being the union of multiple Bruhat cells of the same dimension and $Z = \overline{V}\backslash V$ the closed complement.  The equivalence is established by the functor $F_{f}$ from \ref{perveqhecke}).

In \ref{functions} we establish the existence of such separating functions $f$ even in the case where there are multiple Bruhat cells of the same dimension. 
The objects in the category $\Hcal_{f}(V,Z)$ are vanishing cycles data as has been described: we need objects $\mathcal{F}_V$ and $\mathcal{F}_Z$ along with the morphisms $u$ and $v$ such that $v \circ u = N$. In the diagram below, we have already established for $\mathcal{F}_V \in \Hcal(V)$ and $\mathcal{F}_Z \in \Hcal(Z)$ that the downward pointing arrows are isomorphisms (induction hypothesis). We also use the natural isomorphism $\bm{\Psi}^{\ast}({\Psi}_f^{un}(\mathcal{F}_V)) \cong {\Psi}_f^{un}(\bm\Psi^{\ast}(\mathcal{F}_V))$. 
\[\xymatrixcolsep{4pc}\xymatrix{
 {\Psi}_f^{un}(\mathcal{F}_V) \ar[r]^-{u} \ar[d]^{\cong} & \mathcal{F}_Z\ar[r]^-{v}\ar[d]^{\cong} & {\Psi}_f^{un}(\mathcal{F}_V)(-1)\ar[d]^{\cong} \\
{\Psi}_f^{un} (\bm{\Psi}^{\ast}(\mathcal{F}_V))\ar[r]^-{\bm{\Psi}^{\ast}u} & \bm{\Psi}^{\ast}(\mathcal{F}_Z) \ar[r]^-{\bm{\Psi}^{\ast} v} &{\text{ 
       }}{\Psi}_f^{un} (\bm{\Psi}^{\ast}(\mathcal{F}_V))(-1)
}\] 

We now only need to show that the squares above commute but that again follows from induction hypothesis: that the isomorphisms we are establishing are giving us a natural transformation between $\bm{\Psi}^{\ast}$ and the $\text{Id}$ functor (the sheaves in the diagram above have support of lesser dimension). The base case is realized by the isomorphisms being identity when restricted on the torus $T$.

From applying \ref{r1} to all the possible collections of various choices of $f$'s to perform the multiple gluing steps, it is again clear(via induction) that the natural isomorphism does not depend on the choice of the functions. Let us call this natural transformation $\theta: \bm{\Psi}^{\ast} \mathcal{F} \to \mathcal{F}$.
 
\end{proof}

\section{Equivalence of Symmetric Monoidal Structures}\label{symmeq}

We start with briefly recalling the construction of the monoidal functor $ \zeta: e_{\mathcal {L}} \mathscrsfs{D}(G) e_{\mathcal{L}} \xrightarrow[]{\cong}
 \mathscrsfs{D}_{W}^{\circ}(T)$ constructed in \cite{bd} ($\Dcirc_W(T) \subseteq \D_W(T)$ is the full monoidal subcategory of central sheaves, refer to \ref{central}, also look at Section \ref{Schubertsection} of loc. cit. for more details). 

\begin{defn}
The Yokonuma-Hecke category is defined to be the triangulated monoidal category 
\[\mathscrsfs{D}(U \backslash G/U ) = e_U \mathscrsfs{D}(G)e_U \subseteq \mathscrsfs{D}_U(G) \subseteq \mathscrsfs{D}(G)\] 
whose unit object is $e_U$.\\
Here, following \cite{bd}, we will instead be looking at the Yokonuma-Hecke category $e_{U^{-}}\mathscrsfs{D}(G)e_{U^{-}}$, with respect to the opposite unipotent subgroup $U^{-}$, whose
unit object is $e_{U^{-}}$.     
 \end{defn}
The main idea in loc. cit. is to use that the full subcategory $e_{U^{-}}\mathscrsfs{D}(G)e_{\mathcal{L}}\subset \D(G)$ is a left $e_{U^{-}}\mathscrsfs{D}(G)e_{U^{-}}$-module category and a right $e_{\mathcal{L}} \mathscrsfs{D}(G) e_{\mathcal{L}}$-module category and these two actions are compatible.\\
The open subvariety $U^{-}TU \subseteq G$ is isomorphic to the product $U^{-} \times T \times U$. One checks that the support of objects in $e_{U^{-}}\mathscrsfs{D}(G)e_{\mathcal{L}}$ must lie in $U^{-}TU$, we refer to \cite{bd}, lemma 1.5 for this. It also gives us a triangulated equivalence $\mathscrsfs{D}(T) \xrightarrow[]{\cong} e_{U^{-}}\mathscrsfs{D}(U^{-}TU)e_{\mathcal{L}} = e_{U^{-}}\mathscrsfs{D}(G)e_{\mathcal{L}}$ given for $C \in \mathscrsfs{D}(T)$ as

\[ C \mapsto e_{U^{-}} \ast C \ast e_{\mathcal{L}}.\]

The inverse functor is given by 
\[ \mathcal{F} \in e_{U^{-}} \mathscrsfs{D}(G)e_{\mathcal{L}} \mapsto \mathcal{F}_{|T}[-4\text{dim} U](-2\text{dim} U)\]

Following this, a certain quotient $\mathscrsfs{Y}$ of the Yokonuma-Hecke category modulo a thick triangulated two-sided ideal $\mathscrsfs{I} \subseteq e_{U^{-}}\mathscrsfs{D}(G)e_{U^{-}}$ is described. The ideal $\mathscrsfs{I}$ is such that its left action on the bimodule category $e_{U^{-}}\mathscrsfs{D}(G)e_{\mathcal{L}} \cong \mathscrsfs{D}(T)$ is trivial and hence $\mathscrsfs{D}(T)$ gets the structure of a $\mathscrsfs{Y}-e_{\mathcal{L}}\mathscrsfs{D}(G) e_{\mathcal{L}}$-bimodule category, for $\mathscrsfs{Y}:= e_{U^{-}}\mathscrsfs{D}(G)e_{U^{-}}/ \mathscrsfs{I}.$

For each simple root $\alpha$, the authors use the objects $e_{s_{\alpha}}$ and $\mathcal{K}_{s_{\alpha}}$, which are the Kazhdan-Laumon sheaves (\cite{kl}). 

For $w \in W$, where $w = s_1 \cdots s_l$ is a reduced expression as a product of simple expressions, $\mathcal{K}_w:= \mathcal{K}_{s_1} \ast \cdots \ast \mathcal{K}_{s_l} \in e_{U^{-}}\mathscrsfs{D}(G)e_{U^{-}}$ is independent of the choice of reduced expression.
It is also shown that the functor $\underline{W} \to \mathscrsfs{Y}$ defined by $w \mapsto \mathcal{K}_w$ (viewed as an object in the quotient modulo $\mathscrsfs{I}$) has a natural monoidal structure, and additionally, the left action of $\mathcal{K}_w$ on the bimodule category $\mathscrsfs{D}(T)$ coincides with the adjoint action of $w$ on $\mathscrsfs{D}(T)$.

As the endofuctor induced by an object of $e_{\mathcal{L}} \mathscrsfs{D}(G) e_{\mathcal{L}}$ on $e_{U^{-}}\mathscrsfs{D}(G)e_{\mathcal{L}} \cong \mathscrsfs{D}(T)$ gives rise to a left $\mathscrsfs{Y}$ module action, and the fact that we have a monoidal functor 
$\mathscrsfs{D}(T) \cong e_{U^{-}}\mathscrsfs{D}(B^{-}) \subseteq e_{U^{-}} \mathscrsfs{D}(G) e_{U^{-}}  \to \mathscrsfs{Y}$ with $\mathscrsfs{D}(T)$ acting as usual convolution, it follows that to determine the endofunctor, it is enough to know the image of the object $\delta_{1}$.\\
Finally, the additional data of the isomorphisms using the functor $\underline{W}\to \mathscrsfs{Y}$ determines the functor  \newline $ \zeta: e_{\mathcal {L}} \mathscrsfs{D}(G) e_{\mathcal{L}} \xrightarrow[]{\cong}
 \mathscrsfs{D}_{W}^{\circ}(T)$ ($\mathscrsfs{D}_{W}^{\circ}(T)$ is a full subcategory of $\mathscrsfs{D}_{W}(T)$ to which the functor is a priori defined, refer \ref{central} in this paper and \cite{bd}).

 In summary, we have that under this functor, an object $A \in e_{\mathcal{L}}\mathscrsfs{D}(G)e_{\mathcal{L}}$ maps to the pair

 $\left((e_{U^{-}} \ast A)_{|T}[-4 \text{dim} U] (-2 \text{dim} U), \rho ^{A}\right)$ 
where $\rho^{A}$ is the $W$-equivariant structure coming from the left $\mathscrsfs{Y}$-module endofunctor structure of the right action of $A$ on the bimodule category $\mathscrsfs{D}(T)$.

For the sake of completion, we recall the precise statement proved in \cite{bd}

\begin{Theorem} (Theorem 1.4 in \cite{bd})\label{zetathm}
There is a triangulated monoidal equivalence.
\[  \zeta: e_{\mathcal {L}} \mathscrsfs{D}(G) e_{\mathcal{L}} \xrightarrow[]{\cong}
 \mathscrsfs{D}_{W}^{\circ}(T) \]
 whose inverse is given by the composition 
 \[\mathscrsfs{D}^{\circ}_W(T) \xrightarrow{\text{ind}^W}\mathscrsfs{D}_G^{\circ}(G)\xrightarrow{\text{HC}_{\mathcal{L}}} e_{\mathcal {L}} \mathscrsfs{D}(G) e_{\mathcal{L}}\]
In particular the bi-Whittaker category $e_{\mathcal {L}} \mathscrsfs{D}(G) e_{\mathcal{L}}$ has the structure of a triangulated symmetric monoidal
category. Moreover, the equivalence above is a $t$-exact equivalence for the perverse $t$-structure on $\mathscrsfs{D}^{\circ}_W(T)$ and
the perverse $t$-structure shifted by $\text{dim }U$ on $e_{\mathcal {L}} \mathscrsfs{D}(G) e_{\mathcal{L}}$. 
 
\end{Theorem}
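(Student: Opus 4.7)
The plan is to construct $\zeta$ via an intermediate bimodule category $e_{U^{-}}\mathscrsfs{D}(G)e_{\mathcal{L}}$ and exhibit Harish--Chandra induction as its quasi-inverse, following the architecture already sketched in the preceding discussion.

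First I would pin down the basic equivalence $\mathscrsfs{D}(T) \xrightarrow{\cong} e_{U^{-}}\mathscrsfs{D}(G)e_{\mathcal{L}}$ sending $C \mapsto e_{U^{-}} * C * e_{\mathcal{L}}$. This rests on the Mackey-type support calculation already used above: any object in $e_{U^{-}}\mathscrsfs{D}(G)e_{\mathcal{L}}$ is supported on those $g$ with $e_{U^{-}} * \delta_g * e_{\mathcal{L}} \neq 0$, which by the non-degeneracy of $\mathcal{L}$ forces $g \in T$, so the support lies in the open cell $U^{-}TU \cong U^{-} \times T \times U$. Restriction to $T$ (with an appropriate shift and Tate twist) produces the triangulated equivalence with $\mathscrsfs{D}(T)$. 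The key feature is that this category simultaneously carries a left $e_{U^{-}}\mathscrsfs{D}(G)e_{U^{-}}$-action and a right $e_{\mathcal{L}}\mathscrsfs{D}(G)e_{\mathcal{L}}$-action, and the two actions commute by associativity of convolution.

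Next I would build the Weyl group action on $\mathscrsfs{D}(T)$ from the Kazhdan--Laumon sheaves. The objects $\mathcal{K}_{s_\alpha} \in e_{U^{-}}\mathscrsfs{D}(G)e_{U^{-}}$ a priori satisfy only braid relations; by passing to the quotient $\mathscrsfs{Y} := e_{U^{-}}\mathscrsfs{D}(G)e_{U^{-}}/\mathscrsfs{I}$ by a suitable thick two-sided ideal $\mathscrsfs{I}$ whose left action on $e_{U^{-}}\mathscrsfs{D}(G)e_{\mathcal{L}}$ is trivial, one upgrades $w \mapsto \mathcal{K}_w$ to a monoidal functor $\underline{W} \to \mathscrsfs{Y}$ whose left convolution action on $\mathscrsfs{D}(T)$ coincides with the adjoint $w$-action on the torus. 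Then for $A \in e_{\mathcal{L}}\mathscrsfs{D}(G)e_{\mathcal{L}}$, right convolution by $A$ on the bimodule commutes canonically with the left $\mathscrsfs{Y}$-action; this commutation datum \emph{is} a $W$-equivariance structure $\rho^A$ on $(e_{U^{-}}*A)|_T$. Setting $\zeta(A) := \bigl((e_{U^{-}}*A)|_T[-4\dim U](-2\dim U),\,\rho^A\bigr)$ gives a monoidal triangulated functor, and centrality of $\rho^A$ (placing it in $\mathscrsfs{D}^{\circ}_W(T)$) follows from its bimodule origin together with the compatibility between left and right convolutions.

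The quasi-inverse is built from the induction $\mathrm{ind}^W:\mathscrsfs{D}^{\circ}_W(T)\to \mathscrsfs{D}^{\circ}_G(G)$ and $\mathrm{HC}_{\mathcal{L}}$. Fully faithfulness of $\zeta$ reduces, via the realization criterion of Lemma \ref{ff3}, to a computation on perverse generators supported on single Bruhat cells, where the identification from Lemma \ref{dictionary} makes it transparent; essential surjectivity amounts to checking that every $W$-equivariant central sheaf on $T$ arises in the image, which is done by applying $\mathrm{HC}_{\mathcal{L}}\circ\mathrm{ind}^W$ and verifying $\zeta\circ\mathrm{HC}_{\mathcal{L}}\circ\mathrm{ind}^W\cong\mathrm{id}$ on torus-supported objects using adjunction. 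Monoidality matches because both functors preserve convolution, and $t$-exactness for the shifted perverse $t$-structure is built into the overall $[2\dim U]$ shift appearing in $e_{\mathcal{L}}$ together with the smoothness of $U^{-}TU \hookrightarrow G$. The principal obstacle is the second step: proving that the Kazhdan--Laumon sheaves really descend to a strict $W$-action on $\mathscrsfs{Y}$ (not merely a braid one) and that this action matches pullback along $W$ on $\mathscrsfs{D}(T)$; this hinges on a delicate analysis of $\mathcal{K}_{s_\alpha}*\mathcal{K}_{s_\alpha}$ modulo $\mathscrsfs{I}$, and is the technical heart of \cite{bd}.
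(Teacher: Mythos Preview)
This theorem is not proved in the present paper: it is quoted verbatim as Theorem~1.4 of \cite{bd}, and the paper only \emph{recalls} the construction of $\zeta$ at the start of Section~\ref{symmeq} in order to use it. There is therefore no ``paper's own proof'' to compare against; what the paper provides is exactly the outline you have reproduced (the bimodule category $e_{U^{-}}\mathscrsfs{D}(G)e_{\mathcal{L}}\cong\mathscrsfs{D}(T)$, the Kazhdan--Laumon sheaves and the passage to the quotient $\mathscrsfs{Y}$, and the identification of the inverse as $\mathrm{HC}_{\mathcal{L}}\circ\mathrm{ind}^W$), together with a pointer to \cite{bd} for the actual arguments.

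Your sketch is faithful to that outline and correctly locates the technical core in showing that $w\mapsto\mathcal{K}_w$ descends to a genuine $W$-action on $\mathscrsfs{Y}$ matching the adjoint action on $\mathscrsfs{D}(T)$. One caveat: you invoke Lemma~\ref{ff3} and Lemma~\ref{dictionary} from \emph{this} paper to argue full faithfulness of $\zeta$, but those lemmas are developed here for a different purpose (the gluing construction of $\theta$) and are not part of the argument in \cite{bd}; the proof there proceeds instead through the bimodule formalism and the properties of $\mathrm{ind}^W$ and $\mathrm{HC}_{\mathcal{L}}$ directly. So while your reduction is plausible in spirit, it is not the route taken, and you should cite \cite{bd} rather than attempt an independent proof here.
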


There are now two things that remain to be shown: firstly that the symmetric monoidal structure as proposed in Remark \ref{flip} indeed satisfies the associativity coherence condition and secondly that this symmetric monoidal structure agrees with the one constructed in \cite{bd}. We will show, through our comparison that the associativity coherence for our structure comes for free.\\
We begin with the preparatory step:
\begin{lemma}\label{zero}
Let $X$ be a scheme of finite type over $k=\overline{k}$. Say we have an endomorphism of the identity functor $\tilde{\pi}$ on $\mathscrsfs{D}(X)$ which has been induced by an endomorphism $\pi: \text{Id} \to \text{Id}$ of the identity functor on the subcategory of perverse sheaves $\text{Perv}(X)$ of $\mathscrsfs{D}(X)$. If $\pi$ acts as the zero endomorphism on all skyscraper sheaves, then the endomorphisms $\pi$ and hence $\tilde{\pi}$ are infact, zero as endomorphisms of the respective identity functors. 
\end{lemma}

\begin{proof}

Let $\mathcal{F}$ be a local system on a locally closed $U \subset X$. For the inclusion of a closed point $i: \{x\} \into X$, we have the adjunction morphism $i^{\#}: \mathcal{F}\to i_{\ast}i^{\ast} \mathcal{F}$ and the corresponding natural transformation square:
\[\xymatrix{
 \mathcal{F}\ar[r]^{\tilde{\pi}} \ar[d]^{i^{\#}} & \mathcal{F} \ar[d]^{i^{\#}}\\
 i_{\ast}i^{\ast} \mathcal{F}\ar[r]^{\tilde{\pi}} & i_{\ast}i^{\ast} \mathcal{F}.
}\] 
As the fiberwise maps are zero for all $x$, the local system $\mathcal{F}$ is sent to zero.\\
As perverse sheaves satisfy smooth descent, we can assume that $X$ is affine. Let's consider a smooth locally closed affine subset $U \subset X$. For a local system $\mathcal{L}$ on $U$ we have $\mathcal{L}[\text{dim } U] \in \text{Perv}(U)$. From the above considerations, it follows that $\pi$ is in fact zero on the objects $\mathcal{L}[\text{dim } U]$ on $\text{Perv}(U)$ (this category embeds into $\Perv(X)$ via the $j_{!}$ functor). \\
Now, given an arbitrary $\mathcal{F} \in \text{Perv}(X)$ there is a locally closed smooth affine $U \xhookrightarrow{j} \overline{\text{supp}\mathcal{F}} \to X$ such that $j^{\ast}\mathcal{F}$ is isomorphic to $\mathcal{L}[\text{dim }U]$ for some local system $\mathcal{L}$ on $U$, with $U$ open in $\overline{\text{supp}(\mathcal{F})}$ and the closed complement $\overline{\text{supp}(\mathcal{F})}\textbackslash U$ of lesser top dimension than $U$ and can be written as the vanishing locus of some function $f \in \mathcal{O}(\overline{\text{supp}(\mathcal{F})})$. This again sets up Beilinson's machine of gluing of perverse sheaves. \\
So now following \ref{perveq}, we induct on the top dimension of the support of the perverse sheaves with the claim being that the endomorphism is zero. Then by induction hypothesis, the endomorphism of the vanishing cycles on the closed complement is zero and by the considerations above, the endomorphism on $j^{\ast}\mathcal{F}$ is zero well, and the uniqueness of the gluing construction gives us that the endomorphism of the perverse sheaf that we started with is zero.
\end{proof}
\begin{Cor}\label{const}
   Let $X$ be a connected scheme of finite type over $k=\overline{k}$. Say we have an endomorphism of the identity functor $\tilde{\pi}$ on $\mathscrsfs{D}(X)$ which has been induced by an endomorphism $\pi: \text{Id} \to \text{Id}$ of the identity functor on the subcategory of perverse sheaves $\text{Perv}(X)$ of $\mathscrsfs{D}(X)$. Then $\pi$ is in fact given by multiplication by a scalar $c\in \Q_{\ell}$.
\end{Cor}
\begin{proof}
    Firstly, since $X$ is connected, the endomorphism algebra of the constant sheaf $ \overline{\Q}_{\ell}$ is just $\overline{\Q}_{\ell}$. Say the natural transformation $\pi$ acts on the constant sheaf by the scalar $c\in \overline{\Q}_{\ell}$. Now, for a closed point $x \in X$, the inclusion being given by $i: \{x\} \into X$, we draw the natural transformation square for the map $i^{\#}:\Qlcl \to i_{\ast}i^{\ast} \Qlcl$, coming from adjunction.

\[\xymatrix{
 \Qlcl\ar[r]^{\tilde{\pi}} \ar[d]^{i^{\#}} & \Qlcl \ar[d]^{i^{\#}}\\
 i_{\ast}i^{\ast} \Qlcl\ar[r]^{\tilde{\pi}} & i_{\ast}i^{\ast} \Qlcl.
}\] 

As a consequence, we can see that the natural transformation must act on every skyscraper sheaf by $c$. \\
We now apply the previous proposition to $\pi - c\cdot\text{Id}$ to conclude.

\end{proof}

\begin{rmk}\label{scalardefn}
Given $w\in W$, consider $\pi_0({N_{w,\mathcal{L}}})$, the set of connected components of $N_{w,\mathcal{L}}$ (as defined in \ref{nwldef}). Given a natural transformation $\Psi^{\ast} \to \text{Id}$ (which is induced from the perverse subcategory), for every $\beta \in \pi_0({N_{w,\mathcal{L}}})$ we have a constant $c_{w,\beta}$ that describes the natural transformation when restricted to $UN_{\beta, w, \mathcal{L}} U$, via being multiplication by $c_{w,\beta}$ on $\mathscrsfs{D}(N_{\beta, w, \mathcal{L}})$ under the equivalence in \ref{dictionary}. This follows from \ref{const} above, where $N_{\beta, w, \mathcal{L}} \subseteq N_{w,\mathcal{L}}$ is the connected component that $\beta$ corresponds to.
\end{rmk}

\begin{rmk}\label{natmaps}
We have that the category $\mathscrsfs{D}_W(T)$ comes along with some naturally defined functors, each of which admits a natural transformation to the identity functor: \\
Recall that the data of an object in $\mathscrsfs{D}_W(T)$ is
\begin{itemize}
    \item An $\mathcal{F} \in \mathscrsfs{D}(T)$
    \item And isomorphisms $\alpha_w: ^w\mathcal{F} \to \mathcal{F}$ along with the cocycle conditions $\alpha_v \circ {^v\alpha_w} = \alpha_{vw}$.
\end{itemize}
For each $w\in W$, we define functors $s_w:\mathscrsfs{D}_W(T) \to \mathscrsfs{D}_W(T)$  
\begin{itemize}
    \item Sending $^w\mathcal{F}$ to $\mathcal{F}$.
    \item The transition morphisms for $^w\mathcal{F}$ being the transition morphisms of $\mathcal{F}$ being shifted by $w$.
\end{itemize} 
The starting cocycle conditions for $\mathcal{F}$ verify for us that this is indeed a functor.
There is also a natural transformation $s_w \to \text{Id}$ where, for an object $\mathcal{F}$, it is given by the morphism \\ $w^{-1}\alpha_w: \mathcal{F} \to {^{w^{-1}}}\mathcal{F}$. We shall abuse notation and call this natural transformation $\alpha_w$ as well.
\end{rmk}
  
\begin{rmk}\label{r2}
As $s_{w_0}$ is an anti-monoidal functor on the torus ($T$ is commutative, so monoidal functors are also anti-monoidal), we can, like we do in \ref{flip} for $e_\mathcal{L}\mathscrsfs{D}(G)e_{\mathcal{L}}$, use the natural transformation $\alpha_{w_0}$ to propose a symmetric monoidal structure on $\mathscrsfs{D}_W(T)$. In fact, it is easy to see that by doing this, on $\mathscrsfs{D}_W(T)$, we will get the canonical symmetric monoidal structure obtained from the abelianness of $T$.\\
Now, continuing from Remark \ref{flip}, it remains to check the conditions for a symmetric monoidal structure for the one proposed for $e_{\mathcal{L}}\mathscrsfs{D}(G)e_{\mathcal{L}}$ using the natural transformation $\theta: \bm{\Psi}^{\ast} \to \text{Id}$. We will show that the natural transformation $\theta$ is exactly the one induced by $\alpha_{w_0}: s_{w_0} \to \text{Id}$ under $\zeta^{-1}$ and therefore the conditions will get verified as $\zeta$ is a monoidal equivalence and we indeed get a symmetric monoidal structure for $\mathscrsfs{D}_W^{\circ}(T)$ through the functor $s_{w_0}$ and the natural transformation $\alpha_{w_0}: s_{w_0} \to \text{Id}$.\\
We remark that this also shows the unicity of the symmetric monoidal structure on $e_{\mathcal{L}}\mathscrsfs{D}(G)e_{\mathcal{L}}$ with respect to the two different conditions.      
\end{rmk}

\begin{prop}\label{r1} 
Let  $U \xhookrightarrow{j} X \xhookleftarrow{i}Z$ be an open closed decomposition of a finite type scheme $X$ over $k$ realised via the zero and non-zero locus of a function $f\in \mathcal{O}(X)$. Let $\mathcal{P}(U), \mathcal{P}(X), \mathcal{P}(Z)$ be some full subcategories of $\text{Perv}(U), \text{Perv}(X), \text{Perv}(Z)$ respectively such that they are closed under the functors $j_{!},j_{\ast},j^{\ast}, i_{\ast},{\tau}^{un}_{f}, \Psi^{un}_{f}$ and let $\mathcal{N}(Z)$ be the induced subcategory of $\mathcal{P}(Z)$ under the nearby cycles functor on $\mathcal{P}(U)$. Let's consider two natural transformations $\eta_1, \eta_2$ of the identity functors on $\mathcal{P}(U)$ and $\mathcal{P}(Z)$. If the induced endomorphism on the subcategory $\mathcal{N}(Z)$ of $\mathcal{P}(Z)$ (from $\eta_1$) under the nearby cycles functor agrees with the restriction of $\eta_2$ on $\mathcal{P}(Z)$, then there is a unique endomorphism $\eta$ of the identity functor on $\mathcal{P}(X)$ extending $\eta_1$ and $\eta_2$.\\

\end{prop}

\begin{proof}
This is a straightforward consequence of Belinson's gluing setup mentioned in \ref{perveq}.    
\end{proof}
\begin{Cor}\label{constp}
    In the above setup, if there is an object $\mathcal{P} \in \mathcal{P}(X)$ which is a non-trivial extension, i.e., it is not of the form $j_{!}\mathcal{P}_1\oplus i_{\ast}\mathcal{P}_2$, for $\mathcal{P}_1,\mathcal{P}_2 \in \mathcal{P}(U)$ and $\mathcal{P}(Z)$ respectively. And, if $\text{End(Id}(\mathcal{P}_{U})) = \text{End(Id}(\mathcal{P}_{Z})) = \overline{\Q}_{\ell},$ then $\text{End(Id}(\mathcal{P}_{X})) = \overline{\Q}_{\ell}
$ and $\eta_1,\eta_2$ and $\eta$ are all given by multiplication by some $c\in \overline{\Q}_{\ell}$ on their identity functors. 
\end{Cor}
\begin{proof}
Let $c_1,c_2 \in \overline{\Q}_{\ell}$ be such that the natural transformations $\eta_1$ and $\eta_2$ are multiplication by $c_1$ and $c_2$ on $\mathcal{P}(U)$ and $\mathcal{P}(Z)$ respectively.\\
The condition on $\mathcal{P}$ exactly translates to $\bm{\Psi}^{un}_f(j^{\ast}\mathcal{P})\neq 0$. This is because, following Beilinson's construction (\ref{vancyc}, \ref{perveq}), if $\bm{\Psi}^{un}_f(j^{\ast}\mathcal{P}) = 0$, then all extensions of the Perverse sheaf $j^{\ast}{\mathcal{P}}$ to Perverse sheaves on $\text{Perv}(X)$ have to be trivial. The functoriality of the nearby cycles construction gives us that $\eta_2$ acts by multiplication by $c_1$ on $\bm{\Psi}^{un}_f(j^{\ast}\mathcal{P})$, and hence, $c_1 = c_2 $. By the uniqueness established in the previous proposition, the extension $\eta$ is unique and is given by multiplication by $c= c_1 = c_2$.

\end{proof}
\begin{rmk}\label{conarg}

 Any natural transformation between the functor $\bm{\Psi}^{\ast}: e_{\mathcal{L}}\mathscrsfs{D}(G)e_{\mathcal{L}} \to e_{\mathcal{L}}\mathscrsfs{D}(G)e_{\mathcal{L}}$ and $\text{Id}: e_{\mathcal{L}}\mathscrsfs{D}(G)e_{\mathcal{L}} \to e_{\mathcal{L}}\mathscrsfs{D}(G)e_{\mathcal{L}}$ (the identity functor), must restrict to a natural transformation for the full subcategory of objects in $e_{\mathcal{L}}\mathscrsfs{D}(G)e_{\mathcal{L}}$ supported on the Bruhat cell $BwB$ for every $w \in W$. Conversely, given such natural transformations for every $w$, we can attempt to glue them up to get a natural transformation on the whole bi-Whittaker category; this is what we do for instance in \ref{nt}.

In all the instances in this paper, the natural transformations of the Identity functor on a derived category will have been induced from the Perverse subcategory (for example $\theta: \bm{\Psi}^{\ast} \to \text{Id}$ on $e_{\mathcal{L}}\mathscrsfs{D}(G)e_{\mathcal{L}}$ in \ref{nt} and $s_{w_0}:\alpha_{w_0} \to \text{Id}$ on $\mathscrsfs{D}_W^{0}(T)$  in \ref{r2}). 

However, due to the equivalence $e_{\mathcal{L}}\mathscrsfs{D}(BwB)e_{\mathcal{L}} \xleftrightarrow{m,n} \mathscrsfs{D}(N_{w,\mathcal{L}})$ set up in \ref{dictionary} , any such natural transformation on $e_{\mathcal{L}}\mathscrsfs{D}(BwB)e_{\mathcal{L}}$ will induce a natural transformation on $\mathscrsfs{D}(N_{w,\mathcal{L}})$, which, due to Corollary \ref{const}, is just a choice of constant for every connected component. In particular, our construction in the Theorem \ref{nt}, has all the constants equal to $1$.

Following \ref{constp} however, it should not be possible for any choice of such constants on the Bruhat cells to glue to a global natural transformation. This is because of the presence of possible non-trivial extensions, i.e., sheaves on $\overline{BwB}$ which are non-trivial extensions of sheaves on $BwB$ and $\overline{BwB}\backslash BwB$ (a non-trivial extension in the derived category also gives a non-trivial extension in the perverse subcategory). Again following Corollary \ref{constp}, it is also clear that a natural transformation is uniquely determined from the constants on the Bruhat cells, so showing that the natural transformation induced from the one on $\mathscrsfs{D}_W^{\circ}(T)$ has $c_{w,\beta}=1$ for all pairs $(w,\beta)$ in $e_{\mathcal{L}}\mathscrsfs{D}(G)e_{\mathcal{L}}$ is enough to establish equality with the natural transformation that we have in our Theorem \ref{nt}.

 \end{rmk}

So reiterating, roughly our goals now, as alluded to in Remark \ref{conarg} are as follows: Show that the natural transformation between functors $\bm{\Psi}^{\ast}$ and $\text{Id}$ on $e_{\mathcal{L}}\mathscrsfs{D}(G)e_{\mathcal{L}}$ induced from the natural transformation $\alpha_{w_0}: s_{w_0} \to \text{Id}$ on $\mathscrsfs{D}_W^{\circ}(T)$ has $c_{w,\beta}=1$ for all pairs $(w,\beta)$. This will show the equality with the transformation $\theta$ that we have constructed. Before proving this, one of course needs to show that the image of the functor $\bm{\Psi}^{\ast}: e_{\mathcal{L}}\mathscrsfs{D}(G)e_{\mathcal{L}} \to e_{\mathcal{L}}\mathscrsfs{D}(G)e_{\mathcal{L}}$ under $\zeta$ is $s_{w_0}: \mathscrsfs{D}_{W}^{\circ}(T) \to \mathscrsfs{D}_{W}^{\circ}(T)$.\\
\begin{defn}(Scheme of tame characters.)
Let $\pi_1(T)^t$ be the tame \'etale fundamental group. A tame character is a continuous character $\chi: \pi_1(T)^t \to \overline{\Q_{\ell}}^{\times}$. In \cite{GL}, a $\overline{\Q_{\ell}}$-scheme $\mathcal{C}(T)$ is defined, whose $\overline{\Q_{\ell}}$-points are in bijection with the set of these tame characters. There is decomposition:
\[ \mathcal{C}(T) = \underset{\chi_f \in \mathcal{C}(T)_f}{\bigsqcup} \{ \chi_f\} \times \mathcal{C}(T)_{\ell} \]
into connected components, where $\mathcal{C}(T)_f \subset \mathcal{C}(T)$ is the subset consisting of tame characters of order prime to $\ell$ and $\mathcal{C}(T)_{\ell}$ is the connected component of $\mathcal{C}(T)$ containing the trivial character.
    
\end{defn}

\begin{defn} The Weyl group $W$ acts naturally on $\mathcal{C}(T)$ and for any $\chi \in \mathcal{C}(T)$, we denote
by $W_{\chi}$ the stabilizer of $\chi$ in $W$ and $W_{\chi}^{\circ}\subset W_{\chi}$ , the subgroup of $W_{\chi}$ generated by those reflections
$s_{\alpha}$ such that the pull-back $\check{\alpha}^{\ast} \mathcal{L}_{\chi}$ is isomorphic to the trivial local system, where $\check{\alpha} : \Gm \to T$ is the coroot associated to $\alpha$. \footnote{The notation above is following \cite{bd}, originally in \cite{chen1} and \cite{chen2}, $W_{\chi}\subseteq W_{\chi}^{\prime}$ has been used instead.}
\end{defn}

\begin{defn}(Central Sheaves)\label{central}
For any $\mathcal{F} \in  \mathscrsfs{D}_W(T)$ and $\chi \in \mathcal{C}(T)$, the $W-$equivariant structure on $\mathcal{F}$ together with the natural $W_{\chi}$-equivariant structure on $\mathcal{L}_{\chi}$ (local system associated to $\chi$) give rise to an action of $W_{\chi}$ on the cohomology groups $H^{\ast}_c(T, \mathcal{F}\otimes \mathcal{L}_{\chi})$. In \cite{bd}, the authors define the category $\mathscrsfs{D}^{\circ}_W(T)$  as the full subcategory of $\mathscrsfs{D}_W(T)$ spanned by objects $A\in \mathscrsfs{D}_W(T)$ such that the action of $W_{\chi}^{\circ}$ is trivial. \footnote{This differs from the one given in \cite{chen1}, \cite{chen2} by a sign character.}
\end{defn}

We now recall some properties of the {\em Mellin transform} constructed by the authors in \cite{GL}
\begin{equation}\label{mellindef}
\mathcal{M}_{!}: \mathscrsfs{D}(T) \to D^b_{coh}(\mathcal{C}(T))
\end{equation}
which is a monoidal functor with respect to the compact support convolution structure on $\mathscrsfs{D}(T)$ and the tensor product structure on $ D^b_{coh}(\mathcal{C}(T))$,
with the following relevant properties:
\begin{itemize}
    \item Let $\chi \in \mathcal{C}(T)(\overline{\Q_{\ell}})$ and $i_{\chi}: \{\chi\} \to \mathcal{C}(T)$ be the natural inclusion. We have
    \[ R\Gamma_c(T, \mathcal{F} \otimes \mathcal{L}_{\chi}) \cong i_{\chi}^{\ast}\mathcal{M}_{!}(\mathcal{F}) \]

    \item The Mellin transform restricts to an equivalence:
    \[ \mathcal{M}_{!}: \mathscrsfs{D}(T)_{\text{mon}} \cong D^b_{coh}(\mathcal{C}(T))_f \]
between the full subcategory $\mathscrsfs{D}(T)_{mon}$ of monodromic $\ell$-adic complexes on $T$ and the full subcategory $D^b_{coh}(\mathcal{C}(T))_f$ of coherent complexes on $\mathcal{C}(T)$ with finite support.

\end{itemize}
We now recall the following equivalent version of Proposition 4.2 of \cite{chen2}, suited for our definition of a central complex which differs from \cite{chen1} and \cite{chen2} by a sign character.
\begin{prop}\label{equivprop}
Let $\mathcal{F}\in \mathscrsfs{D}_W(T)$ be a $W$-equivariant complex. The following are equivalent:
\begin{altenumerate}
    \item $\mathcal{F}$ is central, as defined in \ref{central}.
    \item For any $\chi$ in $\mathcal{C}(T)(\overline{\Q_{\ell}})$, the action of $W_{\chi}^{\circ}$ on $i_{\chi}^{\ast}\mathcal{M}_{!}(\mathcal{F})$ is trivial.\\
    In the case $W_{\chi}^{\circ} = W_{\chi}$ for all $\chi \in \mathcal{C}(T)$, then the statements above are equivalent to 
    \item The restriction of the Mellin transfrom $\mathcal{M}_{!} (\mathcal{F}) \in D^b_{\text{coh}}(\mathcal{C}(T))$ to each connected component $\mathcal{C}(T)_{\ell,\chi_f}:= \{\chi_f \} \times \mathcal{C}(T)_{\ell}$ of $\mathcal{C}(T)$ descends to the quotient $\mathcal{C}(T)_{\ell,\chi_f}\backslash\backslash W_{\chi_f}$.
\end{altenumerate}
\end{prop}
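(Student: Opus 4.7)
The plan is to reduce both equivalences to careful bookkeeping with the Mellin transform and its basic properties recalled before the proposition. For (i) $\Leftrightarrow$ (ii), the first listed property of $\mathcal{M}_{!}$ provides a canonical isomorphism
\[R\Gamma_c(T, \mathcal{F}\otimes \mathcal{L}_\chi)\cong i_\chi^{\ast}\mathcal{M}_{!}(\mathcal{F}),\]
functorial in the pair $(\mathcal{F},\chi)$. First I would observe that the $W$-equivariant structure on $\mathcal{F}$ induces a $W$-equivariant structure on $\mathcal{M}_{!}(\mathcal{F})$ for the natural $W$-action on $\mathcal{C}(T)$; this is a formal consequence of the compatibility of the Mellin transform with the conjugation action of $W$ on $T$. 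For a character $\chi$ fixed by $W_\chi$, both sides of the above isomorphism then acquire a $W_\chi$-action, and the isomorphism is $W_\chi$-equivariant. Hence the triviality of the $W_\chi^\circ$-action on the cohomology, which is the definition of centrality, is the same as the triviality of the $W_\chi^\circ$-action on the fiber required in (ii).

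For (ii) $\Leftrightarrow$ (iii) under the additional assumption $W_\chi^\circ=W_\chi$ for all $\chi$, I would reformulate (iii) via the standard descent criterion for finite group actions: a $G$-equivariant coherent complex on a scheme $X$ with a finite group $G$ is the pullback of a complex from the coarse quotient $X/\!\!/G$ if and only if at each point $x\in X$ the isotropy $G_x$ acts trivially on the fiber. In our setting, when we restrict $\mathcal{M}_{!}(\mathcal{F})$ to the connected component $\mathcal{C}(T)_{\ell,\chi_f}$, the ambient group action is by $W_{\chi_f}$; any $\chi\in\mathcal{C}(T)_{\ell,\chi_f}$ has stabilizer in $W$ preserving the component it lies on, hence contained in $W_{\chi_f}$, so the isotropy of $\chi$ in $W_{\chi_f}$ coincides with $W_\chi$. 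Therefore (iii) is equivalent to requiring that for every such $\chi$ the group $W_\chi$ acts trivially on $i_\chi^{\ast}\mathcal{M}_{!}(\mathcal{F})$, which under the hypothesis $W_\chi^\circ=W_\chi$ is exactly (ii).

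The main technical obstacle will be verifying carefully the compatibility of the $W$-equivariant structure on $\mathcal{M}_{!}(\mathcal{F})$ induced from that of $\mathcal{F}$, and showing that the resulting $W_\chi$-action on $i_\chi^{\ast}\mathcal{M}_{!}(\mathcal{F})$ agrees, under the Gabber-Loeser isomorphism, with the tautological $W_\chi$-action on $H^{\ast}_c(T,\mathcal{F}\otimes\mathcal{L}_\chi)$ obtained by combining the equivariant structure on $\mathcal{F}$ with the canonical $W_\chi$-equivariant structure on $\mathcal{L}_\chi$; this requires unpacking the constructions in \cite{GL}. Once this compatibility is in place, the rest is a routine application of descent and the decomposition of $\mathcal{C}(T)$ into connected components: the equivalence (i) $\Leftrightarrow$ (ii) holds without any hypothesis on $W_\chi^\circ$, while the appearance of (iii) reflects precisely the passage from pointwise triviality of the isotropy action on fibers to global descent along the coarse quotient, which is where the assumption $W_\chi^\circ=W_\chi$ is needed.
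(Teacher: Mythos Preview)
The paper does not give its own proof of this proposition: it is introduced as ``the following equivalent version of Proposition 4.2 of \cite{chen1}, suited for our definition of a central complex which differs from \cite{chen1} and \cite{chen2} by a sign character,'' with no further argument. So there is no in-paper proof to compare against; any proof belongs to the cited reference.

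That said, your outline is correct and is essentially what one expects the argument in \cite{chen1} to be. The equivalence (i)$\Leftrightarrow$(ii) is an immediate consequence of the Gabber--Loeser identification $R\Gamma_c(T,\mathcal{F}\otimes\mathcal{L}_\chi)\cong i_\chi^{\ast}\mathcal{M}_{!}(\mathcal{F})$ once one checks the $W_\chi$-equivariance you flag, and that compatibility is indeed routine. For (ii)$\Leftrightarrow$(iii), your descent criterion (``a $G$-equivariant coherent complex descends to $X\backslash\backslash G$ iff every isotropy group acts trivially on the derived fiber'') is correct when $|G|$ is invertible, but it is worth noting that it is not entirely formal for complexes: one uses that $\pi_*^G\pi^{\ast}=\id$ so that $\pi^{\ast}$ is fully faithful, takes the cone $\mathcal{G}$ of $\pi^{\ast}\pi_*^G\mathcal{F}\to\mathcal{F}$, observes $\pi_*^G\mathcal{G}=0$, and then kills the top cohomology sheaf of $\mathcal{G}$ by combining exactness of $(-)^G$ with Nakayama. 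Your identification of the isotropy of $\chi\in\mathcal{C}(T)_{\ell,\chi_f}$ inside $W_{\chi_f}$ with $W_\chi$ is also correct, since the prime-to-$\ell$ part of a character is preserved by $W$. The hypothesis $W_\chi^\circ=W_\chi$ enters exactly where you say, to match the isotropy groups appearing in (ii) with those in the descent criterion.
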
  

We also recall some facts about some specific tame local systems on the torus $T$ as described in \cite{chen1} and \cite{chen2}.
Let $\pi_1(T)_{\ell}$ be the pro-$\ell$ quotient of the tame fundamental group $\pi^t_1(T)$. \newline Let $R_T = \text{Sym}(\pi_1 (T)_{\ell} \otimes_{\Z_{\ell}} \overline{\Q_{\ell}})$ be the symmetric algebra of $\pi_1 (T)_{\ell} \otimes_{\Z_{\ell}} \overline{\Q_{\ell}}$ with $R_{T,+}$ being the argumentation ideal. Define $R_{\chi} = R_T/\langle R_{T,+}^{W_{\chi}^{\circ}} \rangle$ where $\langle R_{T,+}^{W_{\chi}^{\circ}} \rangle$ is the the ideal generated by $R_{T,+}^{W_{\chi}^{\circ}}$, the $W_{\chi}^{\circ}$ invariance of $R_{T,+}$ ($W_{\chi}$ has an action on $R_{T,+}$ through the action of $W$).

Consider the representation $R_{\chi}$ of $\pi_1(T)_{\ell}$, on which an element $\gamma \in \pi_1(T)_{\ell}$ acts as multiplication by $\text{exp}(\gamma)$ and let $\mathcal{E}_{\chi}^{\text{uni}}$ be the corresponding $\ell$-adic local system on $T$. Since $W_{\chi}^{\circ}$ is normal in $W_{\chi}$, we get a $W_{\chi}$-equivariant structure on $\mathcal{E}_{\chi}^{\text{uni}}$ which gives rise to a $W_{\chi}$-equivariant structure on $\mathcal{E}_{\chi} = \mathcal{E}_{\chi}^{\text{uni}} \otimes \mathcal{L}_{\chi}$. Finally, define $\mathcal{E}_{\Theta} = \text{Ind}_{\W_{\chi}}^{W} \mathcal{E}_{\chi}$ ($\Theta$ being the $W$-orbit of $\chi$). The local system $\mathcal{E}_{\Theta}$ has rank $|W||W_{\chi}|/|W_{\chi}^{\circ}|$.
We now recall some examples in \cite{chen1} and make some additional observations:
\begin{altenumerate}
\item If $\chi$ is regular, i.e. $W_{\chi} = e$, then we have $\mathcal{E}_{\chi} = \mathcal{L}_{\chi}$  and $\mathcal{E}_{\Theta}=\text{Ind}_e^{W}\mathcal{L}_{\chi} \cong \oplus_{w\in W} \mathcal{L}_{w\chi}$, hence we get the regular representation of $W$ on the fiber at identity, through the $W$-equivariant structure of $\mathcal{E}_{\Theta}$. 
\item If $\chi$ is the trivial character, then $W_{\chi} = W_{\chi}^{\circ} = W$ and is the local system of rank $|W|$ corresponding to the unipotent representation of $R_{\chi} = R_T/\langle R_{T,+}^{W} \rangle$ of $\pi_1^{t}(T)$. Here, we again get the regular representation of $W$. We expand on this point below with a lemma.\\
But first, we make the observation that it is enough for us to \textbf{prove all of our statements for groups with connected centre}: the main point is to construct non-trivial extensions as outlined in Remark \ref{r1} and to this end, we can simply pull back those which we will construct for the quotient group with connected centre.\\
So, in all of the following, we assume that our connected reductive group $G$ also has connected centre.\\
First, from \cite{DL}, Theorem 5.13, we have that after our reduction to the connected centre case, given $\chi \in \mathcal{C}(T), W_{\chi}^{\circ}=W_{\chi}$.
\end{altenumerate}

\begin{lemma}
Given a character $\chi \in \mathcal{C}(T)$ of the torus $T$, we have that for $W_{\chi}^{\circ} = W_{\chi} \subseteq W$ the map $R_T^{W_{\chi}^{\circ}} \to R_T$ is flat. $R_{T,+}\subset R_T$ is the argumentation ideal and the fibre over the maximal ideal  $R_{T,+}^{W_{\chi}^{\circ}} \subset R_T^{W_{\chi}^{\circ}}$ for the corresponding finite map between schemes is  $R_T/\langle R_{T,+}^{W_{\chi}^{\circ}} \rangle$. This finite dimensional vector space is isomorphic to the regular representation of $W_{\chi}^{\circ}$ under the natural $W_{\chi}^{\circ}$ action.    
\end{lemma}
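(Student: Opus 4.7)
The plan is to deduce this from the classical Shephard--Todd--Chevalley theorem applied to the action of $W_{\chi}^{\circ}$ on the finite-dimensional $\overline{\Q_{\ell}}$-vector space $V := \pi_1(T)_{\ell}\otimes_{\Z_{\ell}}\overline{\Q_{\ell}}$, of which $R_T = \operatorname{Sym}(V)$ is the symmetric algebra. Once $W_{\chi}^{\circ}$ is recognized as a finite reflection group on $V$, all three assertions (flatness, identification of the fibre, and regular representation structure) are consequences of well-known results in invariant theory.

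The first step, which is the only one requiring honest verification, is that $W_{\chi}^{\circ}$ acts on $V$ as a finite complex reflection group. By definition, $W_{\chi}^{\circ}$ is generated by those reflections $s_{\alpha}$ for which $\check{\alpha}^{\ast}\mathcal{L}_{\chi}$ is trivial. Each such $s_{\alpha}$ acts on the cocharacter lattice $X_{\star}(T)$ as a reflection fixing the hyperplane orthogonal to the coroot $\check{\alpha}$, and under the natural identification of $\pi_1(T)_{\ell}$ with $X_{\star}(T)\otimes_{\Z}\Z_{\ell}(1)$ coming from the Kummer sequence, this reflection action transfers to $V$. Thus the image of $W_{\chi}^{\circ}$ in $\GL(V)$ is a finite subgroup generated by (pseudo)reflections.

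The second step invokes Shephard--Todd--Chevalley: since $W_{\chi}^{\circ}$ acts on $V$ as a finite reflection group, the invariant subring $R_T^{W_{\chi}^{\circ}}$ is itself a polynomial $\overline{\Q_{\ell}}$-algebra of Krull dimension $\dim V$, and the inclusion $R_T^{W_{\chi}^{\circ}}\hookrightarrow R_T$ realizes $R_T$ as a free module over $R_T^{W_{\chi}^{\circ}}$ of rank $|W_{\chi}^{\circ}|$. Freeness yields flatness, settling the first assertion. The fibre of the corresponding finite morphism $\Spec R_T \to \Spec R_T^{W_{\chi}^{\circ}}$ over the maximal ideal $R_{T,+}^{W_{\chi}^{\circ}}$ is, by base change, exactly the coinvariant algebra $R_T/\langle R_{T,+}^{W_{\chi}^{\circ}}\rangle$, and the freeness computation pins its dimension at $|W_{\chi}^{\circ}|$.

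Finally, to identify this coinvariant algebra with the regular representation of $W_{\chi}^{\circ}$, I apply Chevalley's theorem on coinvariants: for any finite (pseudo)reflection group acting on a polynomial ring, the coinvariant algebra $R/\langle R_{+}^{W}\rangle$ is isomorphic to the regular representation of $W$ as a $W$-module (see e.g.\ Bourbaki, \emph{Groupes et alg\`ebres de Lie}, Ch.~V). The main (and really only) obstacle is the first step, namely the reflection-group identification; once that is in place, every remaining claim in the lemma is a direct citation from classical invariant theory.
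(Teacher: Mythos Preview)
Your argument is correct, but it differs from the paper's in a meaningful way. You invoke the full strength of the Shephard--Todd--Chevalley package: polynomial invariants give freeness of $R_T$ over $R_T^{W_\chi^\circ}$ immediately, and Chevalley's coinvariant theorem identifies $R_T/\langle R_{T,+}^{W_\chi^\circ}\rangle$ with the regular representation directly. The paper, by contrast, uses only the smoothness of the quotient (also from Chevalley) together with miracle flatness to obtain flatness, then deduces that $R_T$ is locally free over the invariants; for the regular-representation claim it runs a rigidity argument: over the generic locus where $W_\chi^\circ$ acts freely the fibre is visibly the regular representation, and since the character of a family of representations over a connected base is constant, the special fibre over $0$ must also carry the regular representation. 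Your route is shorter and cites the sharper classical statement; the paper's route avoids appealing to the coinvariant theorem as a black box and instead derives the representation type by specialization, at the cost of a slightly longer argument.
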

\begin{proof}
As we have identifications $\pi_1(T)_{\ell} \cong X_{\ast}(T)\otimes_{\Z}\Z_{\ell}(1)$, it is equivalent to look at the action of the Weyl group $W_{\chi}^{\circ}$ on the affine space corresponding to $X_{\ast}(T)$ over $\overline{\Q_{\ell}}$. As the Weyl group is generated by reflections, the quotient space is smooth and via miracle flatness (\cite{mfl}, \href{https://stacks.math.columbia.edu/tag/00R4}{Lemma 00R4}), we can say that the quotient map is flat: both the spaces involved are smooth and the map is finite. We are interested in the $W_{\chi}^{\circ}$ representation of the fibre over $0$, knowing that generically it is the $W_{\chi}^{\circ}$-regular representation (as the action is free).

As a finite flat module is locally free, we have that that $R_T$ is a projective module over $R_T^{W_{\chi}^{\circ}}$. So, we get that the finite group action can be restricted to Zariski open sets in $\text{Spec}(R_T^{W_{\chi}^{\circ}})$ (those over which we get free modules) and studied, giving us various representations of free modules of rank $|W_{\chi}^{\circ}|$. This setup enforces that the representations do not vary because of the fact that characters determine representations uniquely: say over a Zariski open Spec $S\subset \text{Spec}(R_T^{W_{\chi}^{\circ}})$, we consider the induced map $W_{\chi}^{\circ} \to GL(|W_{\chi}^{\circ}|,S)$. The various elements in $S$ (obtained through character relations) are constants in an open set in Spec $S$ (because, generically, the representation is regular due to the free action of $W_{\chi}^{\circ}$) so they are the same constants everywhere in Spec $S$. Therefore, we get the regular representation of $W_{\chi}^{\circ}$ at the fiber over $0$ as well.
\end{proof}

We have, from \cite{chen1}, Corollary 5.2 that $\mathcal{E}_{\Theta}$ is $\ast$-central in the sense of \cite{chen1}, meaning that the action of $W_{\chi}^{\circ}$ on the cohomology groups on $H^{\ast}(T,\mathcal{E}_{\Theta})$ is through the sign representation, so consequently, the action of $W_{\chi}^{\circ}$ on $H^{\ast}_c(T, \mathcal{E}_{\Theta}^{\vee} \otimes \text{sgn})$ is trivial (due to Verdier duality), so $\mathcal{E}_{\Theta}^{\vee} \otimes \text{sgn}$ is central in the sense of \cite{bd}. This allows us to say that $\mathcal{E}_{\Theta}^{\vee} \otimes \text{sgn}$ is an object in $\mathscrsfs{D}_W^{\circ}(T)$.

\begin{prop}\label{ftheta}
Given $\chi \in \mathcal{C}(T)$, the corresponding object $\mathcal{F}_{\Theta}:= \mathcal{E}_{\Theta}^{\vee}\otimes \text{sgn}$ admits a non-trivial morphism $\mathcal{F}_{\Theta} \to \delta_1$ in the category $\mathscrsfs{D}_W^{\circ}(T)$ with the $W$ action on $\delta_1$ being the trivial action.
\end{prop}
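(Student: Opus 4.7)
The plan is to reduce the statement to a purely representation-theoretic question about the fiber of $\mathcal{F}_\Theta$ at the identity $1 \in T$. Since $1$ is $W$-fixed, the inclusion $i_1 : \{1\} \hookrightarrow T$ is $W$-equivariant, and $\delta_1 = i_{1,\ast} \Qlcl$ equipped with the trivial $W$-action lies in $\mathscrsfs{D}_W^\circ(T) \subseteq \mathscrsfs{D}_W(T)$: indeed $H^{\ast}_c(T, \delta_1 \otimes \mathcal{L}_{\chi'}) \cong (\mathcal{L}_{\chi'})_1 \cong \Qlcl$ with trivial $W_{\chi'}$-action under the canonical normalization of the equivariant structure on $\mathcal{L}_{\chi'}$ at the identity. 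The adjunction $(i_1^{\ast}, i_{1,\ast})$ together with the fullness of the subcategory $\mathscrsfs{D}_W^\circ(T) \subseteq \mathscrsfs{D}_W(T)$ then yields an isomorphism
\[ \Hom_{\mathscrsfs{D}_W^\circ(T)}(\mathcal{F}_\Theta, \delta_1) \;\cong\; \Hom_W\bigl((\mathcal{F}_\Theta)_1,\, \Qlcl\bigr), \]
so it will suffice to exhibit a non-zero $W$-equivariant functional on the fiber $(\mathcal{F}_\Theta)_1$.

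The key computation is the explicit identification of $(\mathcal{F}_\Theta)_1$ as a $W$-representation. Since $\mathcal{L}_\chi$ has canonically trivial stalk at the identity, $(\mathcal{E}_\chi)_1 \cong (\mathcal{E}_\chi^{\text{uni}})_1 = R_\chi$ as $W_\chi$-representations; invoking the reduction to groups with connected center (so that $W_\chi^\circ = W_\chi$), the preceding lemma identifies $R_\chi$ with the regular representation $\Qlcl[W_\chi]$. The geometric induction $\Ind_{W_\chi}^W \mathcal{E}_\chi$ is underlain by $\bigoplus_{[w] \in W/W_\chi} (w^{-1})^{\ast} \mathcal{E}_\chi$ with $W$ permuting the summands compatibly with the $W_\chi$-action on each factor, and since $1$ is $W$-fixed, passing to fibers commutes with this construction to give
\[ (\mathcal{E}_\Theta)_1 \;\cong\; \Ind_{W_\chi}^W \Qlcl[W_\chi] \;\cong\; \Qlcl[W], \]
the regular representation of $W$. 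Dualizing and twisting by $\sgn$, using self-duality of the regular representation, then gives $(\mathcal{F}_\Theta)_1 \cong \Qlcl[W] \otimes \sgn$ as a $W$-representation.

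Finally, elementary character theory gives $\Hom_W(\Qlcl[W] \otimes \sgn, \Qlcl) \cong \Hom_W(\Qlcl[W], \sgn)$, which is one-dimensional since $\sgn$ appears in $\Qlcl[W]$ with multiplicity $\dim \sgn = 1$ (alternatively by Frobenius reciprocity). Any non-zero element of this line furnishes the desired non-trivial morphism $\mathcal{F}_\Theta \to \delta_1$ in $\mathscrsfs{D}_W^\circ(T)$. The main subtlety requiring care is keeping the several layers of $W$-equivariance in sync — the equivariant structures arising from the geometric induction, from Verdier duality on $\mathcal{E}_\Theta$, and from the sign twist — so that the abstract representation-theoretic identifications truly come from the geometric $W$-equivariant structures on the sheaves. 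Once this bookkeeping is made precise, the existence and uniqueness (up to scalar) of the morphism is formal.
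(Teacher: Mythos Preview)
Your proposal is correct and follows essentially the same approach as the paper's proof: both reduce via the adjunction $(i_1^{\ast}, i_{1,\ast})$ to computing the fiber at the identity, invoke the preceding lemma to identify $(\mathcal{E}_\Theta)_1$ with the regular representation of $W$, and then use elementary representation theory (the regular representation is self-dual and contains every irreducible, so after the sign twist the trivial representation appears as a quotient) to produce the desired morphism. Your write-up is somewhat more careful than the paper's in making the adjunction and the verification that $\delta_1 \in \mathscrsfs{D}_W^\circ(T)$ explicit, but the argument is the same.
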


\begin{proof}
From the lemma above, it is clear that $\mathcal{E}_{\chi}:= \mathcal{E}_{\chi}^{\text{unip}} \otimes \mathcal{L}_{\chi}$ has the regular representation of $\mathcal{W_{\chi}}$ at the fibre at identity, and so $\mathcal{E}_{\Theta}:= \text{Ind}_{W_{\chi}}^{W} \mathcal{E}_{\chi}$ gets the regular representation of $W$ at identity after the $W$-induction of the regular representation of $W_{\chi}^{\circ}$. Therefore, even the dual representation is regular and gets the sign representation as a quotient.\\
This allows us to have a map $\mathcal{F}_{\Theta} \to \delta_1$ through the adjunction map for the inclusion of identity point $e \to T$ that is equivariant for the $W$-action, with the trivial action on $\delta_1$.\\
\end{proof}

Consider the quotient map $\pi_{\chi}: \mathcal{C}(T) \to \mathcal{C}(T)// W_{\chi}$. Let $0 \in \mathcal{C}(T)(\overline{\Q_{\ell}})$ be the
trivial character and let $D_{\chi} = \pi_{\chi}^{-1}(\pi_{\chi}(0))$ be the scheme theoretic preimage of $\pi_{\chi}(0) \in \mathcal{C}(T )// W_{\chi}$ for the map $\pi_{\chi}$. We have the following coherent sheaves on $\mathcal{C}(T)$: $\mathcal{R}_{\chi}^{uni} = \mathcal{O}_{D_{\chi}}, \mathcal{R}_{\chi} = m_{\chi}^{\ast}(\mathcal{O}_{D_{\chi}}), \mathcal{R}_{\Theta} = \text{Ind}_{W_{\chi}}^W\mathcal{R}_{\chi}$ (Where $m_{\chi}: \mathcal{C}(T) \to \mathcal{C}(T)$ is the morphism for translation by $\chi$).\\
We have, from Proposition 5.1 of \cite{chen1}, that there is an isomorphism $\mathcal{M}_{\ast}(\mathcal{E}_{\Theta}\otimes \text{sign}_W) \cong \mathcal{R}_{\Theta}$. Now, from the equivalent property (iii) of \ref{equivprop} (we have $W_{\chi} = W_{\chi}^{\circ}$ under the connected centre assumption) and the fact that the Mellin transform restricts to an equivalence for monodromic objects, as $\mathcal{R}_{\chi}^{uni}$ is the pull back of a simple object from the GIT quotient, we see that in the $W-$equivariant category $\text{Perv}_W(T),$ $ \mathcal{E}_{\Theta}[\text{dim}T]$ is a simple object and hence, so is $\mathcal{F}_{\Theta}[\text{dim} T]$ in $\text{Perv}_W^{\circ}(T).$  

We now recall corollary 1.7 of \cite{bd},
there is an equivalence of abelian categories:
$$\text{Perv}^{\circ}_W(T) \cong \text{Perv}^{\circ}_G(G) \cong \text{Perv}(e_{\mathcal{L}}\mathscrsfs{D}(G)e_{\mathcal{L}})[\text{dim } U]$$
We refer to loc. cit. for detailed description of the functors (but briefly, the first one is the $W$-equivariant parablolic induction and the second one is given by convolution by $e_{\mathcal{L}}$ and the composition is the restriction of $\zeta^{-1}$ talked about at the start of this section).\\
We now prove a key result, an analogue of \ref{const} for the category $e_{\mathcal{L}}\mathscrsfs{D}(G)e_{\mathcal{L}}$ and therefore also for $\mathscrsfs{D}_W^{\circ}(T)$:
\begin{prop}\label{endid}
All the natural transformations between the identity functors on $e_{\mathcal{L}}\mathscrsfs{D}(G)e_{\mathcal{L}}$ which are induced from the Perverse subcategory $\text{Perv}(e_{\mathcal{L}}\mathscrsfs{D}(G)e_{\mathcal{L}})$ are given by multiplication by a scalar $c\in \overline{\Q_{\ell}}$ and hence the same also holds true for $\mathscrsfs{D}_W^{\circ}(T)$.    
\end{prop}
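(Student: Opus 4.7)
The plan is to use the equivalence $\zeta$ of Theorem \ref{zetathm} to transfer the question to $\mathscrsfs{D}_W^\circ(T)$, and then combine the explicit simple perverse objects $\mathcal{F}_\Theta[\dim T]$ of Proposition \ref{ftheta} with the Mellin transform of Gabber-Loeser. As noted just before Proposition \ref{ftheta}, we may reduce to the case where $G$ has connected center, so that $W_\chi^\circ = W_\chi$ for every $\chi \in \mathcal{C}(T)$.

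Given a natural endomorphism $\pi:\mathrm{id}\to\mathrm{id}$ of $\mathscrsfs{D}_W^\circ(T)$, the first step is to establish scalarity on each simple perverse object. The skyscraper $\delta_1\in\text{Perv}_W^\circ(T)$ (with trivial $W$-action, which is central because its single stalk at $1$ carries only the trivial $W$-action) is simple, so $\pi_{\delta_1}=c\cdot\mathrm{id}$ for some $c\in\overline{\Q_\ell}$. For each $W$-orbit $\Theta$ of tame characters, the simple perverse $\mathcal{F}_\Theta[\dim T]$ similarly forces $\pi$ to act by some scalar $c_\Theta$. Naturality applied to the nontrivial morphism $\mathcal{F}_\Theta\to\delta_1$ of Proposition \ref{ftheta} then yields $c_\Theta=c$ for every $\Theta$.

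To propagate this common scalar to all objects, I would use the Mellin transform. Via the Gabber-Loeser equivalence $\mathcal{M}_!:\mathscrsfs{D}(T)_{\mathrm{mon}}\cong D^b_{coh}(\mathcal{C}(T))_f$ combined with Proposition \ref{equivprop}(iii) (valid under the connected-center hypothesis), the monodromic part of $\mathscrsfs{D}_W^\circ(T)$ is identified with coherent complexes with finite support descending to the GIT quotient $\mathcal{C}(T)\backslash\backslash W$. On the coherent side, the natural endomorphism $\pi$ corresponds to a global section of the structure sheaf (by $\mathcal{O}$-linearity of natural endomorphisms of the identity), and the equalities $c_\Theta=c$ pin this section to the value $c$ at every closed point of $\mathcal{C}(T)\backslash\backslash W$, hence globally to the constant $c$. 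Beilinson's realization equivalence (Lemma \ref{ff3}) together with Jordan-H\"older devissage on $\text{Perv}_W^\circ(T)$ then transfers scalarity from the perverse heart to the full derived category, yielding the conclusion on $\mathscrsfs{D}_W^\circ(T)$ and hence, via $\zeta$, on $e_{\mathcal{L}}\mathscrsfs{D}(G)e_{\mathcal{L}}$.

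The main obstacle is step three: making the identification of natural endomorphisms with global functions on $\mathcal{C}(T)\backslash\backslash W$ completely precise, and handling the interaction between the non-monodromic object $\delta_1$ (which lies outside the Mellin picture) and the monodromic $\mathcal{F}_\Theta$. The crucial point is that the morphisms $\mathcal{F}_\Theta\to\delta_1$ of Proposition \ref{ftheta} are what tie these two regimes together, forcing the scalars on all monodromic simples to agree with the one on $\delta_1$; this rigidity, combined with the coherent-side description of the monodromic central category, is what finally produces the desired scalarity and rules out the possibility of $\pi$ acting by different constants on distinct connected components of $\mathcal{C}(T)\backslash\backslash W$.
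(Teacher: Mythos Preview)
Your proposal shares the key ingredients with the paper's argument---the simple objects $\mathcal{F}_\Theta$, the nontrivial map $\mathcal{F}_\Theta\to\delta_1$, the reduction to connected center, and the Mellin transform---but the overall architecture differs, and your ``step three'' is a genuine gap rather than a technicality.

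The paper does \emph{not} try to prove scalarity directly on $\mathscrsfs{D}_W^\circ(T)$. Instead it works on the $G$ side and uses the Bruhat stratification: by Lemma \ref{const} and the gluing set-up (Remark \ref{r1}), any natural endomorphism of the identity on $e_{\mathcal{L}}\mathscrsfs{D}(G)e_{\mathcal{L}}$ is completely determined by the finite collection of scalars $c_{w,\beta}$ indexed by connected components of the $N_{w,\mathcal{L}}$. So one only has to show $c_{w,\beta}=c$ for each pair $(w,\beta)$. For a fixed $(w,\beta)$, the paper takes the test object $C_{w,\beta}$, transports it to $\mathcal{P}\in\mathscrsfs{D}_W^\circ(T)$, uses the Mellin transform merely as a \emph{detection tool} to find some $\chi$ with $\mathcal{P}\otimes\mathcal{L}_\chi\neq 0$, and then invokes (the analogue of) Proposition 5.3 of \cite{chen1} to produce a nontrivial map $\mathcal{F}_\Theta\to\mathcal{P}$ (up to shift). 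Transporting back gives a nontrivial map $\mathcal{Q}\to C_{w,\beta}$ with $\mathcal{Q}=\zeta^{-1}(\mathcal{F}_\Theta)$ simple; combined with the map $\mathcal{Q}\to e_{\mathcal{L}}[\dim U]$, this forces $c_{w,\beta}=c$.

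Your approach, by contrast, attempts to pass from ``$\pi$ acts by $c$ on every $\mathcal{F}_\Theta$'' to ``$\pi$ acts by $c$ on all of $\mathscrsfs{D}_W^\circ(T)$'' via the coherent picture and Jordan--H\"older. Two problems: first, the assertion that a natural endomorphism of the identity on $D^b_{coh}(\mathcal{C}(T)\backslash\backslash W)_f$ corresponds to a global regular function, and that knowing its value at one closed point per component pins it down, is not correct as stated---a regular function on the affine variety $\mathcal{C}(T)_{\ell,\chi_f}\backslash\backslash W_{\chi_f}$ is not determined by its value at a single point. Second, your Jordan--H\"older d\'evissage requires scalarity on \emph{all} simple objects of $\text{Perv}_W^\circ(T)$, and you have not argued that the $\mathcal{F}_\Theta[\dim T]$ exhaust the simples (nor addressed non-monodromic simples other than $\delta_1$). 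The paper's strategy avoids both issues: the reduction to the finitely many $c_{w,\beta}$ on the $G$ side means one never needs a classification of simples on the $T$ side, only the existence, for each specific $\mathcal{P}$, of \emph{some} $\mathcal{F}_\Theta$ mapping nontrivially to it.
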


\begin{proof}
The equivalence referred to before this proposition gives us $\text{Perv}^{\circ}_W(T) \cong \text{Perv}(e_{\mathcal{L}}\mathscrsfs{D}(G)e_{\mathcal{L}})[\text{dim } U] $ which induces the equivalence of the respective derived categories. So in particular, there is a correspondence between endomorphisms of identity functors induced from the respective perverse subcategories. So in particular, we are in the setup of \ref{r1}, \ref{constp}, \ref{conarg}.

We have that the object $\delta_1$ with trivial $W$-equivariance lies in $\mathscrsfs{D}_W^{\circ}(T)$. A given natural transformation (between the identity functors) will act by multiplication by a scalar on it, lets fix it as $c\in \overline{\Q_{\ell}}$. 

Recall the equivalence set up in \ref{dictionary} between $e_{\mathcal{L}}\mathscrsfs{D}(BwB)e_{\mathcal{L}}$ and $\mathscrsfs{D}(N_{w,\mathcal{L}})$. Let $C_{w,\beta} \in e_{\mathcal{L}}\mathscrsfs{D}(G)e_{\mathcal{L}}$ be the sheaf given under this equivalence by the constant sheaf $\overline{\Q_{\ell}}$ on the connected component $N_{\beta, w, \mathcal{L}} \subseteq N_{w,\mathcal{L}}$, indexed by $\beta \in \pi_0({N_{w,\mathcal{L}}})$. Let us denote the corresponding object under $\zeta$ by $\mathcal{P} \in \mathscrsfs{D}_W^{\circ}(T)$ (refer \ref{zetathm}).\\

From the Mellin transform set up in \cite{GL}, for the non-zero object $\mathcal{P}$, there must be a $\chi\in \mathcal{C}(T)$ such that $\mathcal{P}\otimes \mathcal{L}_{\chi} \neq 0$, as from loc. cit., only the zero object goes to the zero object.
We now consider $\mathcal{F}_{\Theta}$ for $\Theta$ being the $W$- orbit of $\chi^{-1}$ in $\mathcal{C}(T)$ so that the point corresponding to $\chi$ on $\mathcal{C}(T)$ is one of the finitely many points that make up the support of $\mathcal{F}_{\Theta}$. We have from \ref{ftheta}, a map $\mathcal{F}_{\Theta} \to \delta_1$ in the category $\mathscrsfs{D}_W^{\circ}(T)$, along with the fact that $\mathcal{F}_{\Theta}[\text{dim }T]$ is a simple object in $\text{Perv}_W^{\circ}(T)$ and therefore, the endomorphisms of $\mathcal{F}_{\Theta}$ are given by $\overline{\Q_{\ell}}$.\\

Convolving with $\mathcal{P}$, we get $\mathcal{P}\ast \mathcal{F}_{\Theta} \to \mathcal{P}$ a non-trivial map from a monodromic sheaf to $\mathcal{P}$ (non-trivial because of the choice of $\chi$). The analogous version of Proposition 5.3 in \cite{chen1} for $\mathcal{F}_{\Theta}$, tells us that there is a non-trivial map from $\mathcal{F}_{\Theta}$ to $\mathcal{P}$ (upto a shift) in $\mathscrsfs{D}_W^{\circ}(T)$.\\

Now, passing back to $e_{\mathcal{L}}\mathscrsfs{D}(G)e_{\mathcal{L}}$ through $\zeta^{-1}$, let's denote by $\mathcal{Q} \in e_{\mathcal{L}}\mathscrsfs{D}(G)e_{\mathcal{L}}$ the image of $\mathcal{F}_{\Theta}$. We therefore have a map from $\mathcal{Q}$ to $C_{w,\beta}$ upto a shift (as $\mathcal{P}$ was defined to be the image of $C_{w,\beta}$ under $\zeta$). Secondly, we have that the endomorphisms of $\mathcal{Q}\in e_{\mathcal{L}}\mathscrsfs{D}(G)e_{\mathcal{L}}$ are given by $\Qlcl$, because that is true for $\mathcal{F}_{\Theta}$.\\

For the given natural transformation, the condition on $\mathcal{Q}$ makes all the scalars $c_{v,\gamma}$ (see \ref{scalardefn}) for those pairs $(v,\gamma)$ for which $X_{v,\gamma}$ lies in the support of $\mathcal{Q}$, to be equal: If we know the endomorphisms of a certain sheaf to be only scalars $c\in \overline{\Q_{\ell}}$, then for a given natural transformation, that scalar can be determined through restricting the sheaf on suitable strata and knowing the scalar there (Multiplication by $c_{v,\gamma}$ denotes the natural transformation 
on sheaves supported on $UX_{v,\gamma}U$).

The image of $\delta_1$ under $\zeta^{-1}$ is $e_{\mathcal{L}}[\text{dim }U]$. As the support of $e_{\mathcal{L}}$ is $U$ which contains the identity of the group $G$, we have that $c_{id,\beta}=c$. Secondly, there is a non-trivial map $\mathcal{F}_{\Theta} \to \delta_1$ which under $\zeta^{-1}$, gives a non-trivial map $\mathcal{Q}\to e_{\mathcal{L}}[\text{dim } U]$ in $e_{\mathcal{L}}\mathscrsfs{D}(G)e_{\mathcal{L}}$. Therefore, the scalars associated to $\mathcal{Q}$ are all equal to $c$.

The existence of the non-trivial morphism from $\mathcal{Q}$ to $C_{w,\beta}$ (upto a shift), shows the existence of non-trivial extension as talked about in Corollary \ref{constp}, Remark \ref{conarg} which in turn shows that $c_{w,\beta} = c$ (this is a consequence of the gluing setup: a non-trivial extension of sheaves on distinct strata equates the two associated scalars).

As the pair $(w,\beta)$ was chosen arbitrarily, we have that $c_{w,\beta}=c$ for all pairs $(w,\beta)$, and hence, the natural transformation on the whole of $e_{\mathcal{L}}\mathscrsfs{D}(G)e_{\mathcal{L}}$, is expressed by multiplication by the scalar $c$ (as discussed in \ref{constp}, they are determined by their restrictions to Bruhat cells).
\end{proof} 
\begin{prop}
Forgetting the $W$-equivariant structure of $\mathscrsfs{D}_W^{\circ}(T)$, the functor 
$\zeta \circ \bm{\Psi}^{\ast}: e_{\mathcal{L}}\mathscrsfs{D}(G)e_{\mathcal{L}} \to \mathscrsfs{D}(T)$ admits a natural isomorphism to the functor $^{w_0}(e_{U^{-}}\circ {\textunderscore})_{|T}: e_{\mathcal{L}}\mathscrsfs{D}(G)e_{\mathcal{L}} \to \mathscrsfs{D}(T)$. (Here we have abused notation and taken $\zeta$ to be a functor to $\mathscrsfs{D}(T)$, composing with the forgetful functor $\mathscrsfs{D}_W^{\circ}(T) \to \mathscrsfs{D}(T))$. \end{prop}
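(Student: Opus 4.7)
The plan is to unfold the definition of $\zeta$ and directly identify its composite with $\bm{\Psi}^*$. Up to the shift-twist $[-4\dim U](-2\dim U)$, the functor $\zeta$ sends $A$ to $(e_{U^-}*A)|_T$, so I will compute $(e_{U^-}*\bm{\Psi}^*A)|_T$ and match it with ${}^{w_0}\!\bigl((e_{U^-}*A)|_T\bigr)$ naturally in $A$.

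First, from Proposition \ref{psiprop} we have $\bm{\Psi}(U^-)=U^-$, and since $e_{U^-}$ is (a shift of) the constant sheaf on $U^-$, this gives $\bm{\Psi}^*e_{U^-}\cong e_{U^-}$. The anti-homomorphism property of $\bm{\Psi}$ makes $\bm{\Psi}^*$ anti-monoidal on $\mathscrsfs{D}(G)$, so
\[ e_{U^-}*\bm{\Psi}^*A \;\cong\; \bm{\Psi}^*e_{U^-}*\bm{\Psi}^*A \;\cong\; \bm{\Psi}^*(A*e_{U^-}). \]
Now $\bm{\Psi}(T)=T$ and $\bm{\Psi}|_T$ is conjugation by $w_0$, which on the abelian group $T$ is just the $w_0$-action. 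Using $w_0^2=e$, restriction to $T$ commutes with $\bm{\Psi}^*$ in the sense that $(\bm{\Psi}^*F)|_T\cong{}^{w_0}(F|_T)$, and applying this to $F=A*e_{U^-}$ yields $(e_{U^-}*\bm{\Psi}^*A)|_T\cong{}^{w_0}\bigl((A*e_{U^-})|_T\bigr)$.

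The remaining step is to identify $(A*e_{U^-})|_T$ with $(e_{U^-}*A)|_T$, naturally in $A$, which I would do by a change of variables. Writing the two sheaves on $T$ as $q_!\alpha^*A$ and $q_!\beta^*A$, where $q\colon T\times U^-\to T$ is the projection and
\[ \alpha,\beta\colon T\times U^-\to G,\qquad \alpha(t,u)=u^{-1}t,\quad \beta(t,u)=tu^{-1}, \]
the automorphism $\phi(t,u)=(t,t^{-1}ut)$ of $T\times U^-$ (well defined because $T$ normalizes $U^-$) satisfies $q\circ\phi=q$ and $\beta\circ\phi=\alpha$. Hence $q_!\alpha^*A\cong q_!\phi^*\beta^*A\cong q_!\beta^*A$ by proper base change. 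Composing the three isomorphisms gives the desired natural isomorphism $\zeta(\bm{\Psi}^*A)\cong{}^{w_0}\zeta(A)$. The only delicate point I anticipate is verifying that each isomorphism is canonical, and hence natural in $A$, and compatible with the shifts and Tate twists packaged into $\zeta$; this should be routine since every step is either proper base change or comes from the canonical identification $\bm{\Psi}^*e_{U^-}\cong e_{U^-}$ supplied by Proposition \ref{psiprop}.
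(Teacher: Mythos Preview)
Your argument is correct and follows essentially the same route as the paper: both use the anti-monoidal property of $\bm{\Psi}^{\ast}$, the identification $\bm{\Psi}^{\ast}e_{U^{-}}\cong e_{U^{-}}$, and the fact that $\bm{\Psi}|_{T}$ is conjugation by $w_{0}$ to reduce to comparing $(A\ast e_{U^{-}})|_{T}$ with $(e_{U^{-}}\ast A)|_{T}$. The only difference is cosmetic: the paper dispatches this last comparison by noting that both restrictions depend only on $A|_{B^{-}}$ and invoking normality of $U^{-}$ in $B^{-}$, whereas you make the content of that remark explicit via the automorphism $\phi(t,u)=(t,t^{-1}ut)$ of $T\times U^{-}$, which is exactly the change of variables encoding that normality.
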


\begin{proof}
The proposition above claims that there is a natural transformation $\pi$ making the following diagram commute:  
\[\xymatrix{
 e_\mathcal{L}\mathscrsfs{D}(G) e_{\mathcal{L}}\ar[r]^{\bm{\Psi}^{\ast}} \ar[d]^{\zeta} & e_{\mathcal{L}}\mathscrsfs{D}(G)e_{\mathcal{L}} \ar[d]^{\zeta}\\
 \mathscrsfs{D}(T) \ar[r]^{\bm{\Psi}_T^{\ast}} & \mathscrsfs{D}(T)  .
}\] 

This is because in \ref{psiprop}, the homomorphism $\bm{\tau}$ was constructed using the fact that a non-degenerate local system determines pinning on $G$ uniquely along with the underlying diagram homomorphism being $\alpha \to -w_0(\alpha)$. As $\bm{\Psi}:= \bm{\tau} \circ \iota = \iota \circ \bm{\tau}$, where $\iota$ is the inverse map, we see that on the chosen maximal torus $T$, $\bm{\Psi}$ acts exactly by $w_0$ conjugation. Let's denote by $\bm{\Psi}_{T}$ the restriction of $\bm{\Psi}$ to $T$. 

We have that for $A,B \in \mathscrsfs{D}(G), \bm{\Psi}^{\ast}(A \ast B) = (\bm{\Psi}^{\ast}B \ast \bm{\Psi}^{\ast}A)$ because $\bm{\Psi}^{\ast}$ is an anti-involution. Restricting to $T$ we get $\bm{\Psi}_{T}^{\ast}(A \ast B) = (\bm{\Psi}^{\ast}B \ast \bm{\Psi}^{\ast}A)_{|T}$.

Recall, from the start of this section that forgetting the $W$-equivariant structure, the functor $\zeta$ above is described as $A\in e_{\mathcal{L}}\mathscrsfs{D}(G)e_{\mathcal{L}} \mapsto (e_{U^{-}} \ast A)_{|T}[-4 \text{dim} U] (-2 \text{dim} U)$.
As the composition of shift and twist functors (more precisely, $[-4 \text{dim} U] (-2 \text{dim} U)$) appears on both the sides, it is enough for us to describe a natural transformation between the functors before applying these operations.

Now, the relevant functor for the top-right composition is:\\ $A\in e_{\mathcal{L}}\mathscrsfs{D}(G)e_{\mathcal{L}} \mapsto (e_{U^{-}} \ast \bm{\Psi}^{\ast}A)_{|T} \cong \bm{\Psi}^{\ast}_{T}(A \ast \bm{\Psi}^{\ast}e_{U^{-}})_{|T} \cong \bm{\Psi}^{\ast}_{T}(A \ast e_{U^{-}})_{|T}\cong \bm{\Psi}^{\ast}_{T}(e_{U^{-}}\ast A)_{|T}.$ 

We get the first isomorphism through the anti-monoidal and involutive property of $\bm{\Psi}^{\ast}$ along with the fact above. The second isomorphism is due to the fact that $\bm{\Psi}(U^{-}) = U^{-}.$ Finally, since these restrictions to $T$ are determined by the restriction of $A$ to $B^{-}$ (because we are convolving with a sheaf supported on $U^{-}$ followed by restriction to $T$) we have, because of the normality of $U^{-}$ in $B^{-}$ that, we can switch the order of convolution with $e_{U^{-}}$, giving us the third isomorphism.

So we have our result as $\bm{\Psi}_T$ is just conjugation by $w_0$, taking $\pi$ (the required natural transformation) to be the composition of the inverses of the isomorphisms described above.
\end{proof}
 
With the natural transformation set up between the two functors above, we now have the following that sets up the dictionary between the functors and natural transformations discussed for the two categories $e_{\mathcal{L}}\mathscrsfs{D}(G)e_{\mathcal{L}}$ and $\mathscrsfs{D}_W^{\circ}(T)$.

\begin{prop}\label{zetasq}

There is a natural isomorphism making the following diagram commute: 

\[\xymatrix{
 e_\mathcal{L}\mathscrsfs{D}(G) e_{\mathcal{L}}\ar[r]^{\bm{\Psi}^{\ast}} \ar[d]^{\zeta} & e_{\mathcal{L}}\mathscrsfs{D}(G)e_{\mathcal{L}} \ar[d]^{\zeta}\\
 \mathscrsfs{D}_W^{\circ}(T) \ar[r]^{s_{w_0}} & \mathscrsfs{D}_W^{\circ}(T)  .
}\] 

And secondly, the natural transformation $\theta: \bm{\Psi}^{\ast} \to \text{Id}$ goes to the natural transformation $\alpha_{w_0}: s_{w_0}\to \text{Id}$, under the equivalence $\zeta$ ($s_{w_0}$ and $\alpha_{w_0}$ are as defined in \ref{natmaps}).    
\end{prop}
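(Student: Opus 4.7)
The plan is to build on the previous proposition, which already constructs a natural isomorphism $\zeta \circ \bm{\Psi}^{\ast} \cong s_{w_0} \circ \zeta$ at the level of underlying complexes on $T$ (forgetting the $W$-equivariant structure). What remains for the first half of the statement is to verify that this natural isomorphism is in fact compatible with the $W$-equivariant structures on both sides; the second half will then follow by reducing the comparison of $\theta$ with $\alpha_{w_0}$ (transported through $\zeta$) to evaluating both at a single well-chosen object and invoking Theorem \ref{endid}.

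For the $W$-equivariance I would unpack the explicit construction recalled at the start of this section: the transition isomorphisms underlying the $W$-equivariant structure on $\zeta(A)$ arise from the action of the Kazhdan--Laumon sheaves $\mathcal{K}_w$ via the left $\mathscrsfs{Y}$-module structure on $e_{U^{-}}\mathscrsfs{D}(G)e_{\mathcal{L}} \cong \mathscrsfs{D}(T)$. The core computation thus reduces to identifying $\bm{\Psi}^{\ast}\mathcal{K}_w$. Since $\bm{\Psi}$ is an anti-involution preserving the pair $(T, U^{-})$ with $\bm{\Psi}|_T(t) = w_0 t^{-1}w_0^{-1}$ and whose induced action on simple reflections is conjugation by $w_0$, unwinding the inductive definition of $\mathcal{K}_w$ as a convolution of Kazhdan--Laumon sheaves for simple reflections, together with the anti-monoidal nature of $\bm{\Psi}^{\ast}$, yields a canonical identification $\bm{\Psi}^{\ast}\mathcal{K}_w \cong \mathcal{K}_{w_0 w w_0}$. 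Tracing this identification through the construction of the transition morphisms then shows that the underlying isomorphism of the previous proposition is a morphism in $\mathscrsfs{D}_W^{\circ}(T)$. This is precisely where I expect the main obstacle to lie, because it requires careful bookkeeping of how the anti-monoidal structure of $\bm{\Psi}^{\ast}$ interacts with the convolution-based $W$-action, and one must verify that the $w_0$-twist built into $s_{w_0}$ is exactly matched by the conjugation $w \mapsto w_0 w w_0$ produced by $\bm{\Psi}^{\ast}$.

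For the second half of the statement, using the square from the first half, $\alpha_{w_0}\colon s_{w_0}\to \mathrm{Id}$ transports through $\zeta$ to a natural isomorphism $\eta \colon \bm{\Psi}^{\ast} \xrightarrow{\sim} \mathrm{Id}$ on $e_{\mathcal{L}}\mathscrsfs{D}(G)e_{\mathcal{L}}$. We thereby obtain two natural isomorphisms $\theta, \eta \colon \bm{\Psi}^{\ast} \xrightarrow{\sim} \mathrm{Id}$, and the composite $\theta \circ \eta^{-1}$ is a natural automorphism of the identity functor. By Theorem \ref{endid} it equals multiplication by a single scalar $c \in \overline{\Q}_{\ell}^{\times}$, so it suffices to verify $c = 1$ by evaluating on the unit object $e_{\mathcal{L}}$. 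On the one hand, since $e_{\mathcal{L}}$ is supported on $U \subseteq BeB$, Lemma \ref{bruhatnt} tells us that $\theta_{e_{\mathcal{L}}}$ is the identity, as it is defined through the canonical identity on $\mathscrsfs{D}(N_{e,\mathcal{L}})$ under the equivalence of Lemma \ref{dictionary}. On the other hand, $\zeta(e_{\mathcal{L}})$ is the unit of $\mathscrsfs{D}_W^{\circ}(T)$, namely $\delta_1$ with the trivial $W$-equivariant structure, on which $\alpha_{w_0}$ evaluates to the identity because all its transition maps are trivial. Hence $\eta_{e_{\mathcal{L}}} = \theta_{e_{\mathcal{L}}}$, forcing $c = 1$ and $\theta = \eta$.
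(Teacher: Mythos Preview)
Your proposal is correct, and the second half mirrors the paper's argument almost exactly: both form a natural automorphism of the identity functor, invoke Theorem \ref{endid} to reduce it to a scalar, and pin that scalar down as $1$ by evaluating at the unit object $e_{\mathcal{L}}$ (equivalently $\delta_1$). The genuine difference lies in the first half. You plan to verify directly that the underlying isomorphism $\pi$ from the previous proposition is $W$-equivariant, by computing $\bm{\Psi}^{\ast}\mathcal{K}_w$ and tracking through the Kazhdan--Laumon construction; you correctly flag this as the hard step (and note that the anti-monoidal reversal actually gives $\mathcal{K}_{w_0 w^{-1} w_0}$ rather than $\mathcal{K}_{w_0 w w_0}$, since $\bm{\Psi}$ acts on $W$ as $w\mapsto w_0 w^{-1} w_0$). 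The paper sidesteps this computation entirely: rather than check $W$-equivariance of $\pi$ a priori, it \emph{transports} the $W$-structure along $\pi$ (so $\pi$ is tautologically equivariant), forms the single three-term composition $\alpha_{w_0}\circ\zeta(\theta)\circ\pi$, and shows via \ref{endid} and the same evaluation at $\delta_1$ that this composition is the identity; both assertions of the proposition then follow at once from $\alpha_{w_0}^2=\mathrm{Id}$. Your route is more explicit and would yield finer control over how $\bm{\Psi}^{\ast}$ interacts with the Kazhdan--Laumon sheaves, at the price of the bookkeeping you anticipate; the paper's route is shorter and cleverly reuses $\theta$ itself to certify the $W$-compatibility, folding the two halves of the statement into one application of \ref{endid}.
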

\begin{proof}
The image of the map $\theta: \bm{\Psi}^{\ast}\mathcal{F} \to \mathcal{F}$ (given by the natural transformation) under $\zeta$, after appropriately shifting and twisting gives us the map:  $(e_{U^{-}}\ast\bm{\Psi}^{\ast}\mathcal{F})_{|T} \to (e_{U^{-}}\ast \mathcal{F})_{|T}$. This map is in fact $W$-equivariant, respecting the $W$-equivariant structures which these two sheaves get through the action of the Kazhdan-Laumon sheaves described at the start of this section (refer \cite{bd} for more details).\\
However, the previous proposition allows us to impart $^{w_0}(e_{U^{-}}\ast \mathcal{F})_{|T}$ with a $W$-equivariant structure, borrowing the one from $(e_{U^{-}}\ast\bm{\Psi}^{\ast}\mathcal{F})_{|T}$, using $\pi$ as described.\\
On the other hand, in \ref{natmaps}, we have defined some natural transformations $\alpha_{w}$ on the category $\mathscrsfs{D}_W^{\circ}(T)$, in particular, we have that the natural transformation $\alpha_{w_0}$ will induce the map $(e_{U^{-}}\ast\mathcal{F})_{|T} \to {^{w_0}}(e_{U^{-}}\ast\mathcal{F})_{|T}$ coming from the action of the Kazhdan-Laumon sheaf $\mathcal{K}_{w_0}$.
So we get the following sequence:\\
\[^{w_0}(e_{U^{-}}\ast\mathcal{F})_{|T} \xrightarrow[]{\pi} (e_{U^{-}}\ast\bm{\Psi}^{\ast}\mathcal{F})_{|T} \xrightarrow[]{\theta} (e_{U^{-}}\ast \mathcal{F})_{|T} \xrightarrow[]{\alpha_{w_0}} {^{w_0}}(e_{U^{-}}\ast \mathcal{F})_{|T}\]
\\
All the maps are $W$-equivariant and so, after conjugating the composition of the above by $w_0$, we obtain a natural transformation of the identity functor on $\mathscrsfs{D}_W^{\circ}(T).$\\
Now, appealing to the result in proposition \ref{endid}, we have that the composition is given by a scalar $c$, which will be the one describing the endomorphism of the object $\delta_1$ with trivial $W$-equivariance in $\mathscrsfs{D}_W^{\circ}(T).$\\
However, since $\delta_1$, under $\zeta^{-1}$ gives the object $e_{\mathcal{L}}[\text{dim }U],$ and because $\bm{\Psi}^{\ast}e_{\mathcal{L}} \to e_{\mathcal{L}}$ is the canonical morphism, it is easy to see that the above composition for $\delta_1$ is in fact given by multiplication by $1$.\\
This, in turn, enforces that the composition of the sequence above is in fact identity.
It therefore follows that $s_{w_0}$ is the functor corresponding to $\bm{\Psi}^{\ast}$ and $\alpha_{w_0}$ is the image of $\theta$ (due to the fact $\alpha_{w_0} \circ \alpha_{w_0} = \text {Id})$.   
\end{proof}

\bibliographystyle{alpha}
\bibliography{main}

\providecommand{\MR}[1]{}
\begin{thebibliography}{{Sta}24}

\bibitem[Ach21]{ac}
Pramod~N. Achar.
\newblock {\em Perverse sheaves and applications to representation theory}, volume 258 of {\em Mathematical Surveys and Monographs}.
\newblock American Mathematical Society, Providence, RI, [2021] \copyright 2021.

\bibitem[BD14]{bdr}
Mitya Boyarchenko and Vladimir Drinfeld.
\newblock Character sheaves on unipotent groups in positive characteristic: foundations.
\newblock {\em Selecta Math. (N.S.)}, 20(1):125--235, 2014.

\bibitem[BD23]{bd}
Roman Bezrukavnikov and Tanmay Deshpande.
\newblock Vanishing sheaves and the geometric whittaker model, 2023.

\bibitem[Bei87]{beil2}
A.~A. Beilinson.
\newblock {\em On the derived category of perverse sheaves}, pages 27--41.
\newblock Springer Berlin Heidelberg, Berlin, Heidelberg, 1987.

\bibitem[BL03]{bla}
M.~Brion and V.~Lakshmibai.
\newblock A geometric approach to standard monomial theory.
\newblock {\em Represent. Theory}, 7:651--680, 2003.

\bibitem[Boy10]{bo}
Mitya Boyarchenko.
\newblock Characters of unipotent groups over finite fields.
\newblock {\em Selecta Math. (N.S.)}, 16(4):857--933, 2010.

\bibitem[Bri05]{mb}
Michel Brion.
\newblock {\em Lectures on the Geometry of Flag Varieties}, pages 33--85.
\newblock Birkh{\"a}user Basel, Basel, 2005.

\bibitem[Car93]{car}
Roger~W. Carter.
\newblock {\em Finite groups of {L}ie type}.
\newblock Wiley Classics Library. John Wiley \& Sons, Ltd., Chichester, 1993.
\newblock Conjugacy classes and complex characters, Reprint of the 1985 original, A Wiley-Interscience Publication.

\bibitem[Che22a]{chen1}
Tsao-Hsien Chen.
\newblock On a conjecture of {B}raverman-{K}azhdan.
\newblock {\em J. Amer. Math. Soc.}, 35(4):1171--1214, 2022.

\bibitem[Che22b]{chen2}
Tsao-Hsien Chen.
\newblock A vanishing conjecture: the {${\rm GL}_n$} case.
\newblock {\em Selecta Math. (N.S.)}, 28(1):Paper No. 13, 28, 2022.

\bibitem[Con14]{bc}
Brian Conrad.
\newblock Reductive group schemes.
\newblock 2014.

\bibitem[Des16]{td}
Tanmay Deshpande.
\newblock Minimal idempotents on solvable groups.
\newblock {\em Selecta Math. (N.S.)}, 22(3):1613--1661, 2016.

\bibitem[DL76]{DL}
P.~Deligne and G.~Lusztig.
\newblock Representations of reductive groups over finite fields.
\newblock {\em Annals of Mathematics}, 103(1):103--161, 1976.

\bibitem[GL96]{GL}
Ofer Gabber and Fran\c{c}ois Loeser.
\newblock {Faisceaux pervers {$l$}-adiques sur un tore}.
\newblock {\em Duke Math. J.}, 83(3):501--606, 1996.

\bibitem[KL88]{kl}
D.~Kazhdan and G.~Laumon.
\newblock Gluing of perverse sheaves and discrete series representation.
\newblock {\em J. Geom. Phys.}, 5(1):63--120, 1988.

\bibitem[LO08]{lo}
Yves Laszlo and Martin Olsson.
\newblock The six operations for sheaves on {A}rtin stacks. {II}. {A}dic coefficients.
\newblock {\em Publ. Math. Inst. Hautes \'{E}tudes Sci.}, (107):169--210, 2008.

\bibitem[Mor18]{sm}
Sophie Morel.
\newblock {Notes on Beilinson's construction of nearby cycles and gluing}.
\newblock 2018.

\bibitem[MS87]{MS}
V.~B. Mehta and V.~Srinivas.
\newblock Normality of schubert varieties.
\newblock {\em American Journal of Mathematics}, 109(5):987--989, 1987.

\bibitem[Rei10]{rr}
Ryan~Cohen Reich.
\newblock Notes on beilinson's "how to glue perverse sheaves".
\newblock {\em arXiv: Algebraic Geometry}, 2010.

\bibitem[Ses87]{cs}
C.~S. Seshadri.
\newblock Line bundles on {S}chubert varieties.
\newblock In {\em Vector bundles on algebraic varieties ({B}ombay, 1984)}, volume~11 of {\em Tata Inst. Fund. Res. Stud. Math.}, pages 499--528. Tata Inst. Fund. Res., Bombay, 1987.

\bibitem[{Sta}24]{mfl}
The {Stacks project authors}.
\newblock The stacks project.
\newblock \url{https://stacks.math.columbia.edu}, 2024.

\end{thebibliography}
\end{document}